\newcommand{\rs}[1]{{\mbox{\scriptsize \sc #1}}}
\newcommand{\sr}[1]{{\cal #1}}
 \newcommand{\dd}[1]{\mathbb{#1}}
 \newcommand{\ray}{{\rm r}}
\newcommand{\eq}[1]{(\ref{eq:#1})}
\newcommand{\eqn}[1]{(\ref{eqn:#1})}
\newcommand{\lem}[1]{Lemma~\ref{lem:#1}}
\newcommand{\thr}[1]{Theorem~\ref{thr:#1}}
\theoremstyle{definition}
\newcommand{\fig}[1]{Figure~\ref{fig:#1}}
\newcommand{\app}[1]{Appendix~\ref{app:#1}}
\newcommand{\sectn}[1]{Section~\ref{sect:#1}}
\newcommand{\lemt}[1]{\ref{lem:#1}}
\newcommand{\thrt}[1]{\ref{thr:#1}}
\newcommand{\appt}[1]{\ref{app:#1}}
\newcommand{\br}[1]{\left\langle #1 \right\rangle}
\newcommand{\brb}[1]{\big\langle #1 \big\rangle}
\newcommand{\ol}{\overline}
\newcommand{\pend}{\hfill \thicklines \framebox(6.6,6.6)[l]{}}
\newenvironment{proof*}[1]{\noindent {\sc  #1} \rm}{\pend}
\newtheorem{theorem}{Theorem}[section]
\newtheorem{lemma}{Lemma}[section]
\newtheorem{remark}{Remark}[section]
\newtheorem{corollary}{Corollary}[section]
 \newcommand{\setnewcounter} {
 \setcounter{subsection}{0}
 \setcounter{equation}{0}
 \setcounter{conjecture}{0}
 \setcounter{assumption}{0}
 \setcounter{question}{0}
 \setcounter{definition}{0}
 \setcounter{theorem}{0}
 \setcounter{corollary}{0}
 \setcounter{lemma}{0}
 \setcounter{proposition}{0}
 \setcounter{remark}{0}
}
\newenvironment{mylist}[1]{\begin{list}{}
{\setlength{\itemindent}{#1mm}}
{\setlength{\itemsep}{0ex plus 0.2ex}}
{\setlength{\parsep}{0.5ex plus 0.2ex}}
{\setlength{\labelwidth}{10mm}}
}{\end{list}}
\begin{document}

 \title{\bf  Stationary distribution of a two-dimensional SRBM:
   geometric views and boundary measures} 

\author{J. G. Dai\footnote{School of Operations Research and
    Information Engineering, Cornell University, 136 Hoy Road, Ithaca,
    NY 14853; jim.dai@cornell.edu; on leave from
    H. Milton Stewart School of Industrial and 
    Systems Engineering, Georgia Institute of Technology} { and} Masakiyo
  Miyazawa\footnote{Department of Information Sciences, Tokyo University of Science, Noda, Chiba 278-0017, Japan; miyazawa@is.noda.tus.ac.jp}}
\date{\normalsize November 2, 2012, Revised}

\maketitle

\begin{abstract}
  We present three sets of results for the stationary distribution of
  a two-dimensional semimartingale reflecting Brownian motion (SRBM)
  that lives in the nonnegative quadrant.  The SRBM data can
  equivalently be specified by three geometric objects,
 an ellipse and two lines, in the two-dimensional Euclidean
  space.  First, we revisit the variational problem (VP) associated with the
  SRBM. Building on Avram, Dai and Hasenbein (2001), we show that the
  value of the VP at a point in the quadrant is equal
  to the optimal value of a linear function over a convex
  domain. Depending on the location of the point, the convex domain is
  either ${\cal D}^{(1)}$ or ${\cal D}^{(2)}$ or ${\cal D}^{(1)}\cap
  {\cal D}^{(2)}$, where each ${\cal D}^{(i)}$, $i=1, 2$,
  can easily be described by the three geometric objects. Our results
  provide a geometric interpretation for the value function of the
  VP and allow one to see geometrically when one edge of the
  quadrant has influence on the optimal path traveling from the origin
  to a destination point. 
 Second, we provide a geometric condition
  that characterizes the existence of a product form stationary
  distribution. Third, we establish exact tail asymptotics
  of two boundary measures that are associated with the stationary
  distribution; a key step in our proof is to sharpen two
  asymptotic inversion lemmas in Dai and Miyazawa (2011) that allow
  one to
  infer the exact tail asymptotic of a boundary measure from the
  singularity of its moment 
  generating function.

\end{abstract}

\section{Introduction}
\label{sect:}
Multidimensional semimartingale reflecting Brownian motions (SRBMs)
have been extensively studied in literature. They 
serve as diffusion
models of multiclass queueing networks. See \cite{Williams95} for a
survey on SRBMs and 
\cite{Williams96} on diffusion approximations. In this paper, we focus
on a two-dimensional SRBM 
 $Z=\{Z(t); t \ge 0\}$ that lives on  the state space
 $\dd{R}_{+}^{2}$.  We adopt the definition of an SRBM used  in
 \citet{DaiMiyazawa2011a} 
and  follow the notational system there. In particular, the data for the SRBM
 consists of  a covariance matrix
$\Sigma$,  a drift vector $\mu$, and a reflection matrix $R$. They are
written as  
\begin{eqnarray*}
 \mu = \left(\begin{array}{cc} \mu_{1} \\ \mu_{2} \end{array}\right), \qquad \Sigma = \left(\begin{array}{cc} \Sigma_{11} & \Sigma_{12} \\ \Sigma_{21} & \Sigma_{22} \end{array}\right), \qquad R = \left(\begin{array}{cc} r_{11} & r_{12} \\ r_{21} & r_{22} \end{array}\right).
\end{eqnarray*}
In short, $Z$ is said to be a $(\Sigma, \mu, R)$-SRBM on $\dd{R}^2_+$.
The covariance matrix $\Sigma$ and drift vector $\mu$ uniquely
determine
the following ellipse passing through the origin in $\dd{R}^2$:
\begin{eqnarray*}
  \frac 12 \br{\theta, \Sigma \theta} + \br{\mu, \theta} = 0, \qquad \theta \in \dd{R}^{2},
\end{eqnarray*}
where $\br{x,y}$ denotes the usual inner product for vectors $x, y \in
\dd{R}^{2}$ (see its definition at the end of this section). 
 The first column $R^1$ of the reflection matrix $R$ determines a line,
called line 1 in this paper,  that
passes through the origin and is orthogonal to direction $R^1$. In
addition to the origin, the line intersects the ellipse at point
$\theta^{(2,\rm r)}\neq 0$. Similarly, the second column $R^2$ of $R$ defines
line 2 with intersection $\theta^{(1,\rm r)}\neq 0$ on the ellipse.
  These three geometric objects, the ellipse and two lines,
uniquely determine the SRBM data $(\Sigma, \mu, R)$. See \fig{ellipse-ray-stable} for an example of these three geometric
objects and \sectn{Geometrical view} for a precise
 definition of these objects.  All results in this paper
will have a geometric interpretation in 
terms of these three objects.

Throughout this paper, we assume conditions (\ref{eqn:P matrix}) and
(\ref{eqn:stability 1}) in Section~\ref{sect:SRBM and LDP} on the SRBM
data. These two conditions are necessary and sufficient for a
two-dimensional SRBM to have a unique stationary
distribution. \sectn{SRBM data} will give a geometric interpretation 
of these 
two conditions. In this paper, we present three sets of results for a
two-dimensional SRBM. We first give a geometric interpretation for the
value function of the variational problem that is associated with the
SRBM. The value function has been conjectured to be
 the large deviations rate function of the stationary distribution.
See inequalities (\ref{eqn:LDP 1}), (\ref{eqn:LDP 2}), and the text immediately
following these inequalities for discussion on large deviations
rate functions.
 We next give
a geometric condition for the SRBM to have a product form stationary
distribution.  We finally present exact tail asymptotics of two
boundary measures that are associated with the stationary distribution.

Closely related to the first set of our results,
\citet{AvramDaiHasenbein01} studied the variational problem that is
associated with a multidimensional SRBM.  In the two-dimensional case,
they obtained explicit expressions for the value function $I(v)$ for each
point $v$ in the state space $\dd{R}^2_+$. In particular, they proved
that $I(v)=\min(I^{(1)}(v), I^{(2)}(v))$, where $I^{(i)}(v)$ is the
value function of a constrained variational problem with constrained path
being restricted not to touch boundary 
\begin{displaymath}
F_i=\{x\in \dd{R}^2_+:
x_i=0\}  
\end{displaymath}
 of the state space, $i=1,2$. To compute $I^{(i)}(v)$, they
showed that an optimal path from the origin to $v$ is either a straight
line through the interior of the state space or a broken path whose
first segment travels on boundary $F_{3-i}$. In either case, they give
an explicit algebraic expression for $I^{(i)}(v)$. Central in their
analysis is a notion of ``exit velocity'' that dictates the direction
for the broken path to follow when it exits the boundary $F_{3-i}$. Theorem
\ref{thr:geometric view 1} and Lemma~\ref{lem:geometric} of this paper
give a geometric interpretation of $I^{(i)}(v)$. In particular, we
show that the exit velocity $\tilde a^{1}$ on boundary $F_1$
and the exit velocity $\tilde a^{2}$ on boundary $F_2$
have
the following expressions
\begin{equation}
  \label{eq:exitVelocity}
  \tilde a^{1} = n^\Gamma(\tilde \theta^{(2, r)})
\quad \text{ and } \quad 
  \tilde a^{2} = n^\Gamma(\tilde \theta^{(1, r)}),
\end{equation}
respectively, where $n^\Gamma(\theta)$, defined in (\ref{eqn:orthogonal
  to theta}) in Section~\ref{sect:Geometrical view}, is the normal
direction of the ellipse at point $\theta$ on the ellipse and $\tilde
\theta^{(i,r)}$ is the ``symmetry'' of $\theta^{(i,\rm r)}$ on the
ellipse; see \fig{ellipse-ray-normal} for an illustration
of $n^\Gamma(\theta)$ and \fig{SixPoints} for an
illustration of symmetry points. \thr{domain 1} gives a geometric
interpretation of $I(v)$. 
Harrison and Hasenbein \cite{HarrisonHasenbein09} re-express the
solution in \cite{AvramDaiHasenbein01} for $I(v)$ in an alternative form
that is somewhat more explicit, and offer further elaboration on the
structure of that solution.
  
A key step in our analysis is to connect the variational problem with the
optimization problems of a linear function over certain convex
domains. Those convex domains are closely related to the convergence
domain of the two-dimensional moment generating function of the
stationary distribution.  \citet{DaiMiyazawa2011a} studied the exact
tail asymptotics of the stationary marginal distribution for the two
dimensional SRBM.  In particular, they characterized the convergence
domain through the three geometric objects. In this paper, we show that the
convergence domain is the intersection of two convex domains and the
rate function $I(v)$ can be uniquely determined by these two convex
domains. Each of these convex domains can easily be determined by the three
geometric objects. In this way, we provide a geometric interpretation
of the large deviations rate function $I(v)$ (see Theorems \thrt{geometric view 1} and \thrt{domain 1}).

Our second set of results is to characterize geometrically the
existence of the product form stationary distribution. We prove in 
\ref{thm:productform} that the stationary distribution of a
two-dimensional SRBM has
a product form if and only if 
\begin{displaymath}
\tilde  \theta^{(1,\rm r)} = \tilde \theta^{(2,\rm r)}.
\end{displaymath}
See \fig{Product form0} for examples. This geometric
characterization contrasts with the algebraic characterization
developed in Harrison and Williams \cite{HarrisonWilliams87b}. Their
characterization, in terms of a skew symmetry condition, is valid for an
SRBM in an arbitrary dimension.

Our last set of results (\thr{boundary measure asymptotics}) concerns the exact tail asymptotics for two 
boundary measures $\nu_1$ and $\nu_2$ that are associated with the
stationary distribution. This set of results is closely related to 
the recent paper \cite{DaiMiyazawa2011a} in which moment generating
functions are used to obtain the exact tail asymptotic for stationary
marginal distributions. Their analysis relies on two asymptotic inversion
lemmas to obtain the exact tail asymptotics from the singularity
 of the moment generating functions. Unfortunately, these
inversion lemmas are not sharp enough for us to study exact tail
asymptotics for boundary measures. In this paper, we develop a series
of three lemmas, Lemmas \ref{lem:tail equivalence 1}, \ref{lem:handy
  1} and \ref{lem:handy 2}, to sharpen the asymptotic inversion
lemmas. These lemmas should  also be useful for other problems; see
\sectn{Exact asymptotics} for more discussions. 

This paper consists of six sections. In \sectn{SRBM and LDP}, we
discuss the variational problem of an SRBM and its relation to the
large deviations rate function for the stationary distribution. In
\sectn{Geometrical view}, we first define three geometric objects and two
convex domains ${\cal D}^{(1)}$ and ${\cal D}^{(2)}$. We then state the
first set of results, Theorems \thrt{geometric view 1} and \thrt{domain 1}.  In \sectn{ADH characterization}, we review some
results in \citet{AvramDaiHasenbein01} and prove \thr{geometric view 1}. In \sectn{Product form}, we give a
geometric characterization for the existence of a product form
stationary distribution. In Section \ref{sect:Exact asymptotics}, we
study exact tail asymptotics of two boundary measures. In the
  rest of this section, we summarize the notation used in this paper
  for the convenience  of readers.

\noindent {\bf {Notation}:}  Let $\dd{R}$ and $\dd{C}$ be the sets of
all real and complex numbers, respectively, and let $\dd{R}_{+}$ be
the set of all nonnegative real numbers. All vectors are envisioned as
column vectors, and $x^{\rs{t}}$ is the transpose of vector $x$. In
this paper, we adopt the standard inner product, that is, 
\begin{eqnarray*}
  \br{x,y} = x^{\rs{t}} y \quad \text{ for } x, y \in \dd{R}^{2}
\end{eqnarray*}
with the corresponding standard norm $\| x \| = \sqrt{\br{x,x}}$ for
each $x\in \dd{R}^2$.  We list
the notation for the 
primitive data and the stationary distribution of SRBM in Tables
\ref{tab:primitive}  and \ref{tab:sd} below.  
\begin{table}[htdp]
\caption{Notation associated with the primitive data ($i = 1,2$)}
\label{tab:primitive}
\begin{center}
\begin{tabular}{ll|ll}
  $\mu$ & drift vector of SRBM \hspace{5ex} & $\Sigma$ & covariance matrix of SRBM\\
  $R$ & reflection matrix & $R^{i}$ & the $i$-th column of $R$\\
  $\gamma(\theta)$ & $ - \frac 12 \br{\theta, \Sigma \theta} -
  \br{\mu, \theta}$  for $\theta\in \dd{R}^2$  & $\gamma_{i}(\theta)$
  & $ \br{R^{i}, \theta}$  for $\theta\in \dd{R}^2$\\
  $\Gamma$ & $ \{ \theta \in \dd{R}^{2}; \gamma(\theta) > 0\}$ & $\Gamma_{i}$ & $ \{ \theta \in \Gamma; \gamma_{3-i}(\theta) < 0\}$\\
  $\partial \Gamma$ & $ \{ \theta \in \dd{R}^{2}; \gamma(\theta) = 0\}$ & $\partial \Gamma_{i}$ & $ \{ \theta \in \Gamma; \gamma_{3-i}(\theta) = 0\}$\\
 $\Gamma_{\max}$ & $ \{\theta \in \dd{R}^{2}; \exists\,\theta' \in
 \Gamma, \theta < \theta'\}$ & $\theta^{(i,\max)}$ & $ \arg \max_{\theta \in \partial \Gamma} \theta_{i}$\\
  $\theta^{(i,\ray)}$ & the nonzero intersection & $\tilde{\theta}^{(i,\ray)}$ & $ \theta^{(i,\max)}$ if $\theta^{(i,\ray)} = \theta^{(i,\max)}$,\\
  &  of $\gamma(\theta) =0$ and $\gamma_{3-i}(\theta) = 0$ & & otherwise, $\theta$ on $\partial \Gamma$ such that \\
 & & & $\theta_{i} = \theta^{(i,\ray)}_{i}$ and $\theta_{3-i} \ne \theta^{(i,\ray)}_{3-i}$\\
 $\theta^{(i,\Gamma)}$ & $ \arg \max_{\theta \in \partial \Gamma_{i}} \theta_{i}$ & $\theta^{(v,\Gamma)}$ & $ \arg \max_{\theta \in \partial \Gamma} \br{v, \theta_{i}}$ for $v \in \dd{R}_{+}^{2}$\\
  $p^{(i)}$ & the $i$-th row of $(\det{R}) R^{-1}$ & ${\rm n}^{\Gamma}(\theta)$ & $ \Sigma \theta + \mu$ \\
  $a^{i}$ & $ {\rm n}^{\Gamma}(\theta^{(3-i, \ray)})$ & $\tilde{a}^{i}$ & $ {\rm n}^{\Gamma}(\tilde{\theta}^{(3-i, \ray)})$
   \end{tabular}
\end{center}
\label{notation 1}
\end{table}%

Let $Z(\infty)$ be a random variable following the stationary
distribution $\pi$ of the SRBM, and let $\nu_{i}$ be the measure on
the boundary face $F_{3-i} \equiv \{ x \in \dd{R}_{+}^{2}; x_{3-i} =
0\}$ associated with $\pi$ as defined in Section 2 of \cite{DaiMiyazawa2011a}. 

\begin{table}[htdp]
\caption{Notation associated with the stationary distribution of SRBM $(i=1,2)$}
\label{tab:sd}
\begin{center}
\begin{tabular}{ll|ll}
  $\varphi(\theta)$ & $ \dd{E}(e^{\br{\theta, Z(\infty)}})$ &
  $\varphi_{i}(\theta_{3-i})$ & the moment generating function of $\nu_{i}$\\
  $\sr{D}$ & the interior of $\{\theta \in \dd{R}^{2}; \varphi(\theta)
  < \infty\}$ & $\sr{D}^{(i)}$ & $ \{\theta \in \dd{R}^2; \exists\,
  \theta'\in \Gamma, \theta<\theta', \theta'_{i} < \theta_{i}^{(i,\Gamma)}\}$\\
  $\tau_{i}$ & $ \sup\{ \theta_{i} > 0; \theta \in \sr{D} \}$ & ${\rm e}^{(i)}$ & the unit vector for the $i$-th axis\\
  ${\rm e}^{i}$ & $ {\rm e}^{(3-i)}$ & ${\rm n}^{i}$ & $ {\rm e}^{(i)}$\\
  $\zeta$ & the joint density of $\pi$ & $\zeta_{i}$ & the marginal
  density of $\pi$\\
  VP & variational problem & $I(v)$ & the value function of  VP
\end{tabular}
\end{center}
\label{notation 2}
\end{table}%
\vspace{-2ex}

\section{Variational problem and large deviations principle}
\label{sect:SRBM and LDP}
\setnewcounter

In Section \ref{sect:ADH characterization}, we recover results
obtained by \citet{AvramDaiHasenbein01}, leveraging recent results of
\citet{DaiMiyazawa2011a}. Those two papers use different sets of
notation.  For example, \citet{AvramDaiHasenbein01} used $\Gamma$ for
covariance matrix and $\theta$ for drift vector instead of $\Sigma$ and
$\mu$ in this 
paper. Furthermore, $R$ in \cite{AvramDaiHasenbein01} is normalized in
such a way that $r_{11} = r_{22} = 1$. Following
\cite{DaiMiyazawa2011a}, we will not use any normalization in this
paper. In \citet{AvramDaiHasenbein01}, the inner product $\br{x,y}$
for vectors $x, y \in \dd{R}^{2}$ is defined as $x^{\rs{t}}
\Sigma^{-1} y$. Their inner product
certainly simplifies computation, but is also potentially confusing
because there is no indication of $\Sigma^{-1}$.

As in \cite{DaiMiyazawa2011a},  we assume that $\Sigma$ is non-singular
and the SRBM data $(\Sigma, \mu, R)$ satisfy
\begin{eqnarray}
\label{eqn:P matrix}
 && r_{11} > 0, \quad r_{22} > 0, \quad r_{11} r_{22} - r_{12} r_{21} > 0,\\
\label{eqn:stability 1}
 && r_{22} \mu_{1} - r_{12} \mu_{2} < 0, \quad r_{11} \mu_{2} - r_{21} \mu_{1} < 0.
\end{eqnarray} 
Under conditions (\ref{eqn:P matrix}) and (\ref{eqn:stability 1}), the
$(\Sigma, \mu, R)$-SRBM is well defined, and its stationary
distribution exists and is unique; see, for example,
\cite{HarrisonHasenbein09,HobsonRogers93}.

  We are going to introduce the variational problem associated with the
  SRBM shortly. For that, we first define the Skorohod problem
  associated with the reflection matrix $R$.  Let
  $\dd{D}^2=\dd{D}(\dd{R}_+, \dd{R}^2)$ be the set of functions from
  $\dd{R}_+$ to $\dd{R}^2$ that are right continuous on $[0, \infty)$
  and have left limits in $(0, \infty)$.  Assume that the reflection
  matrix $R$ satisfies   condition (\ref{eqn:P    matrix}). Then, $R$
  is  a completely-${\cal S}$ matrix (for a definition see, for
  example, the second 
  paragraph of the introduction
  in \cite{DaiMiyazawa2011a}).
  Therefore, for each $x(\cdot)\in
  \dd{D}^2$, there exists a $z(\cdot)\in \dd{D}^2$ such that $z(\cdot)$,
  together with some 
  $y(\cdot)\in \dd{D}^2$, 
  satisfies the following three conditions 
\begin{eqnarray}
\label{eqn:Skorohod1}
 &&  z(t) = x(t) + R y(t) \ge 0, \qquad t \ge 0, \\
 &&  y(0) =0 \text{ and  each component of $y$ is
   non-decreasing}, \label{eqn:Skorohod2}  \\
 &&  \int_0^\infty z_i(t) dy_i(t)=0, \quad i=1, 2.
\label{eqn:Skorohod3}
\end{eqnarray}
The path $z$ is called an $R$-regulation of $x$. It  is not unique in
general (e.g., see \cite{KozyMandVlad1993}).
Conditions (\ref{eqn:Skorohod1})-(\ref{eqn:Skorohod3}) define the
Skorohod problem associated with $R$. 

Assume conditions (\ref{eqn:P matrix}) and (\ref{eqn:stability 1}) so
that the stationary distribution is uniquely determined.  Let
$Z(\infty)$ be a random vector whose distribution is the stationary
distribution.  
 If there is a lower
semi-continuous function $I(\cdot)$ from $\dd{R}_{+}^{2}$ to
$\dd{R}_{+}$ that satisfies
\begin{eqnarray}
\label{eqn:LDP 1}
 && \limsup_{u \to \infty} \frac 1u \log \dd{P}( Z(\infty) \in u B) \le - \inf_{v \in \ol{B}} I(v),\\
\label{eqn:LDP 2}
 && \liminf_{u \to \infty} \frac 1u \log \dd{P}( Z(\infty) \in u B) \ge - \inf_{v \in B^{\rm o}} I(v)
\end{eqnarray}
for any measurable $B \subset \dd{R}_{+}^{2}$, where $\ol{B}$ and
$B^{\rm o}$ are closure and interior of $B$, respectively, it is said
that the large deviations principle (LDP) holds for the stationary
distribution, and $I(v)$ is called a large deviations rate function.

The LDP for the stationary distribution is verified by
\citet[Theorem~4]{maj96} 
for a multidimensional SRBM whose reflection matrix $R$ is an $\sr{M}$
matrix and the stability condition $R^{-1}\mu<0$ holds. Specializing
his result to two dimensions, we see that, under condition (\ref{eqn:P
  matrix}), $R^{-1}\mu<0$ is equivalent 
to \eqn{stability 1}.
 Furthermore, again specializing to two-dimensional case, for each
 $v\in\dd{R}_+^2$, 
he identifies  the rate function $I(v)$ as  the value function
of the following 
 variational 
problem (VP)
\begin{equation}
  \label{eqn:rate function 1}
  I(v) = \inf_{T\ge 0} \inf_{x\in {\cal H}, z(T)=v}
  \frac{1}{2}\int_0^T \langle \dot x(t)-\mu, \Sigma^{-1}(\dot
  x(t)-\mu)\rangle dt,
\end{equation}
where each $x(\cdot)\in {\cal H}\subset \dd{D}^2$ is an absolutely continuous
function on $\dd{R}_+$ such that its derivative function is locally
square integrable with respect to the Lebesgue measure, and
$z(\cdot)\in \dd{D}^2$ is an $R$-regulation 
of $x(\cdot)$. In general, an $R$-regulation $z$ in
  (\ref{eqn:rate function 1}) is not unique, and the inner inf is
  taken over all such $z$. 
\citet{DupuisRamanan-2002} proved an LDP when 
the $\sr{M}$ matrix condition in \cite{maj96} was relaxed.
However, an LDP for a general SRBM  remains unproven. Despite the lack of such
a general LDP, the  VP (\ref{eqn:rate function 1})  is well
defined  for a general SRBM.  The focus of this paper is  VP
(\ref{eqn:rate function 1}). We use terms
\emph{value function of the VP} and 
\emph{large deviations rate function}  interchangeably
even though the latter term does not make sense when the corresponding
LDP has not been proven.

For two-dimensional SRBMs, \citet{AvramDaiHasenbein01} solved VP
(\ref{eqn:rate function 1}) completely. In particular, they proved
that the value $I(v)$ of \eqn{rate function 1} can be obtained by
restricting paths $x$ to one of the three types. For each $T > 0$ and
$v \in \dd{R}_{+}^{2}$, define $\sr{H}^{(0)}(v, T)$ to be the set of
linear paths $x\in\dd{D}^2$ from $0$ to $v$ such that $x(T)=v$. Define
$\sr{H}^{(1)}(v, T)$ to be the set of continuous paths $x$ that have
two linear segments: during the first segment, a regulated path $z$
of $x$
stays on the  boundary $F_2$ of the state space $\dd{R}^2_+$ and
during the second segment, the regulated 
path travels linearly from the boundary to the end point $v$ at time $T$.
Namely,
\begin{eqnarray*}
  \lefteqn{\sr{H}^{(1)}(v, T) =  \biggl\{  x \in \sr{H}; x(t) = t c \text{ for } 
   t\in [0, s] \text{ and }  x(t) =   \frac {t - s}{T - s} (v - z(s))
   + sc  \text{ for }} \\
 &&  \hspace{11ex}  t \in (s,T] \text{ for some } s \in [0,T)
 \text{ and some } 
   c \in \dd{R}^{2} \text{ with } c_1>0 \text{ and }c_2<0 \biggr\}.
\end{eqnarray*}
Similarly, define
$\sr{H}^{(2)}(v, T)$ to be the set of two segment paths so that
during the first segment, an regulated path $z$ stays on
boundary  $F_1$ of the state space.

Using these sets, let
\begin{eqnarray}
\label{eqn:sub rate 1} 
  I^{(i)}(v) = \inf_{T > 0} \inf_{ x \in \sr{H}^{(i)}(v, T), z(T) = v}
  \frac{1}{2}\int_0^T \langle \dot x(t)-\mu, \Sigma^{-1}(\dot
  x(t)-\mu)\rangle dt,
 \qquad i = 0, 1, 2. 
\end{eqnarray}
\begin{remark} {\rm
\label{rem:}
  We here slightly changed the notation of
  \citet{AvramDaiHasenbein01}. Namely, $I^{(1)}(v)$ and $I^{(2)}(v)$
  are $\tilde{I}^{2}(v)$ and $\tilde{I}^{1}(v)$ of
  \cite{AvramDaiHasenbein01}, respectively, while $I^{(0)}(v)$ is the
  same as $\tilde I^0(v)$ in \cite{AvramDaiHasenbein01}. 
}\end{remark}
   
  We now present the following well known fact (see Theorem 5.1 of \citet{AvramDaiHasenbein01} for its proof).
\begin{lemma} {\rm
\label{lem:line optimality 1}
  For an SRBM that satisfies (\ref{eqn:P matrix}) and
  (\ref{eqn:stability 1}), the rate function $I(v)$ is obtained as 
 \begin{eqnarray}
\label{eqn:rate function 2}
  I(v) = \min(I^{(1)}(v), I^{(2)}(v)), \qquad v \in \dd{R}_{+}^{2}.
\end{eqnarray}
 }\end{lemma}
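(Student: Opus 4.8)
The inequality $I(v)\le \min(I^{(1)}(v),I^{(2)}(v))$ is immediate, so the content of the lemma is the reverse inequality. Indeed, for each $T>0$ and each $i=1,2$ the set $\sr{H}^{(i)}(v,T)$ is contained in $\sr{H}$, since a two-segment path is absolutely continuous with locally square-integrable derivative; moreover the linear path from $0$ to $v$ is the degenerate member of $\sr{H}^{(1)}(v,T)$ obtained by taking $s=0$ (then $z(0)=x(0)=0$ and the direction $c$ of the vanishing first segment is irrelevant). Hence restricting the infimum in \eqn{rate function 1} to the smaller family $\sr{H}^{(i)}(v,T)$ can only raise the value, so $I(v)\le I^{(i)}(v)$ for $i=1,2$. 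The plan for the reverse inequality is to show that any admissible pair $(x,z)$ with $z(T)=v$ can be replaced by a pair whose path lies in $\sr{H}^{(1)}(v,\cdot)\cup\sr{H}^{(2)}(v,\cdot)$ with no larger cost. This is Theorem~5.1 of \cite{AvramDaiHasenbein01}; I would follow that proof, whose structure I now outline.

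The first step is a reduction to piecewise-linear paths. The Lagrangian $w\mapsto\frac12\br{w-\mu,\Sigma^{-1}(w-\mu)}$ is convex because $\Sigma^{-1}$ is positive definite, so by Jensen's inequality replacing $x$ on a time interval by the affine interpolation of its endpoints does not increase the cost. To preserve admissibility, partition $[0,T]$ according to which stratum of $\dd{R}^2_+$ the regulated path $z$ currently occupies: the open quadrant, the relative interior of a face $F_i$, or the corner $0$. Over each such interval the Skorohod map is affine in $x$: in the interior $z=x$ with $y$ constant, while on a face $F_{3-i}$ one component of $y$ is frozen and the other is the affine function of $x$ determined by $z_{3-i}=0$ (here $r_{ii}>0$ from \eqn{P matrix} is used). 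Consequently, linearizing $x$ over such an interval makes $z$ linear there with unchanged endpoints, and conditions \eqn{Skorohod1}--\eqn{Skorohod3} persist, since nonnegativity, complementarity, and the monotonicity of $y$ are inherited from the ordering of the endpoint values by linearity. Thus the infimum in \eqn{rate function 1} is unchanged if one restricts to piecewise-linear paths whose regulated path traverses a finite ordered list of strata.

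The second step, which is the heart of the argument, collapses this list of strata to at most two segments of the prescribed type by exchange arguments. First, the regulated path need never return to a face after reaching the interior: the excursion can be replaced by a straight segment, which is cheaper by convexity and stays feasible by \eqn{P matrix}. Second, the regulated path need never touch both faces $F_1$ and $F_2$: were it to do so, a direct comparison of costs, exploiting the stability conditions \eqn{stability 1} together with the explicit reflection geometry of the two lines and the ellipse, shows that a path using a single face, or the straight path, is at least as cheap. After these reductions the regulated path is either one segment through the interior (a linear path, already covered) or one segment along a single face $F_{3-i}$ followed by one segment to $v$, that is, a member of $\sr{H}^{(i)}(v,T)$; the sign conditions $c_1>0$, $c_2<0$ in the definition of $\sr{H}^{(1)}(v,T)$ (and the mirror conditions for $\sr{H}^{(2)}$) are exactly what is needed for the first segment to hold the regulated path on the face against the reflection push. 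Combined with the easy direction this gives \eqn{rate function 2}. I expect the second exchange argument to be the main obstacle: excluding regulated paths that visit both boundary faces requires the detailed case analysis of reflection directions and stability carried out in \cite{AvramDaiHasenbein01}, whereas the remaining ingredients, namely convexity, the stratum-wise affine form of the Skorohod map, and the usual closure and boundary-of-the-quadrant technicalities, are comparatively mechanical.
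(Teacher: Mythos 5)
Your proposal matches the paper exactly: the paper's entire proof of this lemma is the one-line citation of Theorem~5.1 of \cite{AvramDaiHasenbein01}, and you correctly identify and cite that same theorem (including the easy-direction observation that $\sr{H}^{(0)}\subset\sr{H}^{(1)}\cap\sr{H}^{(2)}$, which is the paper's Remark~2.2). Your additional outline of the AVH argument---convexity/Jensen reduction to piecewise-linear paths, stratum-wise affinity of the Skorohod map, and exchange arguments to reduce to at most one boundary face---is a reasonable sketch; just be aware that the ``finitely many strata'' reduction and the one-face exchange are the genuinely delicate steps, which AVH carry out by explicit variational and geometric case analysis rather than a pure convexity exchange, so your sketch is heuristic at exactly the point you flag as the main obstacle.
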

\begin{remark} {\rm
\label{rem:line optimality 1}
  $I(v)$ is often written as
\begin{eqnarray*}
  I(v) = \min(I^{(0)}(v), I^{(1)}(v), I^{(2)}(v)), \qquad v \in \dd{R}_{+}^{2}.
\end{eqnarray*}
  However, $\sr{H}^{(0)} \subset \sr{H}^{(1)} \cap \sr{H}^{(2)}$, and therefore \eqn{rate function 2} is also valid.
}\end{remark}

\citet{AvramDaiHasenbein01} explicitly obtained expressions for  $I(v)$ in
\eqn{rate function 2} in their Theorem~3.1 and Lemma~6.1. However, it
is difficult to see how the rate function $I(v)$ depends on the 
SRBM data $(\Sigma, \mu, R)$ from their results. In the next section,
we use a geometric 
method to compute $I(v)$ directly. In Section~\ref{sect:ADH
  characterization} we will show that the computation in Section
\ref{sect:Geometrical view} recovers the results in
\cite{AvramDaiHasenbein01}.

\section{Geometrical view}
\label{sect:Geometrical view}
\setnewcounter

Recall that $Z(\infty)$ is the random vector whose distribution is 
 the stationary distribution of the two-dimensional
 SRBM. 
Denote the moment generating function of $Z(\infty)$ by $\varphi$, that is,
\begin{equation}
  \label{eq:momentPhi}
  \varphi(\theta) = \dd{E}\left( e^{\br{\theta,Z(\infty)}}\right),
  \quad \text{ for } \theta \in \dd{R}^{2}
\end{equation}
  as long as $\varphi(\theta)<\infty$. \citet{DaiMiyazawa2011a}
  identified the 
  convergence domain $\sr{D}$, which is defined to be 
\begin{eqnarray*}
  \sr{D} = \mbox{the interior of } \{\theta \in \dd{R}^{2}; \varphi(\theta) < \infty \}.
\end{eqnarray*}
 In this paper, we provide an alternative characterization of the
 convergence domain $\sr{D}$; see \thr{domain 1} below in
 this section. The characterization allows us to compute the value
 functions $I^{(1)}(v)$ and $I^{(2)}(v)$ geometrically for variational
 problem (\ref{eqn:sub rate 1})

\subsection{SRBM data and conditions for existence and stability}
\label{sect:SRBM data}

To describe the convergence domain ${\cal D}$ and its alternative
characterization, we first introduce three functions on $\dd{R}^2$.
They are
\begin{eqnarray}
\label{eqn:gamma +}
 && \gamma(\theta) = -\frac 12 \br{\theta, \Sigma \theta} - \br{\mu, \theta},\\
\label{eqn:gamma i}
 && \gamma_{i}(\theta) = \langle R^i, \theta \rangle= \theta_{1} r_{1i} + \theta_{2} r_{2i}, \qquad i=1,2,
\end{eqnarray}
where $R^i$ is again the $i$th column of the reflection matrix $R$.  These
three functions determine three geometric objects on the plane
$\dd{R}^2$.  Because we assume $\Sigma$ is nondegenerate, the
quadratic equation $\gamma(\theta)=0$ determines an ellipse that
passes through the origin.  Two linear equations $\gamma_1(\theta)=0$
and $\gamma_2(\theta)=0$ determine two lines that pass through the
origin. See \fig{ellipse-ray-stable} for an example.

Let
\begin{eqnarray*}
  p^{(1)} = \left(\begin{array}{c} r_{22} \\ -
      r_{12} \end{array}\right), \qquad p^{(2)} =
  \left(\begin{array}{c} -r_{21}\\ r_{11} \end{array}\right). 
\end{eqnarray*}
One can check that the transpose of $p^{(i)}$ is the $i$th row of the matrix $R^{-1}\det(R)$, where $\det(R)$ is the determinant of $R$. Thus,  $p^{(1)}$ and $p^{(2)}$ are orthogonal to the second and first columns, respectively, of $R$. Note that $p^{(1)} = p^{2}$ and $p^{(2)} = p^{1}$, where  $p^{i}$ is defined in \cite{AvramDaiHasenbein01} for $i=1, 2$. Clearly, line $\gamma_1(\theta)=0$ parallels directional
vector $p^{(2)}$ and line $\gamma_2(\theta)=0$ parallels directional
vector $p^{(1)}$.  It is convenient to assign a direction for each
line and from now on a directional line is called a ray. We assume ray
$\gamma_1(\theta)=0$ points in the direction of $p^{(2)}$ and ray
$\gamma_2(\theta)=0$ points in the direction of $p^{(1)}$.
In \fig{ellipse-ray-stable}, these directions are denoted
by arrows on two lines.

Specifying the three geometric objects in \fig{ellipse-ray-stable} is equivalent to specifying the SRBM data
$(\Sigma, \mu, R)$. 
\begin{figure}[h]
 	\centering
	\includegraphics[height=2.0in]{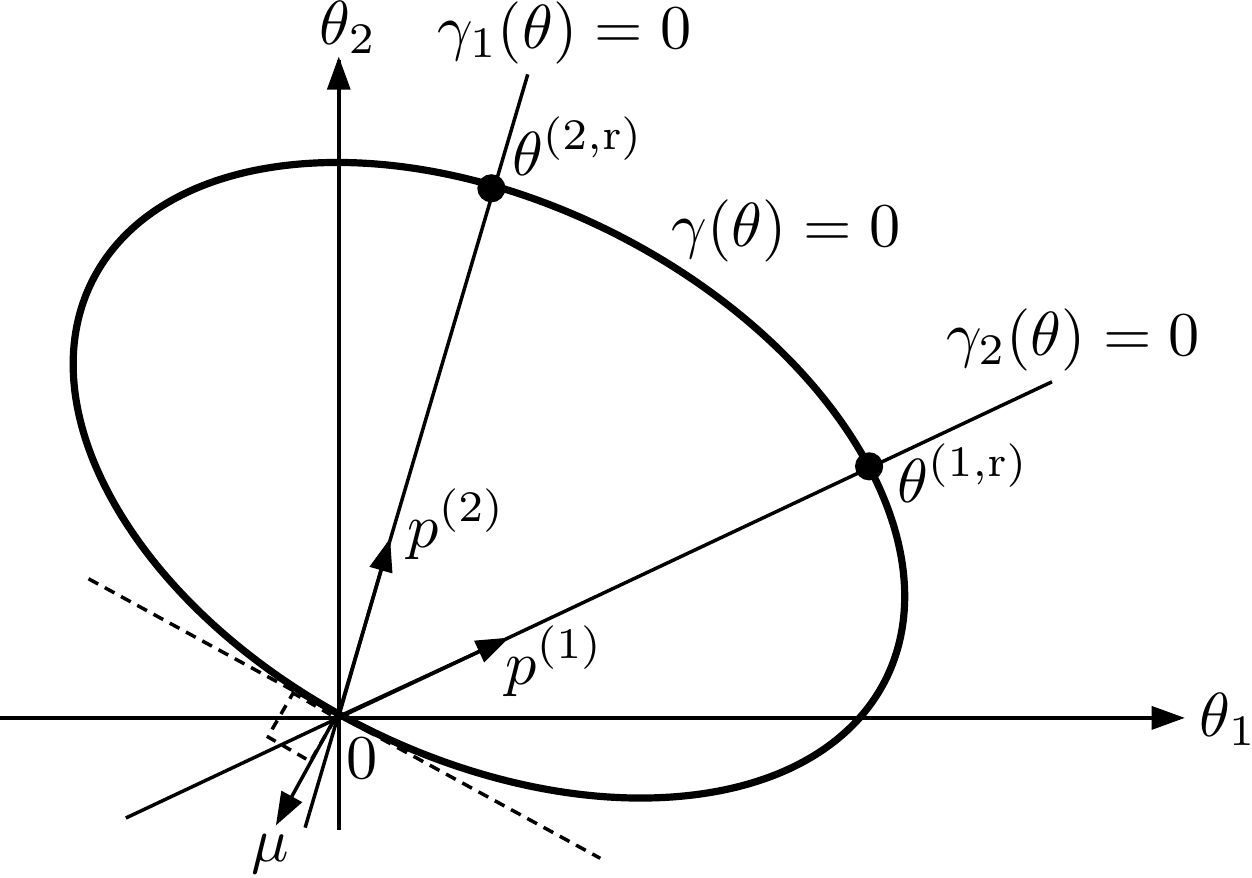} \hspace{5ex}
	\includegraphics[height=2.0in]{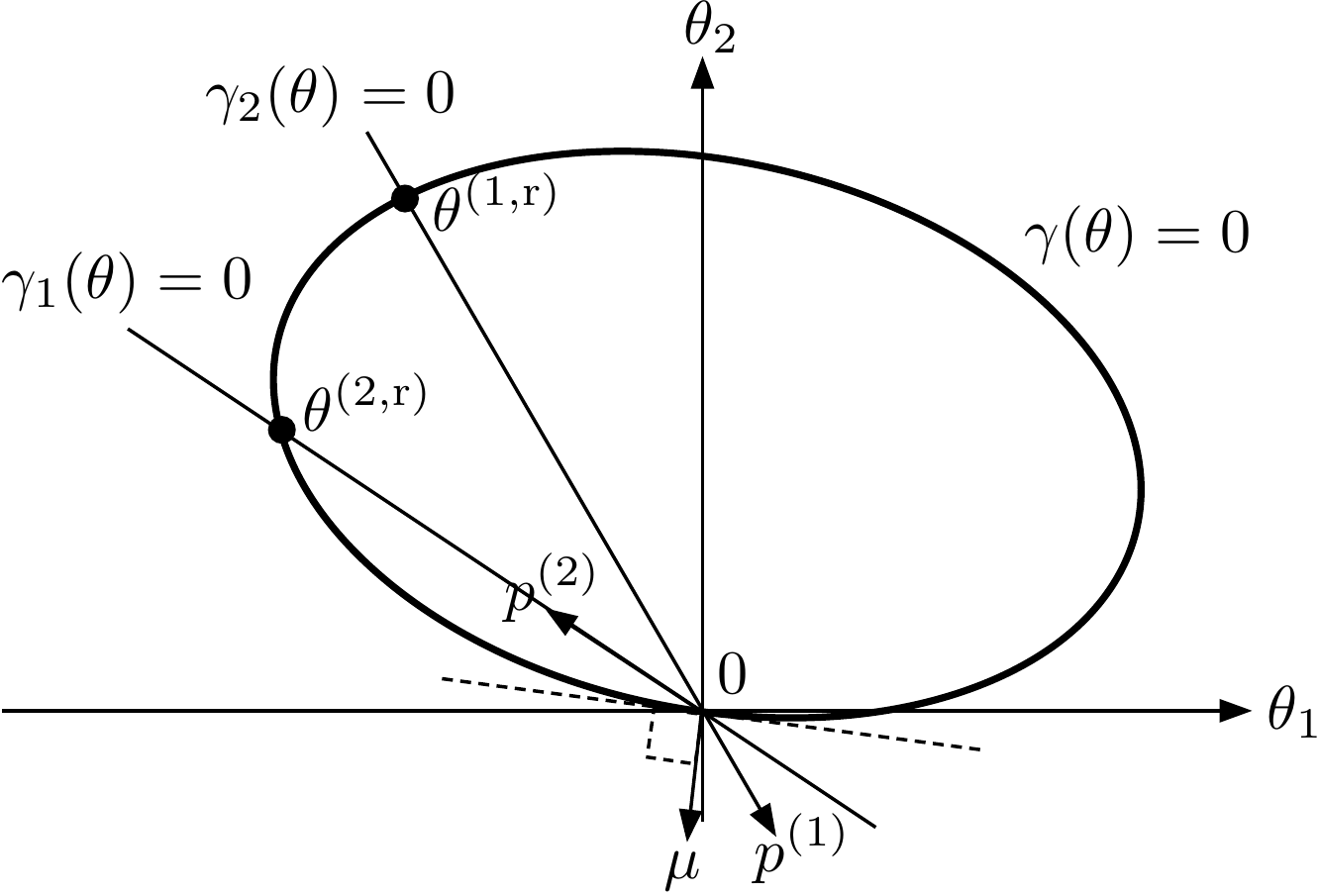} 
	\caption{Specifying the SRBM data $(\Sigma, \mu, R)$ is
          equivalent to specifying an ellipse and  two rays, all
          passing through the origin. The ellipse has normal direction
          $\mu$ at the origin. Ray $\gamma_{3-i}(\theta)=0$ has
          direction $p^{(i)}$ that  intersects with the ellipse at
          a point $\theta^{(i,r)}\neq 0$, $i=1,2$.
          Stability condition (\ref{eqn:stability 2}) is satisfied for
          the left figure and not satisfied for the right figure}
	\label{fig:ellipse-ray-stable}
\end{figure}
\begin{figure}[h]
 	\centering
	\includegraphics[height=2.0in]{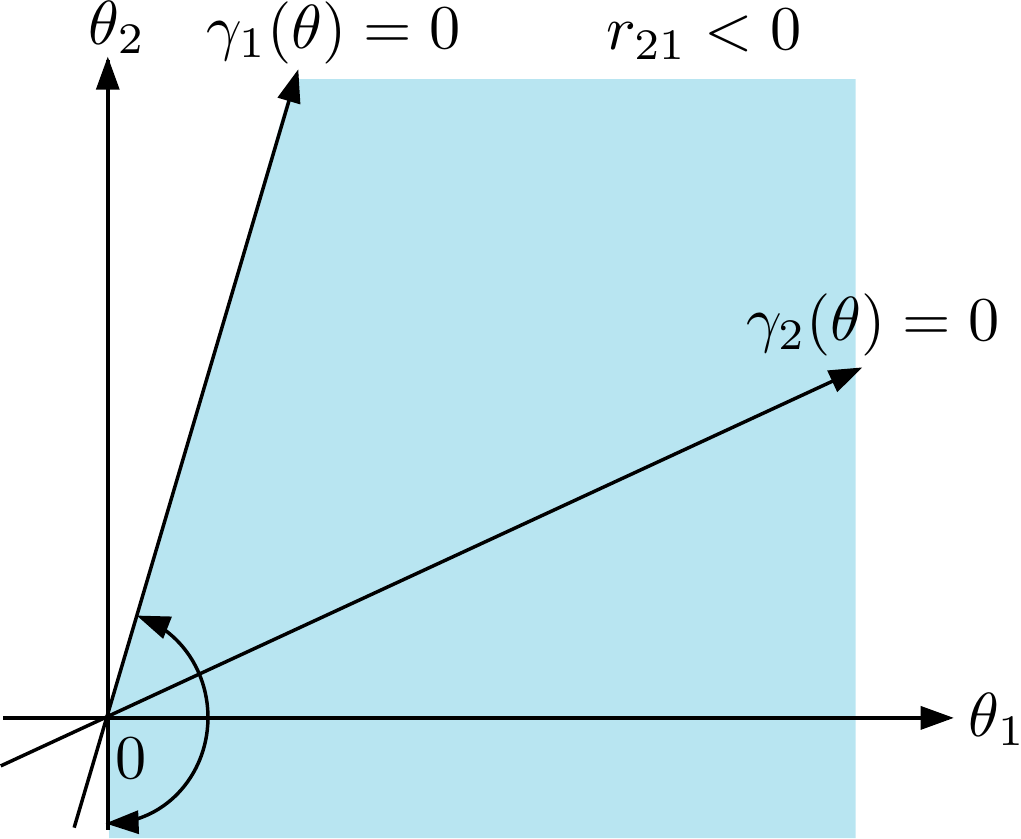} \hspace{5ex}
	\includegraphics[height=2.0in]{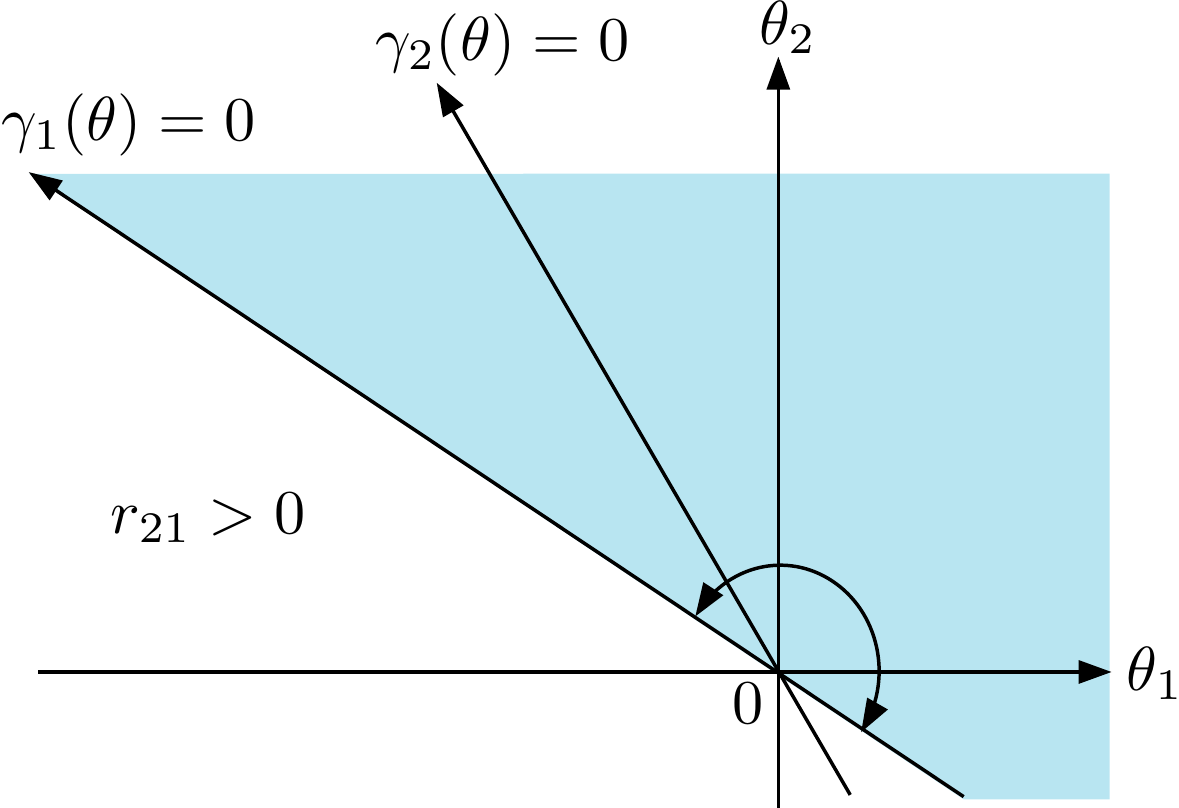} 
	\caption{The ${\cal P}$ matrix condition (\ref{eqn:P
            matrix}) is equivalent to that ray $\gamma_2(\theta)=0$
          must lie in the shaded region} 
	\label{fig:ray-P-matrix}
\end{figure}
A ray passing through the origin can be uniquely determined by its angle
in $(-\pi, \pi]$, measured counter-clockwise against the $\theta_1$
axis; the angle is positive if the ray points above the axis, and is
negative if it points below the axis. Ray one is said to be
\emph{above} (below) ray two if angle one is strictly larger (smaller)
than angle two. Ray one is said to be \emph{on} ray two if
the two angles are equal.

Geometrically, the ${\cal P}$ matrix condition (\ref{eqn:P matrix}) is
equivalent to ray $\gamma_2(\theta)=0$ being below
ray $\gamma_1(\theta)=0$; see \fig{ray-P-matrix} for an
example. 
With the definition of $p^{(i)}$ for $i=1, 2$, the stability condition
\eqn{stability 1} can 
be written as 
\begin{eqnarray}
\label{eqn:stability 2}
  \br{\mu, p^{(1)}} < 0, \quad \br{\mu, p^{(2)}} < 0.
\end{eqnarray}
  As discussed in \cite{DaiMiyazawa2011a}, this is an important
  observation in the geometrical view of the stability condition.
Condition (\ref{eqn:stability 2}) is
equivalent to the angle between vector $\mu$ and each 
ray being more than $\pi/2$. In \fig{ellipse-ray-stable}, 
the left panel corresponds a stable SRBM and the right panel an
unstable SRBM.
Since $\mu$ is the normal direction of the ellipse at the origin, 
condition (\ref{eqn:stability 2}) ensures that ray
$\gamma_i(\theta)=0$ is not tangent 
to the ellipse,  and therefore it must intersect with
the ellipse  at a point other than the origin.

\subsection{Convergence domain and rate function}
\label{sect:Convergence domain}

Let
\begin{displaymath}
    \Gamma = \{ \theta \in \dd{R}^{2}; \gamma(\theta) >  0 \}
\end{displaymath}
denote the interior of the ellipse. Its boundary $\partial \Gamma$ is
the ellipse itself. Since $\gamma(\theta)$ is a convex function,
$\Gamma$ is a convex set. We use $\theta^{(1,\max)}$
to denote  the right-most point on $\partial \Gamma$ and 
$\theta^{(2,\max)}$ the highest point; namely,
\begin{displaymath}
 \theta^{(i,\max)} = \arg \max_{\theta \in \partial \Gamma} \theta_{i},
 \quad i=1, 2.
\end{displaymath}
We also define two subsets $\Gamma_1$ and $\Gamma_2$ of 
$\Gamma$ via
\begin{equation}
  \label{eq:Gammai}
\Gamma_{i} = \{ \theta \in \Gamma; \gamma_{3-i}(\theta) <  0 \}
\quad \text{ for } i=1, 2.
\end{equation}
Clearly, $\Gamma_1$ is the region inside the ellipse and is below ray
$\gamma_2(\theta)=0$ and $\Gamma_2$ is the region inside the ellipse
and is above ray $\gamma_1(\theta)=0$.  Similar to $\partial \Gamma$,
the boundary of $\Gamma_{i}$ is denoted by $\partial \Gamma_{i}$ for
$i=1,2$.  
  Following \cite{DaiMiyazawa2011a}, we use $\theta^{(1, \rm r)}$ to denote the intersection of ray $\gamma_2(\theta)=0$ with the ellipse (other than the origin), and $\theta^{(2, \rm r)}$ the 
intersection of ray  $\gamma_1(\theta)=0$ with the ellipse; see \fig{ellipse-ray-stable}.
  We note that $\theta^{(1,\rm r)}$ is proportional to $p^{(1)}$ and therefore $\theta^{(1,\rm r)}_{1} p^{(1)} = r_{22} \theta^{(1,\rm r)}$. Hence,  $\gamma(\theta^{(1,\rm r)}) = 0$ implies that
  \begin{displaymath}
  \theta^{(1,\rm r)}_{1} = - \frac {2 r_{22} \br{\mu, p^{(1)}}}
  {\br{p^{(1)}, \Sigma p^{(1)}}} \quad  \text{ and } \quad
  \theta^{(1,\rm r)}_{2} =  \frac {2 r_{12} \br{\mu, p^{(1)}}}
  {\br{p^{(1)}, \Sigma p^{(1)}}}. 
  \end{displaymath}
One can similarly compute $\theta^{(2,\rm r)}_1$ and $\theta^{(2,\rm r)}_2$. Thus,
we have 
\begin{equation}
\label{eqn:ray and p 1}
  \theta^{(i,r)} = - \frac {2  \br{\mu, p^{(i)}}}
  {\br{p^{(i)}, \Sigma p^{(i)}}}p^{(i)}, \quad i=1, 2.
\end{equation}
\begin{figure}[tb]
 	\centering
	\includegraphics[height=2.0in]{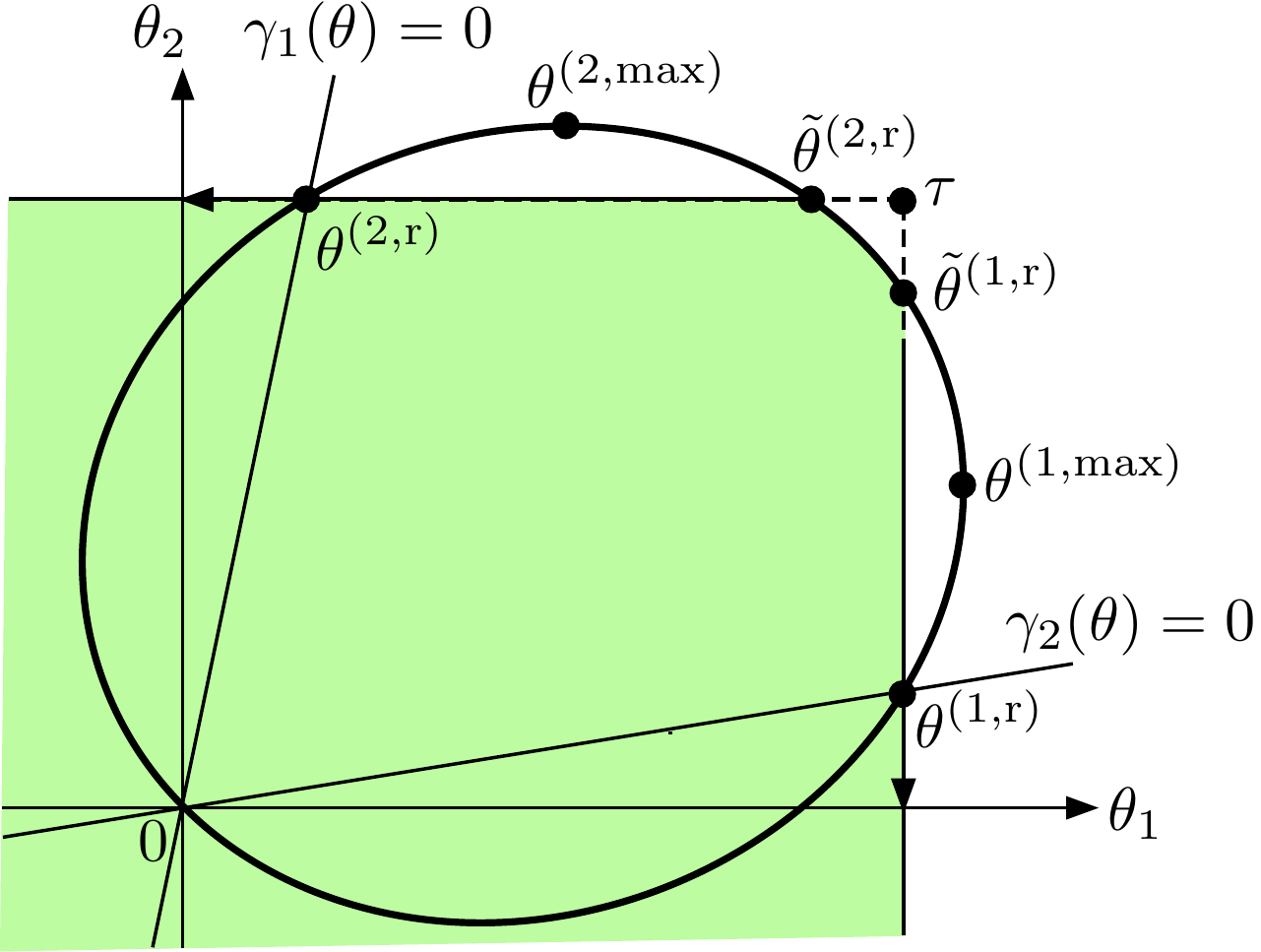} \hspace{5ex}
	\includegraphics[height=2.0in]{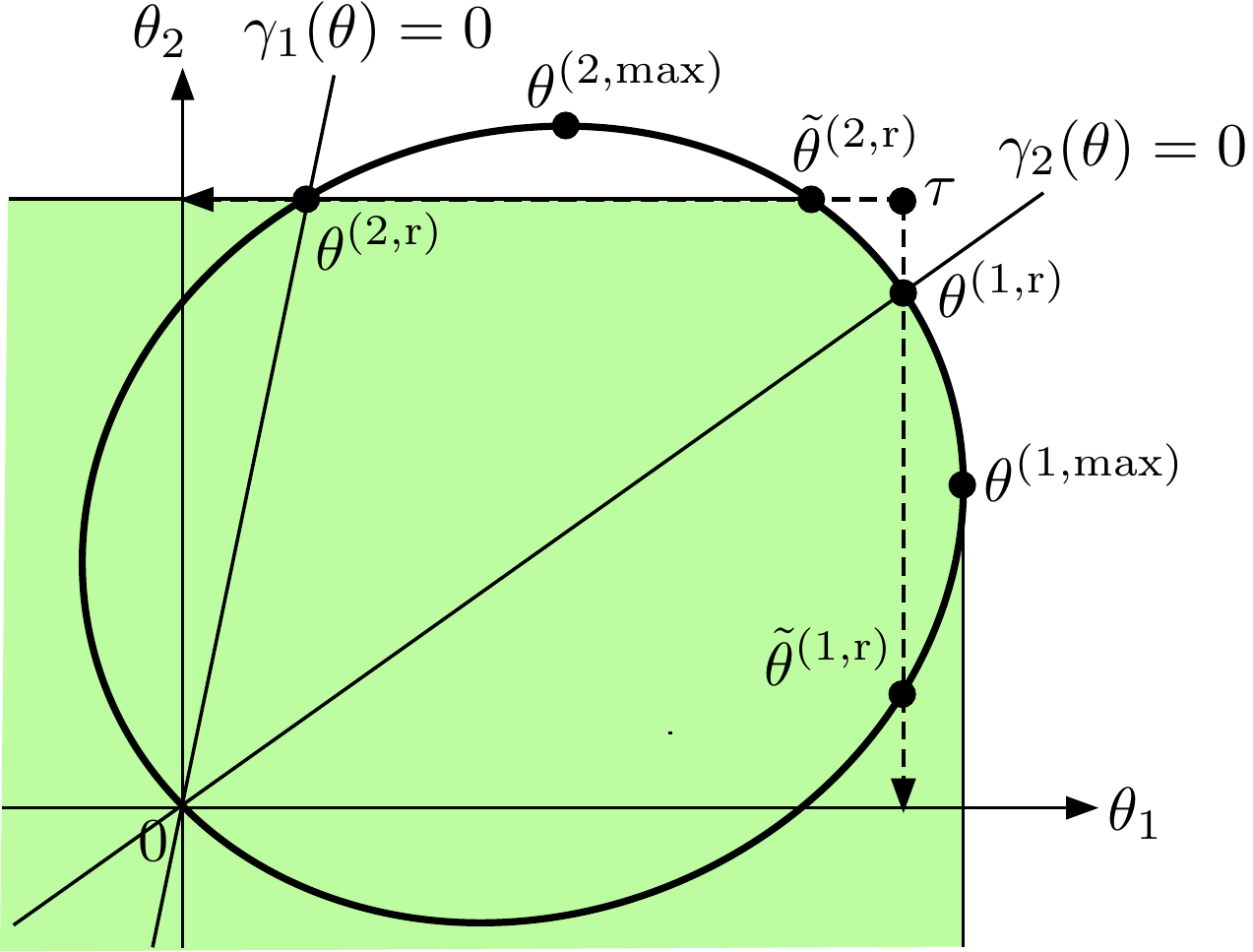} 
	\caption{Three pairs of points:
          $\theta^{(i,r)}$, $\tilde \theta^{(i,r)}$ and
          $\theta^{(i,\max)}$ on the ellipse}
	\label{fig:SixPoints}
 \end{figure}
Condition (\ref{eqn:stability 1}) or condition (\ref{eqn:stability 2}) is
equivalent to  
\begin{equation}
  \label{eq:thetairpositive}
  \theta^{(i, r)}_i >0 \quad \text{ for } i=1, 2.
\end{equation}
  
Now we define
a  pair of points $\tilde \theta^{(1,\rm r)}$ and $\tilde
\theta^{(2,\rm r)}$ on $\partial \Gamma$. The point $\tilde
\theta^{(1,\rm r)}$ is defined to be the  
``symmetry''  of $\theta^{(1, r)}$  on $\partial \Gamma$.
When $\theta^{(1,\rm r)}=\theta^{(1,\max)}$, we define $\tilde \theta^{(1,\rm r)}$
to be $\theta^{(1,\max)}$ itself.  When 
$\theta^{(1,\rm r)}\neq \theta^{(1,\max)}$, we define $\tilde
\theta^{(1,\rm r)}$ to be the unique point on $\partial \Gamma$ such that
\begin{equation}
  \label{eq:thetaSymmetry}
\tilde {\theta}^{(1,r)}_1= \theta^{(1,\rm r)}_1 \quad \text{ and } \quad \tilde
{\theta}^{(1,r)}_{2}\not = \theta^{(1,\rm r)}_{2}.
\end{equation}
Similarly, we can define $\tilde \theta^{(2,\rm r)}$. 
See \fig{SixPoints} for examples of three pairs of points
$\theta^{(i,r)}$, $\tilde \theta^{(i,r)}$ and $\theta^{(i,\max)}$,
$i=1, 2$. As illustrated in the figure,  $\tilde \theta^{(1,\rm r)}$ can
be higher or lower than 
$\theta^{(1,\rm r)}$.

\begin{figure}[tb]
 	\centering
	\includegraphics[height=2.5in]{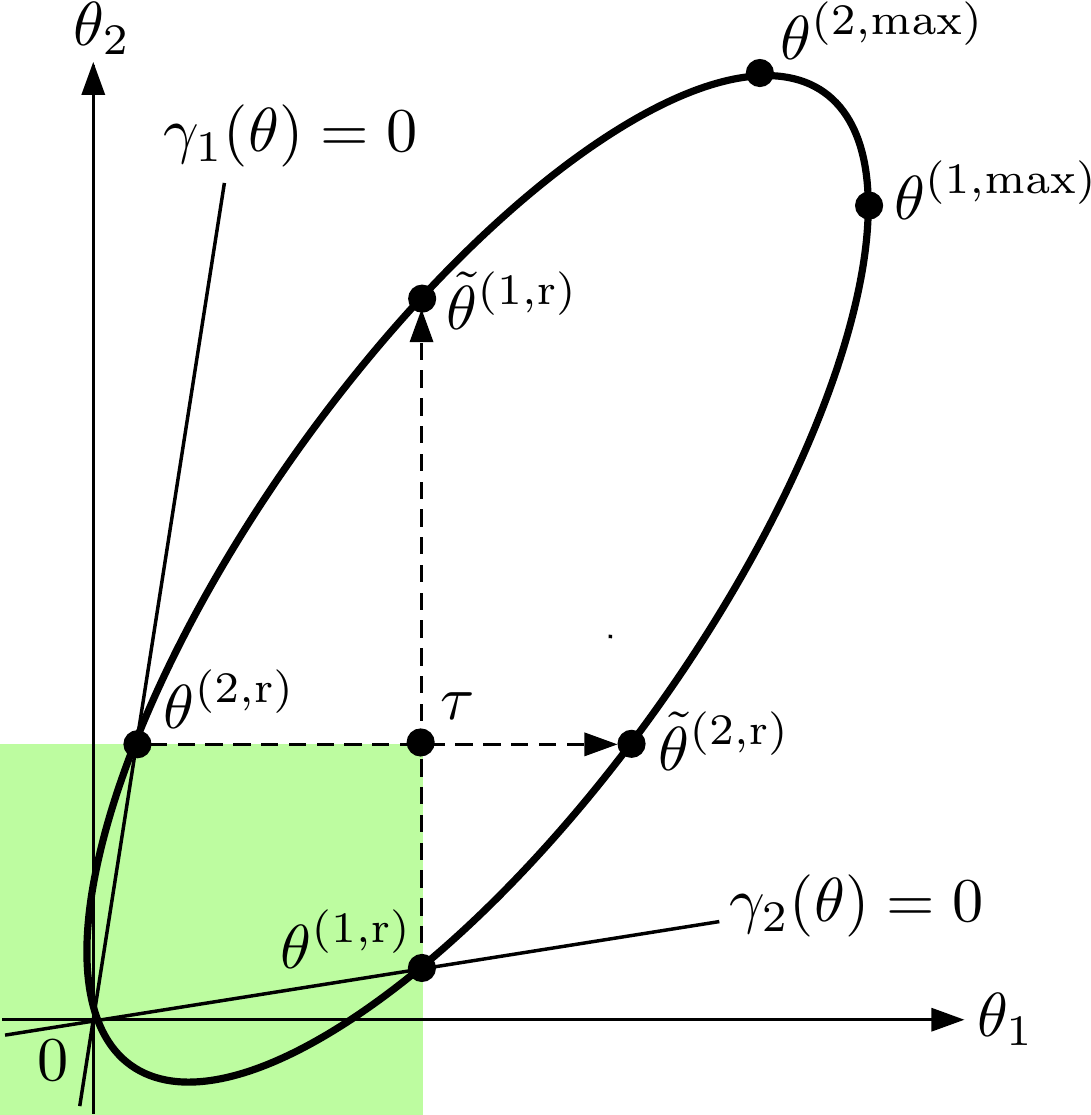} \hspace{5ex}
	\includegraphics[height=2.5in]{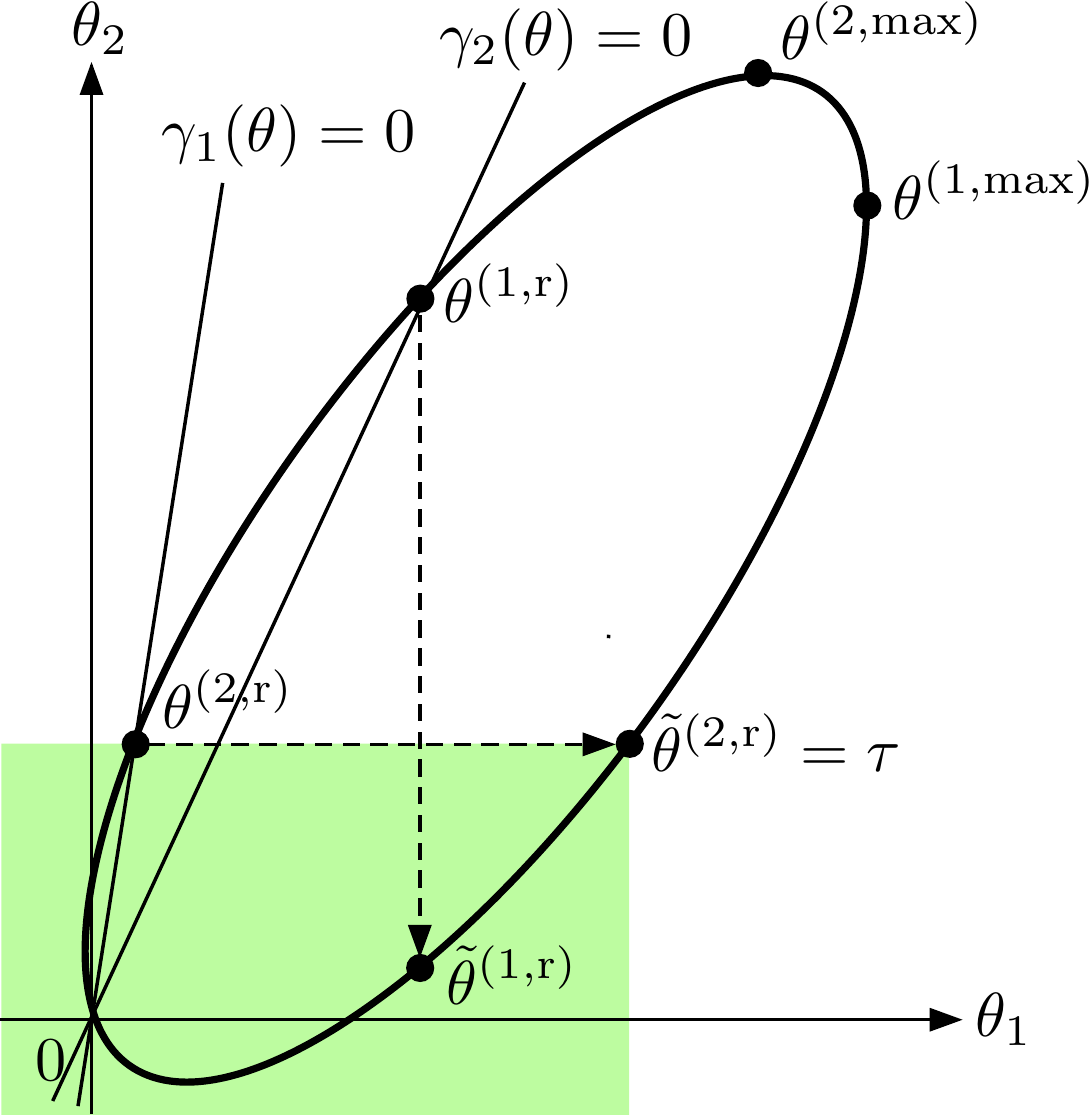} 
	\caption{An example in which both
          $\theta^{(1,\rm r)}$ and $\tilde \theta^{(1,\rm r)}$ are lower than
          $\theta^{(1,\max)}$}
	\label{fig:boththeta1rlowerthantheta1max}
 \end{figure}
The following is an important condition for the SRBM data:
\begin{equation}
  \label{eq:theta1maxinGamma1}
  \theta^{(1,\max)} \in \partial \Gamma_1
\end{equation}
In the right panel of \fig{SixPoints}, condition
(\ref{eq:theta1maxinGamma1}) is satisfied, while in the left panel,
the condition is not satisfied. As it will be explained in
Section~\ref{sect:ADH characterization}, condition
(\ref{eq:theta1maxinGamma1}) is equivalent to the fact that the face
$F_2= \{x \in \dd{R}^2_+; x_2=0\}$ is \emph{not reflective}, an
important term introduced in \cite{AvramDaiHasenbein01}. Its meaning is explained in \cite[page 264]{AvramDaiHasenbein01};
``When $F_i$ is not
reflective, \ldots, the face $F_i$ has no boundary influence on
solutions to the VP [in (\ref{eqn:rate function 1})] for any $v\in 
\dd{R}^2_+$''.

Under condition (\ref{eq:theta1maxinGamma1}), $\tilde\theta^{(1,\rm r)}$
is always at most as high as $\theta^{(1,\rm r)}$.
 Readers are warned  that condition
(\ref{eq:theta1maxinGamma1}) is \emph{not} equivalent to 
\begin{equation}
  \label{eq:theta1rlesstheta1max}
  \theta^{(1,\rm r)}_2 \ge \theta^{(1,\max)}_2.
\end{equation}
The right panel of \fig{boththeta1rlowerthantheta1max}
gives an example that satisfies condition
(\ref{eq:theta1maxinGamma1}), but not condition
(\ref{eq:theta1rlesstheta1max}). Conditions
(\ref{eq:theta1maxinGamma1}) and 
(\ref{eq:theta1rlesstheta1max}) are equivalent if and only if 
$\theta^{(1,\rm r)}$ or $\tilde \theta^{(1,\rm r)}$ is on the right side of
$\theta^{(2,\max)}$, namely, 
\begin{equation}
  \label{eq:theta1rrighttheta2max}
\theta^{(1,\rm r)}_1=\tilde \theta^{(1,\rm r)}_1 \ge \theta^{(2,\max)}_1.
\end{equation}

\begin{figure}[h]
 	\centering
	\includegraphics[height=2.2in]{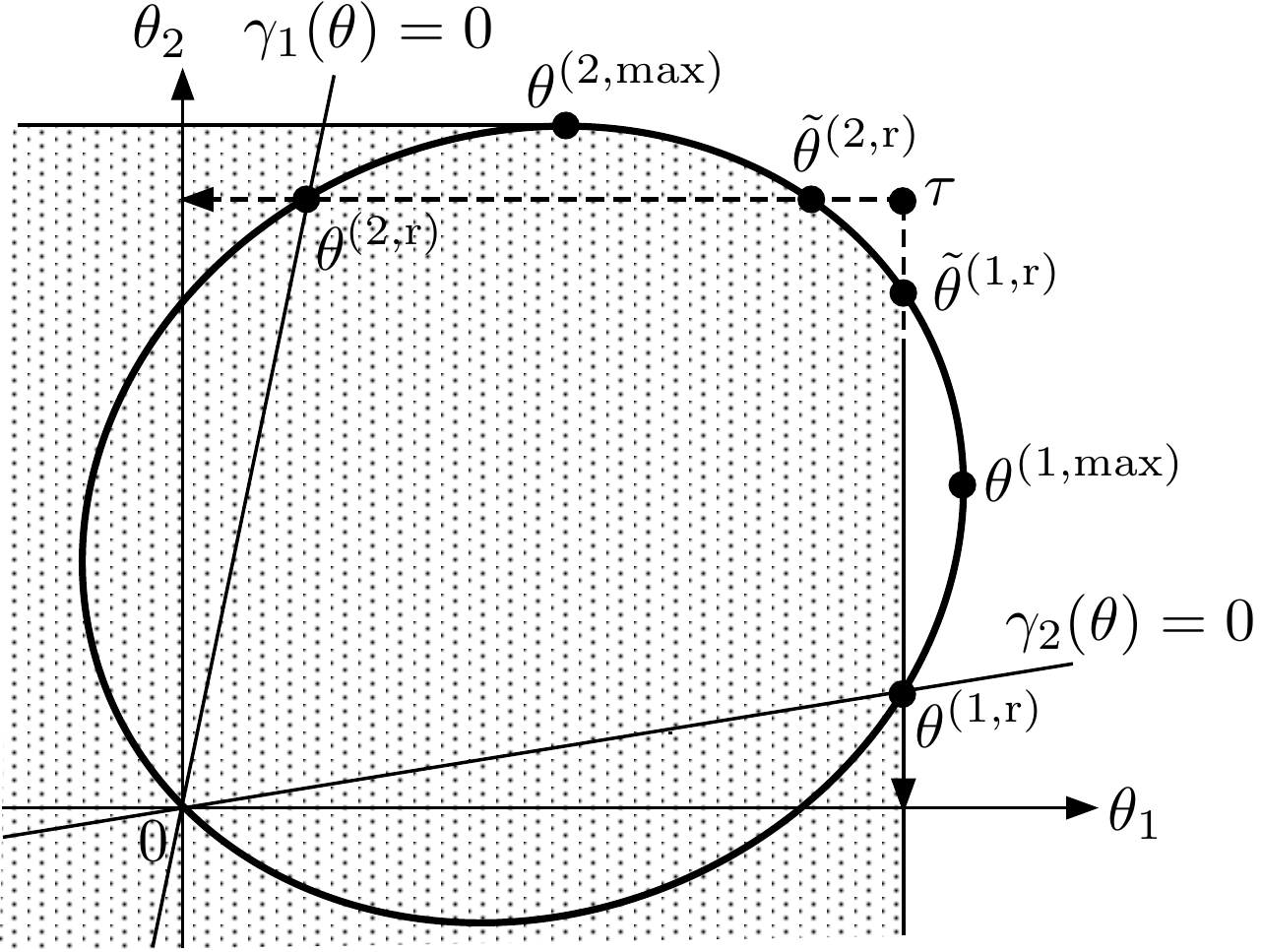} \hspace{5ex}
	\includegraphics[height=2.2in]{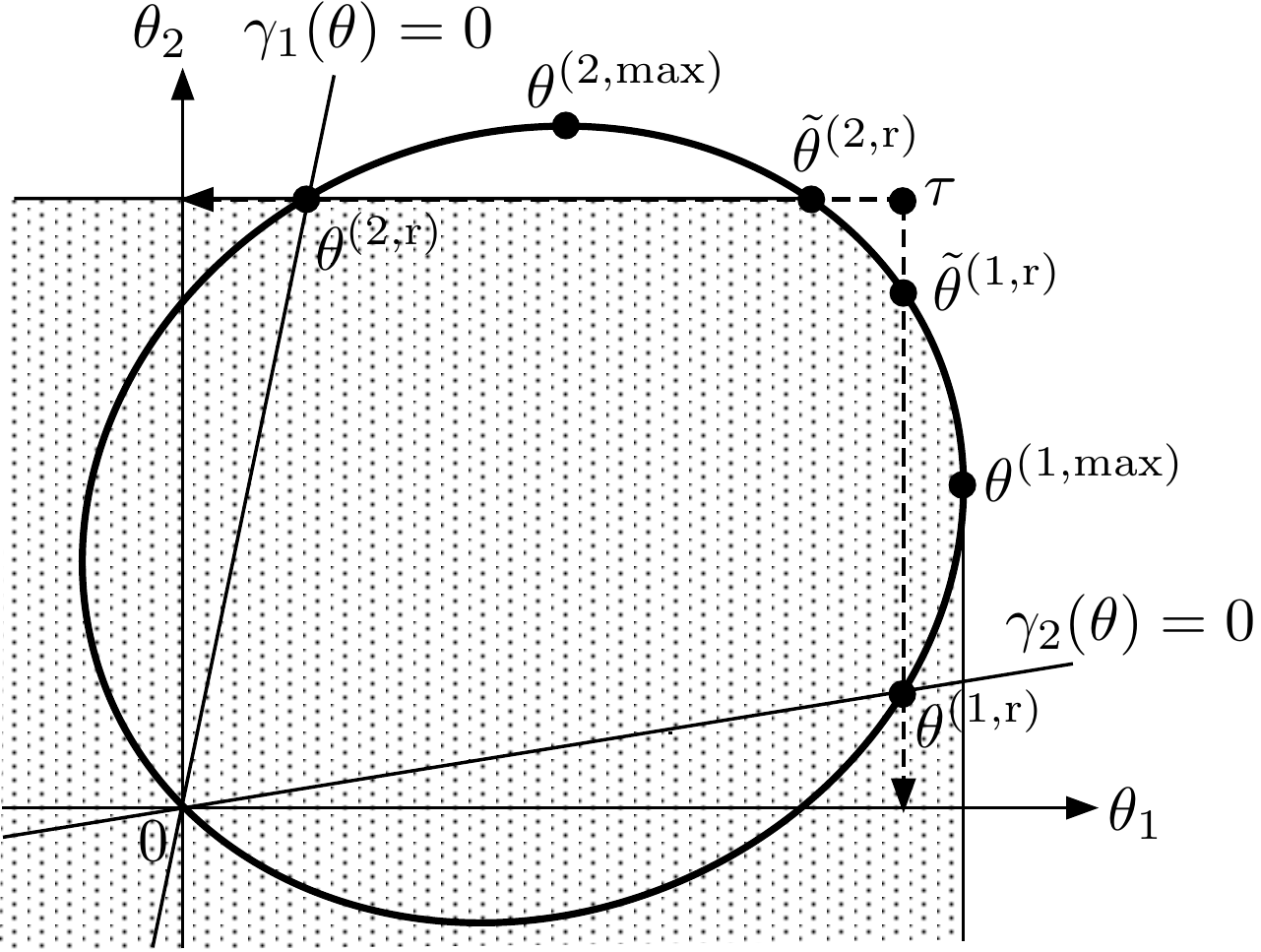} 
	\caption{An example of ${\cal D}^{(1)}$ (left) and ${\cal
            D}^{(2)}$ (right)}
	\label{fig:D1D2CategoryI}
\end{figure}
Now we define two subsets $ \sr{D}^{(1)}$ and $ \sr{D}^{(2)}$ of
$\dd{R}^{2}$.  For this, the following points on the ellipse $\partial \Gamma$ are convenient:
\begin{eqnarray*}
  \theta^{(i,\Gamma)} = \left\{\begin{array}{ll}
  \theta^{(i,\rm r)}, \quad & \mbox{if } \theta^{(i,\max)} \not\in \partial \Gamma_{i},\\
  \theta^{(i,\max)}, \quad & \mbox{if } \theta^{(i,\max)} \in \partial\Gamma_{i},
  \end{array} \right.
  \qquad i =1,2.
\end{eqnarray*}
  We define
  \begin{equation}
    \label{eq:Di}
  \sr{D}^{(i)} =
\Bigl\{ \theta \in \dd{R}^2; \text{ there exists a }
  \theta'\in \Gamma \text{ such that }  \theta < \theta' \text{ and
  }\theta_{i}' < 
  \theta^{(i,\Gamma)}_{i}\Bigr\}, \quad i =1,2.
  \end{equation}
 In what follows, we mainly consider $\sr{D}^{(1)}$ because results for $\sr{D}^{(2)}$ are symmetric. Note that the definition of $\sr{D}^{(1)}$ depends on whether
condition (\ref{eq:theta1maxinGamma1}) is satisfied.  We have the
following lemma. 

\begin{lemma}
\label{lem:D1characterization}
The domain  $\sr{D}^{(1)}$ has the following form:
\begin{equation}
  \label{eq:D1characterization}
  \sr{D}^{(1)}=
  \begin{cases}
\bigl     \{ \theta\in \dd{R}^2; \theta<
    \tilde \theta^{(1,\rm r)}\bigr\}
    & \mbox{if } \theta^{(1,\max)} \not\in \partial\Gamma_{1} \mbox{ and if } \theta^{(1,\rm r)}_{1} \le  \theta^{(2,\max)}_{1}, \\
 \Gamma_{\max}\cap\bigl\{ \theta\in \dd{R}^2; \theta_1<
    \tilde \theta^{(1,\rm r)}_1\bigr\}
 & \mbox{if } \theta^{(1,\max)} \not\in \partial\Gamma_{1} \mbox{ and if } \theta^{(1,\rm r)}_{1} > \theta^{(2,\max)}_{1}, \\
  \Gamma_{\max},
    & \text{if } \theta^{(1,\max)} \in \partial\Gamma_{1},
  \end{cases}
\end{equation}
where
\begin{equation}
  \label{eq:Gammamax}
 \Gamma_{\max} =\{ \theta\in \dd{R}^2; \text{ there exists a $\theta'\in \Gamma
   $  such that $\theta< \theta'$}\}.
\end{equation}
\end{lemma}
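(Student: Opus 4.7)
The plan is to read $\sr{D}^{(1)}$ from its definition \eqref{eq:Di} as the \emph{downward shadow} (in the componentwise strict order) of the truncated region
\begin{displaymath}
  \Gamma^{(1)} := \Gamma \cap \{\theta'\in \dd{R}^2; \theta'_1 < \theta^{(1,\Gamma)}_1\};
\end{displaymath}
that is, $\theta\in\sr{D}^{(1)}$ iff some $\theta'\in\Gamma^{(1)}$ dominates $\theta$ strictly in both coordinates. The three cases correspond to three qualitatively different shapes of this shadow, determined by whether the $\theta_1$-cut bites at all and where it sits relative to the topmost point $\theta^{(2,\max)}$ of the ellipse. Case (iii) is immediate: $\theta^{(1,\max)}\in\partial\Gamma_{1}$ forces $\theta^{(1,\Gamma)}=\theta^{(1,\max)}$, and since $\Gamma$ is open with $\theta^{(1,\max)}$ the unique rightmost boundary point, every $\theta'\in\Gamma$ already satisfies $\theta'_1<\theta^{(1,\max)}_1$; hence $\Gamma^{(1)}=\Gamma$ and the shadow is $\Gamma_{\max}$ by \eqref{eq:Gammamax}.

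For Cases (i) and (ii) we have $\theta^{(1,\Gamma)}=\theta^{(1,\rm r)}$ with $\theta^{(1,\rm r)}_1<\theta^{(1,\max)}_1$ strictly. A preliminary geometric step identifies $\tilde\theta^{(1,\rm r)}$ as the \emph{upper} ellipse point on the chord $\theta_1=\theta^{(1,\rm r)}_1$: since $\gamma_2$ is linear and $\gamma_2(\theta^{(1,\rm r)})=0$, one has $\gamma_2(\tilde\theta^{(1,\rm r)})=r_{22}(\tilde\theta^{(1,\rm r)}_2-\theta^{(1,\rm r)}_2)$ with $r_{22}>0$, so it suffices to prove $\gamma_2(\tilde\theta^{(1,\rm r)})>0$. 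The right arc $\partial\Gamma\cap\{\theta_1>\theta^{(1,\rm r)}_1\}$ joins $\theta^{(1,\rm r)}$ to $\tilde\theta^{(1,\rm r)}$ through $\theta^{(1,\max)}$; the hypothesis $\theta^{(1,\max)}\notin\partial\Gamma_{1}$ gives $\gamma_2(\theta^{(1,\max)})>0$, and since the only zeros of $\gamma_2|_{\partial\Gamma}$ are the two ray--ellipse intersections $0$ and $\theta^{(1,\rm r)}$, $\gamma_2$ maintains a positive sign on the interior of this arc, yielding $\gamma_2(\tilde\theta^{(1,\rm r)})\ge 0$ at the endpoint by continuity, with equality ruled out by $\tilde\theta^{(1,\rm r)}\ne\theta^{(1,\rm r)}$.

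Let $h(\theta'_1)$ denote the upper branch of $\partial\Gamma$ at abscissa $\theta'_1$, which is concave over the $\theta_1$-range of the ellipse and maximized at $\theta^{(2,\max)}_1$ with value $\theta^{(2,\max)}_2$. In Case (i), $\theta^{(1,\rm r)}_1\le\theta^{(2,\max)}_1$ places the cut on the nondecreasing part of $h$, so continuity gives $\sup\{h(\theta'_1);\theta'_1<\theta^{(1,\rm r)}_1\}=h(\theta^{(1,\rm r)}_1)=\tilde\theta^{(1,\rm r)}_2$; both inclusions $\sr{D}^{(1)}\subseteq\{\theta<\tilde\theta^{(1,\rm r)}\}$ (bounding directly via $\theta<\theta'\in\Gamma^{(1)}$) and the reverse (pick $\theta'_1$ near $\theta^{(1,\rm r)}_1$ with $h(\theta'_1)>\theta_2$, then $\theta'_2\in(\theta_2,h(\theta'_1))$) then follow. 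In Case (ii), $\theta^{(1,\rm r)}_1>\theta^{(2,\max)}_1$ puts the global maximum $\theta^{(2,\max)}_2$ of $h$ strictly to the left of the cut; the $\theta_2$-envelope of the shadow therefore matches that of $\Gamma_{\max}$ itself on $\{\theta_1<\theta^{(2,\max)}_1\}$ and follows the decreasing branch $h$ on $[\theta^{(2,\max)}_1,\tilde\theta^{(1,\rm r)}_1)$, whereas the $\theta_1$-cut $\theta_1<\tilde\theta^{(1,\rm r)}_1$ remains active, yielding $\sr{D}^{(1)}=\Gamma_{\max}\cap\{\theta_1<\tilde\theta^{(1,\rm r)}_1\}$.

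The main obstacle is the preliminary geometric identification of $\tilde\theta^{(1,\rm r)}$ as the upper chord endpoint; once this is in hand the rest reduces to monotonicity and continuity of $h$ together with routine set-inclusion checks.
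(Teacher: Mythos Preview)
Your argument is correct and follows essentially the same geometric route as the paper: both proofs split according to whether $\theta^{(1,\max)}\in\partial\Gamma_1$ and, when not, according to the position of $\theta^{(1,\rm r)}_1$ relative to $\theta^{(2,\max)}_1$. Your proposal is considerably more detailed than the paper's proof, which is largely a figure-assisted sketch; in particular, you supply a genuine justification (via the sign of $\gamma_2$ along the right arc of $\partial\Gamma$) for the claim that $\tilde\theta^{(1,\rm r)}$ lies above $\theta^{(1,\rm r)}$ when $\theta^{(1,\max)}\notin\partial\Gamma_1$, whereas the paper simply asserts this, and your use of the upper-branch function $h$ makes the shadow computations in Cases~(i) and~(ii) precise.
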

\begin{proof}
Assume that  condition (\ref{eq:theta1maxinGamma1})
is satisfied. Then, $\theta^{(1,\Gamma)} = \theta^{(1,\max)}$, and therefore $\sr{D}^{(1)} = \Gamma_{\max}$ (see the right panel of \fig{D1D2CategoryI}). We next assume that (\ref{eq:theta1maxinGamma1})
is not satisfied. Then, $\tilde \theta^{(1,\rm r)}$ higher than $\theta^{(1,\rm r)}$.
 If $\tilde \theta^{(1,\rm r)}$ is on the left side of
$\theta^{(2,\max)}$, all points $\theta \in \sr{D}^{(1)}$ are dominated by $\tilde \theta^{(1,\rm r)}$; see the left panel of
\fig{boththeta1rlowerthantheta1max}. Otherwise,
$\tilde\theta^{(1,\rm r)}$ must be higher than $\theta^{(1,\max)}$, and 
$\tilde \theta^{(1,\rm r)}$ is the lowest point on $\partial \Gamma
\cap\partial \sr{D}^{(1)}$; see the left panel of \fig{D1D2CategoryI}. Thus, \eq{D1characterization} is obtained.
\end{proof}

\fig{D1D2CategoryI} illustrates the domains
$\sr{D}^{(1)}$ and $\sr{D}^{(2)}$.
We are ready to present the first theorem on the value function $I^{(i)}(v)$.
It will proved in Section
 \ref{sect:ADH characterization}. 
\begin{theorem} {\rm
\label{thr:geometric view 1}
Assume  conditions (\ref{eqn:P matrix})  and \eqn{stability 1}.
For $v \in \dd{R}^{2}_+$, 
\begin{eqnarray}
\label{eqn:geometric view i}
  I^{(i)}(v) = \sup \{\br{v,\theta}; \theta \in \sr{D}^{(i)} \}, \qquad i=1,2.
\end{eqnarray}

}\end{theorem}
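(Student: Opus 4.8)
The plan is to combine the explicit solution of the constrained variational problem in \citet{AvramDaiHasenbein01} with the convex-duality structure of the right-hand side of \eqn{geometric view i}. By symmetry it suffices to treat $i=1$. First I would recall from \cite{AvramDaiHasenbein01} the explicit formula for $I^{(1)}(v)$: an optimal path is either a straight line from the origin to $v$, or a two-segment path whose first segment travels along $F_2$ with some velocity $c$ (with $c_1>0$, $c_2<0$) and whose second segment is a straight line to $v$. Each case yields an algebraic expression; the ``interior'' case gives $I^{(1)}(v)=\sup\{\br{v,\theta}:\gamma(\theta)\ge 0\}=\sup\{\br{v,\theta}:\theta\in\ol\Gamma\}$ for $v$ in an appropriate cone, since the Legendre transform of the quadratic cost $\tfrac12\br{\dot x-\mu,\Sigma^{-1}(\dot x-\mu)}$ is exactly the quadratic $\gamma$, and the boundary case introduces the ``exit velocity'' that, as advertised in \eqref{eq:exitVelocity}, is $n^\Gamma(\tilde\theta^{(2,\rm r)})=\Sigma\tilde\theta^{(2,\rm r)}+\mu$.

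The second step is to compute the right-hand side $\sup\{\br{v,\theta}:\theta\in\sr{D}^{(1)}\}$ directly using the description of $\sr{D}^{(1)}$ furnished by \lem{D1characterization}. In each of the three cases of \eqn{D1characterization} the domain $\sr{D}^{(1)}$ is a lower set (closed under $\le$) of one of the following forms: the orthant below a single point $\tilde\theta^{(1,\rm r)}$; the intersection of $\Gamma_{\max}$ with a half-plane $\{\theta_1<\tilde\theta^{(1,\rm r)}_1\}$; or $\Gamma_{\max}$ itself. For a lower set, the linear functional $\theta\mapsto\br{v,\theta}$ with $v\in\dd{R}^2_+$ attains its supremum on the ``upper-right'' boundary, which here is an arc of $\partial\Gamma$ possibly truncated by the vertical line through $\tilde\theta^{(1,\rm r)}$ or capped at the point $\tilde\theta^{(1,\rm r)}$. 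Maximizing $\br{v,\theta}$ over an arc of the ellipse is the classical Lagrange computation: the maximizer $\theta^*$ satisfies $v\parallel n^\Gamma(\theta^*)=\Sigma\theta^*+\mu$, i.e. $v$ is the outward normal to $\partial\Gamma$ at $\theta^*$; if that interior maximizer falls outside the truncated arc, the supremum is attained at the endpoint $\tilde\theta^{(1,\rm r)}$ (or along the vertical edge). Carrying this out gives a closed-form value for the right-hand side, and one then checks it agrees, case by case and cone by cone in $v$, with the \cite{AvramDaiHasenbein01} formula for $I^{(1)}(v)$; the key identity making the two match is precisely \eqref{eq:exitVelocity}, which identifies the exit velocity with $n^\Gamma(\tilde\theta^{(2,\rm r)})$, together with the elementary fact that $\gamma(\tilde\theta^{(2,\rm r)})=0$ and $\tilde\theta^{(2,\rm r)}_2=\theta^{(2,\rm r)}_2$.

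The main obstacle I anticipate is bookkeeping the case analysis so that the regions of $v\in\dd{R}^2_+$ on which the two formulas are compared line up correctly. On the variational side the dichotomy is ``straight line versus broken path through $F_2$,'' governed by whether the exit velocity direction lies between $v$ and the relevant boundary; on the geometric side the dichotomy is ``interior maximizer on the ellipse versus corner/edge maximizer of $\sr{D}^{(1)}$,'' governed by where $v$ sits relative to the normal cone at $\tilde\theta^{(1,\rm r)}$. Showing these two partitions of the $v$-space coincide, and that on the overlap the values agree, is the crux; the reflective/non-reflective trichotomy of \lem{D1characterization} (conditions \eqref{eq:theta1maxinGamma1}, \eqref{eq:theta1rlesstheta1max}) must be threaded through both sides in parallel. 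Once the partitions are matched, each individual equality is a short computation with the Lagrange condition and the explicit coordinates in \eqn{ray and p 1}. A cleaner alternative, which I would pursue if the direct matching gets unwieldy, is to bypass \cite{AvramDaiHasenbein01}'s final formulas and instead re-derive $I^{(1)}(v)$ intrinsically: write the straight-line cost as a Legendre transform to get the $\ol\Gamma$-supremum, handle the broken path by a two-stage Legendre/minimax argument that introduces $\gamma_1$ (the constraint $z$ stays on $F_2$ forces a dual variable supported where $\gamma_1\le 0$), and recognize the resulting dual program as exactly $\sup\{\br{v,\theta}:\theta\in\sr{D}^{(1)}\}$ via \lem{D1characterization}; I defer the details of that route to Section \ref{sect:ADH characterization}.
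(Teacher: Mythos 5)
Your plan is essentially the paper's own route: invoke Theorem~6.3 of Avram--Dai--Hasenbein for the explicit value of $I^{(1)}(v)$ in the two regimes (straight line versus broken path along $F_2$), compute $\sup\{\br{v,\theta};\theta\in\sr{D}^{(1)}\}$ case by case from \lem{D1characterization} via the normal-cone/Lagrange picture (this is exactly what \lem{geometric} does), and match the two case partitions using the identification of the exit velocity with an outward normal to $\partial\Gamma$. The paper proves that identification as \eq{tildea2normaltildetheta1r} and uses the reflectivity criterion (the unlabeled lemma stating $F_2$ is reflective iff \eq{theta1maxinGamma1} fails) to align the regimes, which is precisely the bookkeeping you flag as the crux.

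One index slip to correct before carrying it out: for $i=1$ the broken segment runs along $F_2$, so the relevant exit velocity is $\tilde a^2=n^\Gamma(\tilde\theta^{(1,\rm r)})$, not $n^\Gamma(\tilde\theta^{(2,\rm r)})$ as you wrote; the matching symmetry identity you need is $\tilde\theta^{(1,\rm r)}_1=\theta^{(1,\rm r)}_1$, since $\sr{D}^{(1)}$ is built from $\theta^{(1,\Gamma)}$ and $\tilde\theta^{(1,\rm r)}$. The confusion is understandable given the paper's deliberate swap of face indices relative to Avram--Dai--Hasenbein (superscript $2$ on $a$, $\tilde a$ pairs with superscript $(1,\rm r)$ on $\theta$), but if you proceed with $\tilde\theta^{(2,\rm r)}$ throughout, the Lagrange-condition match will fail. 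With that fixed, your matching is exactly the paper's; your proposed alternative via a two-stage Legendre/minimax duality is not the route the paper takes.
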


\begin{corollary}
  The value function $I^{(i)}(v)$ is a convex function of $v\in \dd{R}^2_+$.
\end{corollary}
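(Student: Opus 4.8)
The plan is to read the claim off \thr{geometric view 1}. That theorem gives, for $i=1,2$ and every $v\in\dd{R}^2_+$, the identity $I^{(i)}(v)=\sup\bigl\{\br{v,\theta};\,\theta\in\sr{D}^{(i)}\bigr\}$. For each fixed $\theta\in\dd{R}^2$ the map $v\mapsto\br{v,\theta}=v_1\theta_1+v_2\theta_2$ is linear, hence convex, on the convex set $\dd{R}^2_+$; and the pointwise supremum of an arbitrary family of convex functions is convex. Therefore $I^{(i)}$ is convex on $\dd{R}^2_+$. This is essentially the whole argument.

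For completeness I would also record that the supremum is finite for every $v\in\dd{R}^2_+$, so that $I^{(i)}$ is a finite (and thus continuous on the interior of $\dd{R}^2_+$) convex function rather than merely an extended-real-valued one. This is immediate from \lem{D1characterization}: in each of the three cases listed there the projection of $\sr{D}^{(1)}$ onto each coordinate axis is bounded above (the relevant bounds being $\tilde\theta^{(1,\rm r)}$, $\theta^{(1,\max)}$ and $\theta^{(2,\max)}$), and symmetrically for $\sr{D}^{(2)}$; since $v$ has nonnegative coordinates, $\br{v,\cdot}$ is bounded above on $\sr{D}^{(i)}$. Finiteness also follows directly from the variational definition (\ref{eqn:sub rate 1}), since a single straight-line path from $0$ to $v$ already has finite cost.

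I do not anticipate any genuine obstacle. The only point requiring mild care is that $\sr{D}^{(i)}$ need not be closed, so the supremum in \thr{geometric view 1} may fail to be attained; this, however, is irrelevant to convexity. If one wished to avoid invoking the geometric formula, an alternative route is a self-contained proof straight from (\ref{eqn:sub rate 1}), concatenating suitably time-rescaled near-optimal paths for $v$ and $v'$ to build a feasible path for $\lambda v+(1-\lambda)v'$ of cost at most $\lambda I^{(i)}(v)+(1-\lambda)I^{(i)}(v')+\varepsilon$, but \thr{geometric view 1} makes this detour unnecessary.
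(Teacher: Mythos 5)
Your proof is correct and is exactly the argument the paper intends: the corollary is stated immediately after Theorem~\ref{thr:geometric view 1} with no separate proof, because $I^{(i)}(v)=\sup\{\br{v,\theta};\theta\in\sr{D}^{(i)}\}$ is a pointwise supremum of linear functions of $v$ and hence convex. Your supplementary remarks on finiteness and on $\sr{D}^{(i)}$ not being closed are accurate but not needed for the claim.
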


\begin{remark} {\rm
\label{rem:geometric view 1b}
Equality  \eqn{geometric view i} corresponds to (44) in Theorem 5.1 of \cite{Miyazawa09} for the two-dimensional reflecting random walk studied in \cite{BorovkovMogulski01}.
}\end{remark}

\begin{figure}[h]
 	\centering
	\includegraphics[height=1.9in]{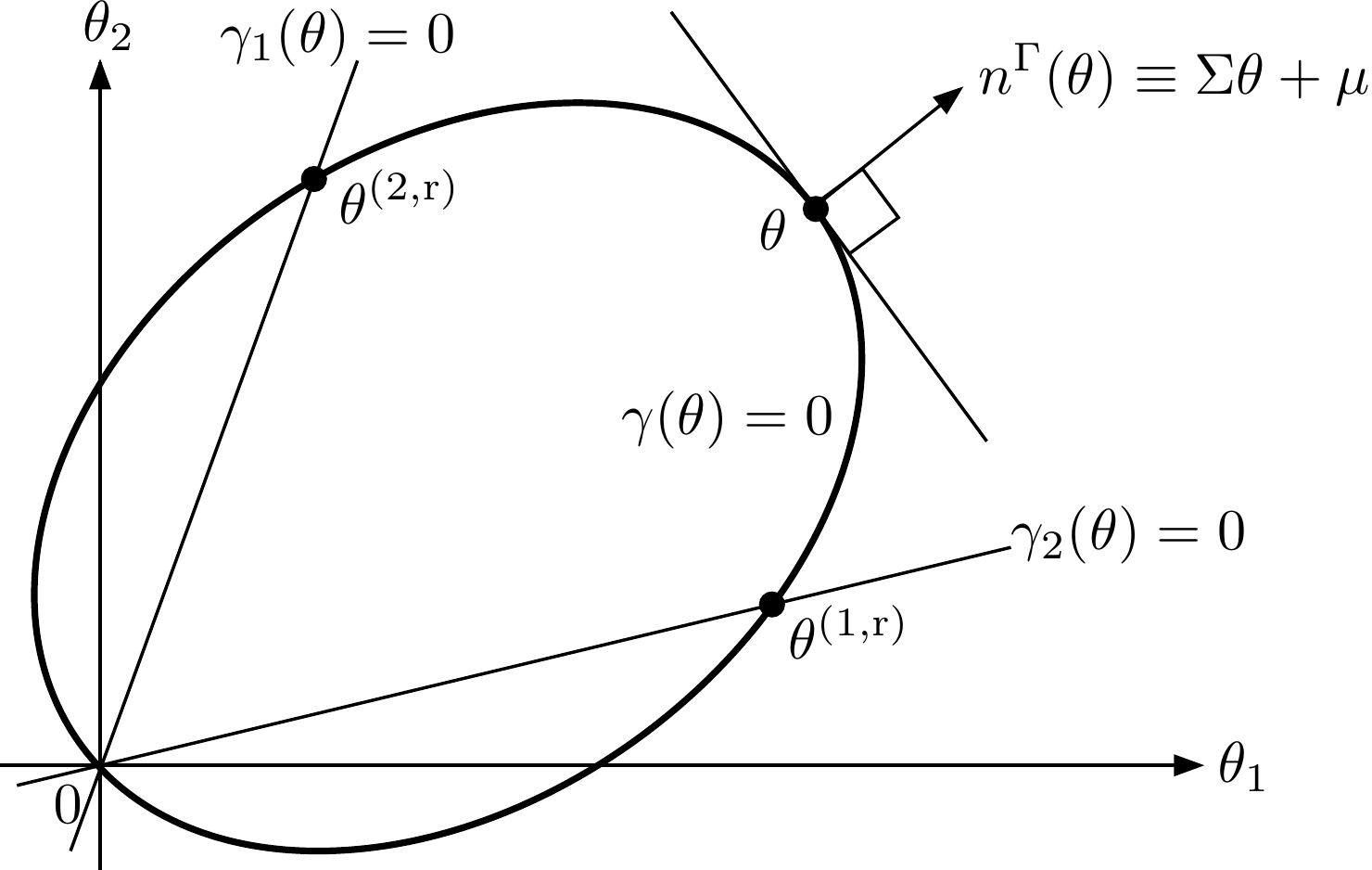}
	\caption{The normal vector $n^\Gamma(\theta)$
          that is orthogonal to the tangent line of the ellipse at
          $\theta\in \partial \Gamma$}
	\label{fig:ellipse-ray-normal}
\end{figure}
For each $\theta\in \dd{R}^2$, define
\begin{eqnarray}
\label{eqn:orthogonal to theta}
  {\rm n}^{\Gamma}(\theta) = \Sigma \theta + \mu.
\end{eqnarray}
It is easy to see that ${\rm n}^{\Gamma}(\theta)$ is the outward normal
direction of the ellipse at $\theta\in \partial \Gamma$. Clearly, 
${\rm n}^{\Gamma}(\theta)$ is 
orthogonal to the tangent line of the ellipse $\partial \Gamma$ at
$\theta$. The normal vector  is 
  illustrated in \fig{ellipse-ray-normal}. 
We now explain how to evaluate 
\begin{equation}
  \label{eq:supinGamma}
  \sup\{\langle v, \theta\rangle; \theta \in \Gamma \}
\end{equation}
that will be used Lemma \ref{lem:geometric} below.

Recall the normal vector $n^\Gamma(\theta)$ for a point
$\theta\in \partial \Gamma$.
The first component of $n^\Gamma(\theta^{(1,\max)})$ is zero  and
the second component of 
$n^\Gamma(\theta^{(2,\max)})$ is zero. Namely, the normal at
$\theta^{(1,\max)}$ is vertical and the normal at $\theta^{(2,\max)}$
is horizontal. Thus, for any $v\in \dd{R}^2_+$, there exists a unique
$\theta^{(v,\Gamma)}\in \partial \Gamma$ in the segment between
$\theta^{(1,\max)}$ 
and $\theta^{(2,\max)}$ such that $n^\Gamma(\theta^{(v,\Gamma)})$
parallels  $v$. Since $\br{v, \theta}$ is the distance from the
  origin to the point that  is
  projected from $\theta$ onto ray $\{t v, t\ge 0\}$, we have
\begin{equation}
  \label{eq:thetavGamma}
  \langle v,\theta^{(v,\Gamma)}\rangle = \sup \{ \br{v, \theta};
  {\theta\in \Gamma}\}.
\end{equation}

 Since $\sr{D}^{(i)}$ is convex and upper bounded,  
the supremum 
\begin{displaymath}
  \sup\{\langle
 v, \theta\rangle; \theta\in \sr{D}^{(i)}\}   
\end{displaymath}
is finite and  is achieved at an extreme
point $\theta^{(v, \sr{D}^{(i)})}$ on the boundary of
$\sr{D}^{(i)}$. Namely, 
\begin{equation}
  \sup\{\langle
 v, \theta\rangle; \theta\in \sr{D}^{(i)}\} = \langle v, \theta^{(v,
   \sr{D}^{(i)})} \rangle.
\label{eq:thetavD}
\end{equation}

\thr{geometric view 1} and Lemma 
\ref{lem:geometric} below give a geometric
interpretation of $I^{(1)}(v)$.
\begin{lemma}\label{lem:geometric}
For each $v\in \dd{R}^2_+$,
\begin{eqnarray}
  \label{eq:gvI1}
\lefteqn{\sup\{\langle
 v, \theta\rangle; \theta\in \sr{D}^{(1)}\} }\\
&=&  \begin{cases}
\langle v, \tilde \theta^{(1,\rm r)}\rangle
 & \text{if $  \theta^{(1,\max)} \not \in \partial \Gamma_1 $, $\tilde
   \theta^{(1,\rm r)}_1 \le  \theta^{(2,\max)}_1$}, \\
\langle v, \tilde \theta^{(1,\rm r)}\rangle  
 & \text{if $  \theta^{(1,\max)} \not \in \partial \Gamma_1 $, $\tilde
   \theta^{(1,\rm r)}_1 >  \theta^{(2,\max)}_1$, $v$ is below
or on $n^\Gamma(\tilde \theta^{(1,\rm r)})$ }, \\
  \sup\{\langle v, \theta\rangle; \theta \in \Gamma \} 
 & \text{if $  \theta^{(1,\max)} \not \in \partial \Gamma_1 $, $\tilde
   \theta^{(1,\rm r)}_1 >  \theta^{(2,\max)}_1$, $v$ is above
   $n^\Gamma(\tilde \theta^{(1,\rm r)})$ },\\
  \sup\{\langle v, \theta\rangle; \theta \in \Gamma \} & \text{if }
  \theta^{(1,\max)} \in \partial \Gamma_1.
  \end{cases}
\nonumber
\end{eqnarray}
\end{lemma}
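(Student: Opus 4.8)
The plan is to combine the explicit description of $\sr{D}^{(1)}$ from \lem{D1characterization} with the elementary fact \eq{thetavGamma} that evaluates $\sup\{\br{v,\theta};\theta\in\Gamma\}$. We split into the three cases of \eq{D1characterization}. In the third case, $\theta^{(1,\max)}\in\partial\Gamma_1$, we have $\sr{D}^{(1)}=\Gamma_{\max}$ by \lem{D1characterization}, so it suffices to observe that $\sup\{\br{v,\theta};\theta\in\Gamma_{\max}\}=\sup\{\br{v,\theta};\theta\in\Gamma\}$ for $v\in\dd{R}^2_+$: the set $\Gamma_{\max}$ is the down-left closure of $\Gamma$ (points dominated by some point of $\Gamma$), and since $v\ge 0$ the linear functional $\br{v,\cdot}$ is nondecreasing in each coordinate, so moving from any $\theta\in\Gamma_{\max}$ up to the dominating $\theta'\in\Gamma$ does not decrease $\br{v,\theta}$. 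This reduces the supremum to $\Gamma$, and then \eq{thetavGamma} finishes it. The fourth line of \eq{gvI1} is exactly this case.

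In the first case of \eq{D1characterization} (i.e. $\theta^{(1,\max)}\notin\partial\Gamma_1$ and $\theta^{(1,\rm r)}_1\le\theta^{(2,\max)}_1$), we have $\sr{D}^{(1)}=\{\theta;\theta<\tilde\theta^{(1,\rm r)}\}$, an orthant-like region with corner $\tilde\theta^{(1,\rm r)}$. For $v\in\dd{R}^2_+$ the supremum of $\br{v,\cdot}$ over $\{\theta\le\tilde\theta^{(1,\rm r)}\}$ is attained at the corner, giving $\br{v,\tilde\theta^{(1,\rm r)}}$; this is the first line of \eq{gvI1}. In the remaining case, $\theta^{(1,\max)}\notin\partial\Gamma_1$ and $\theta^{(1,\rm r)}_1=\tilde\theta^{(1,\rm r)}_1>\theta^{(2,\max)}_1$, we have $\sr{D}^{(1)}=\Gamma_{\max}\cap\{\theta;\theta_1<\tilde\theta^{(1,\rm r)}_1\}$. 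Here I would argue geometrically using the normal vector $n^\Gamma(\tilde\theta^{(1,\rm r)})$: by definition of $n^\Gamma$, the point $\tilde\theta^{(1,\rm r)}$ is the point of $\partial\Gamma$ (equivalently of $\Gamma_{\max}$, by the monotonicity argument above) that maximizes $\br{v,\cdot}$ precisely when $v$ is parallel to $n^\Gamma(\tilde\theta^{(1,\rm r)})$. If $v$ lies below or on $n^\Gamma(\tilde\theta^{(1,\rm r)})$, then the maximizer $\theta^{(v,\Gamma)}$ of $\br{v,\cdot}$ over $\Gamma$ lies on the arc of $\partial\Gamma$ with first coordinate $\le\tilde\theta^{(1,\rm r)}_1$ (since as $v$ rotates downward from $n^\Gamma(\tilde\theta^{(1,\rm r)})$ toward $\theta_1$-axis, $\theta^{(v,\Gamma)}$ moves along $\partial\Gamma$ toward $\theta^{(2,\max)}$, whose first coordinate is $<\tilde\theta^{(1,\rm r)}_1$); hence the vertical cut $\{\theta_1<\tilde\theta^{(1,\rm r)}_1\}$ is not binding and the supremum over $\sr{D}^{(1)}$ equals $\br{v,\tilde\theta^{(1,\rm r)}}$, matching the second line of \eq{gvI1}. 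If instead $v$ is strictly above $n^\Gamma(\tilde\theta^{(1,\rm r)})$, then $\theta^{(v,\Gamma)}$ has first coordinate $<\tilde\theta^{(1,\rm r)}_1$ as well (it lies on the arc toward $\theta^{(1,\max)}$, but since $\tilde\theta^{(1,\rm r)}_1>\theta^{(2,\max)}_1\ge\theta^{(v,\Gamma)}_1$ would fail—here one uses that the arc between $\tilde\theta^{(1,\rm r)}$ and $\theta^{(1,\max)}$ has first coordinate exceeding $\tilde\theta^{(1,\rm r)}_1$, so actually the cut $\theta_1<\tilde\theta^{(1,\rm r)}_1$ excludes that arc); a short check shows the supremum of $\br{v,\cdot}$ over $\Gamma_{\max}\cap\{\theta_1<\tilde\theta^{(1,\rm r)}_1\}$ still equals $\sup\{\br{v,\theta};\theta\in\Gamma\}$ because the maximizer over all of $\Gamma$ already satisfies the cut. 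This gives the third line of \eq{gvI1}.

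The one point that needs care — and the main obstacle — is the case analysis in the last two lines: one must verify that the position of $\theta^{(v,\Gamma)}$ relative to the vertical line $\{\theta_1=\tilde\theta^{(1,\rm r)}_1\}$ is governed exactly by whether $v$ is below/on or above $n^\Gamma(\tilde\theta^{(1,\rm r)})$, and that in the "above" sub-case the unconstrained maximizer $\theta^{(v,\Gamma)}$ itself lies in $\sr{D}^{(1)}$ (so the constraint is inactive), while in the "below/on" sub-case the constrained maximizer is forced to the corner point $\tilde\theta^{(1,\rm r)}$ lying on both $\partial\Gamma$ and the cut. This follows from the monotonicity of the map $v\mapsto\theta^{(v,\Gamma)}$ along $\partial\Gamma$ as $v$ rotates (a consequence of strict convexity of the ellipse), together with the hypothesis $\tilde\theta^{(1,\rm r)}_1>\theta^{(2,\max)}_1$, which places $\tilde\theta^{(1,\rm r)}$ strictly to the right of the top of the ellipse and hence on the "upper-right" arc between $\theta^{(2,\max)}$ and $\theta^{(1,\max)}$ where the parametrization by the direction of $n^\Gamma$ is monotone. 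I would record these monotonicity facts as a preliminary observation (or cite the corresponding discussion around \eq{thetavGamma}) and then the four cases follow by direct inspection of \fig{D1D2CategoryI} and \fig{boththeta1rlowerthantheta1max}.
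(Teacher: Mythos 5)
Your cases 1, 3, and 4 (the orthant corner argument and the reduction to $\Gamma$ via the monotonicity of $\br{v,\cdot}$ on $\Gamma_{\max}$) are fine and match the paper's approach. The problem is in the second case, which you yourself flag as the delicate part: your monotonicity argument has the direction of rotation reversed, and this produces a self-contradiction in the ``below/on'' subcase.

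Concretely: as $v\in\dd{R}^2_+$ rotates from the vertical axis toward the $\theta_1$-axis (i.e.\ becomes more horizontal), the unconstrained maximizer $\theta^{(v,\Gamma)}$ traces the arc of $\partial\Gamma$ from $\theta^{(2,\max)}$ \emph{toward} $\theta^{(1,\max)}$ --- the rightmost point, where the outward normal is horizontal --- not toward $\theta^{(2,\max)}$ as you wrote. Therefore, when $v$ is below $n^\Gamma(\tilde\theta^{(1,\rm r)})$, the point $\theta^{(v,\Gamma)}$ lies on the arc between $\tilde\theta^{(1,\rm r)}$ and $\theta^{(1,\max)}$ and has first coordinate $\ge \tilde\theta^{(1,\rm r)}_1$; it violates the cut $\{\theta_1<\tilde\theta^{(1,\rm r)}_1\}$, so the constraint \emph{is} binding, and it is \emph{because} it is binding that the constrained optimum is driven to the corner $\tilde\theta^{(1,\rm r)}$. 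Your sentence ``the vertical cut is not binding and the supremum over $\sr{D}^{(1)}$ equals $\br{v,\tilde\theta^{(1,\rm r)}}$'' is internally inconsistent: if the cut were not binding, the supremum would be $\br{v,\theta^{(v,\Gamma)}}=\sup\{\br{v,\theta};\theta\in\Gamma\}$, not $\br{v,\tilde\theta^{(1,\rm r)}}$. Symmetrically, when $v$ is above $n^\Gamma(\tilde\theta^{(1,\rm r)})$, the maximizer $\theta^{(v,\Gamma)}$ lies on the arc from $\theta^{(2,\max)}$ to $\tilde\theta^{(1,\rm r)}$, does satisfy the cut, and so the constraint is slack and the supremum over $\sr{D}^{(1)}$ agrees with $\sup\{\br{v,\theta};\theta\in\Gamma\}$. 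In short, you arrive at the right case formulas, but the logical chain justifying them in case 2 is reversed and needs to be rewritten along the lines just sketched (which is essentially what the paper does via the equivalence recorded in (\ref{eq:thatavGammalesstildetheta}) and (\ref{eq:case2})).
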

\begin{proof}
 Lemma \ref{lem:D1characterization} characterizes the set
$\sr{D}^{(1)}$ in three separate cases. We now prove the lemma for
each of the three cases; (i) $\theta^{(1,\max)} \not \in \partial \Gamma_1 $, $\tilde \theta^{(1,\rm r)}_1 \le  \theta^{(2,\max)}_1$, (ii) $\theta^{(1,\max)} \not \in \partial \Gamma_1 $, $\tilde \theta^{(1,\rm r)}_1 >  \theta^{(2,\max)}_1$ and (iii) $\theta^{(1,\max)} \in \partial \Gamma_1 $. In the first case, 
$\theta^{(v, \sr{D}^{(1)})}$ in (\ref{eq:thetavD}) is equal to $\tilde
\theta^{(1,r)}$ because the boundary of $\sr{D}^{(1)}$ consists of two
straight lines with $\tilde\theta^{(1,r)}$ as the unique extreme
point. 
This proves (\ref{eq:gvI1}) for the first case. For the third case,
(\ref{eq:gvI1}) holds because $\sr{D}^{(1)}=\Gamma_{\max}$ and one can
argue that 
\begin{displaymath}
  \langle v,\theta^{(v,\Gamma)}\rangle = \sup \{ \br{v, \theta};
  {\theta\in \Gamma_{\max}}\}
\end{displaymath}
just as proving (\ref{eq:thetavGamma}).
Now consider the second case, which is illustrated in
the left panel of Figure \ref{fig:D1D2CategoryI}.
 One can check that 
\begin{equation}
  \label{eq:thatavGammalesstildetheta}
  \langle v, \theta^{(v, \Gamma)} \rangle \le 
  \langle v, \tilde \theta^{(1, r)} \rangle
\end{equation}
if and only if $v$ is below or on $n^\Gamma(\tilde
\theta^{(1,\rm r)})$. Therefore,
\begin{equation}
  \label{eq:case2}
   \theta^{(v, \sr{D}^{(1)})}=
   \begin{cases}
     \tilde \theta^{(1,r)}  & \text{ if $v$ is below or on $n^\Gamma(\tilde
\theta^{(1,\rm r)})$}, \\
\tilde \theta^{(v,\Gamma)} 
 & \text{ if $v$ is above $n^\Gamma(\tilde
\theta^{(1,\rm r)})$}, 
   \end{cases}
\end{equation}
from which (\ref{eq:gvI1}) holds for the two subcases of the second
case.
\end{proof}

The following characterization of the convergence domain $\sr{D}$
is in a form
different from the one in  \cite{DaiMiyazawa2011a}.
See \fig{domain 1} for an illustration of a convergence domain.
\begin{figure}[h]
 	\centering
	\includegraphics[height=2.0in]{D-Category-I_up} \hspace{5ex}
	\includegraphics[height=2.2in]{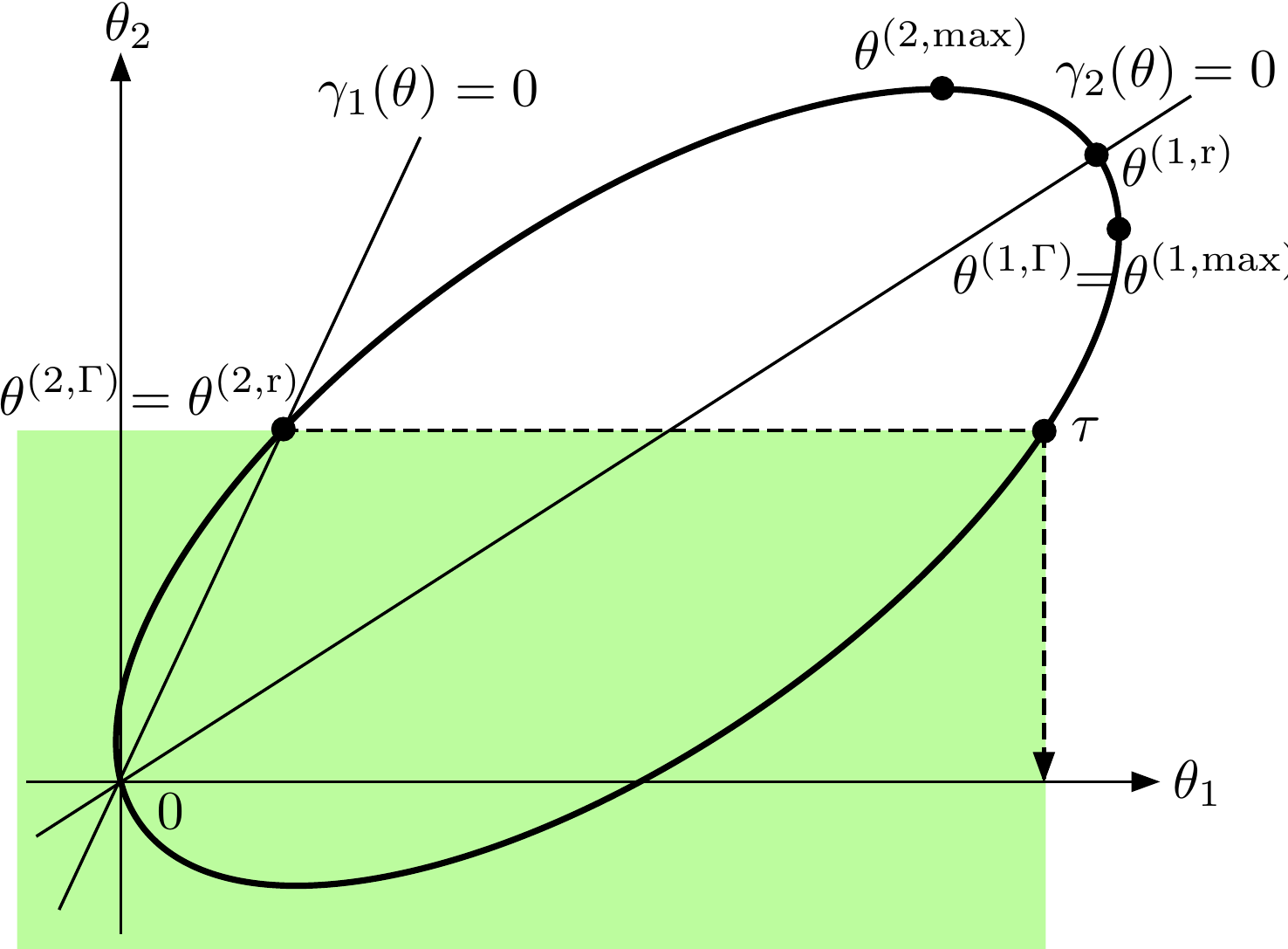} 
	\caption{$\sr{D}=\sr{D}^{(1)}\cap \sr{D}^{(2)}$ (left), where
          $\sr{D}^{(1)}$ and $\sr{D}^{(2)}$ are in 
          \fig{D1D2CategoryI}, and 
          $\sr{D}=\sr{D}^{(2)}$ (right)}
	\label{fig:domain 1}
\end{figure}
Define
\begin{eqnarray}
\label{eqn:tau i}
  \tau_{i} = \sup\{ \theta_{i} > 0; \theta \in \sr{D}_1\cap
  \sr{D}_2\}, \qquad i=1,2. 
\end{eqnarray} 

\begin{theorem}
\label{thr:domain 1}
Assume  conditions \eqn{P matrix} and \eqn{stability 1}. Then, 
(a)
\begin{eqnarray}
\label{eqn:D 1}
  \sr{D} = \sr{D}^{(1)} \cap \sr{D}^{(2)}.
\end{eqnarray}  
(b)
 The large deviations rate function $I$ is obtained as
\begin{eqnarray}
\label{eqn:geometric view 1}
  I(v) = \left\{\begin{array}{ll}
  \min( \brb{v, \tilde{\theta}^{(1, \rm r)}}, \brb{v,\tilde{\theta}^{(2, \rm r)}}), \quad & \tau \in \Gamma,\\
  \sup \{ \br{v, \theta}; \theta \in \sr{D} \}, \quad & \tau \not\in \Gamma,
  \end{array} \right.
\end{eqnarray}
  In particular,
\begin{eqnarray}
\label{eqn:I coordinate 1}
  I({\rm e}^{(i)}) = \tau_{i}, \qquad i=1,2,
\end{eqnarray}
where 
\begin{displaymath}
  {\rm e}^{(1)}=
  \begin{pmatrix}
    1 \\0
  \end{pmatrix}
\quad \text{ and } \quad
{\rm e}^{(2)} =
\begin{pmatrix}
  0 \\ 1
\end{pmatrix}.
\end{displaymath}
\end{theorem}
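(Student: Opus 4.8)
The plan is to prove part (a) first and then derive part (b) essentially as a corollary of part (a) together with Lemma \ref{lem:line optimality 1}, Theorem \thrt{geometric view 1}, and Lemma \ref{lem:geometric}. For part (a), the strategy is to combine the characterization of the convergence domain $\sr{D}$ obtained in \citet{DaiMiyazawa2011a} with the explicit descriptions of $\sr{D}^{(1)}$ and $\sr{D}^{(2)}$ from Lemma \ref{lem:D1characterization}. Concretely, I would recall that in \cite{DaiMiyazawa2011a} the domain $\sr{D}$ is identified as the intersection of $\Gamma_{\max}$ with (at most) two half-plane-type constraints coming from the two rays, where the $i$-th such constraint is active precisely when face $F_{3-i}$ has a boundary influence, i.e.\ precisely when condition \eq{theta1maxinGamma1} (for $i=1$, and its analogue for $i=2$) \emph{fails}. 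The bulk of the work is a careful case analysis: for each $i$, when \eq{theta1maxinGamma1} holds, $\sr{D}^{(i)}=\Gamma_{\max}$ by Lemma \ref{lem:D1characterization} and contributes no constraint beyond $\Gamma_{\max}$, matching the $\sr{D}$ description; when it fails, $\sr{D}^{(i)}$ is either $\{\theta<\tilde\theta^{(i,\rm r)}\}$ or $\Gamma_{\max}\cap\{\theta_i<\tilde\theta^{(i,\rm r)}_i\}$, and one checks in each subcase that intersecting with $\sr{D}^{(3-i)}$ reproduces exactly the domain in \cite{DaiMiyazawa2011a}. Since $\Gamma_{\max}\subseteq\sr{D}^{(1)}$-or-the-relevant-half-plane in the appropriate direction, the intersection $\sr{D}^{(1)}\cap\sr{D}^{(2)}$ collapses to $\Gamma_{\max}$ intersected with the active constraints, which is exactly $\sr{D}$.

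For part (b), I would start from Lemma \ref{lem:line optimality 1}, which gives $I(v)=\min(I^{(1)}(v),I^{(2)}(v))$, and then apply Theorem \thrt{geometric view 1}, which gives $I^{(i)}(v)=\sup\{\br{v,\theta}; \theta\in\sr{D}^{(i)}\}$. Thus $I(v)=\min_i\sup\{\br{v,\theta};\theta\in\sr{D}^{(i)}\}$. The two regimes in \eq{geometric view 1} correspond to whether the point $\tau=(\tau_1,\tau_2)$ (defined in \eq{tau i}) lies in $\Gamma$ or not. When $\tau\notin\Gamma$, the linear functional $\br{v,\cdot}$ over $\sr{D}^{(i)}$ is maximized at a point governed by the $\Gamma_{\max}$ part rather than by a $\tilde\theta^{(i,\rm r)}$ vertex for at least one $i$, and one shows $\min_i\sup_{\sr{D}^{(i)}}\br{v,\theta}=\sup_{\sr{D}^{(1)}\cap\sr{D}^{(2)}}\br{v,\theta}=\sup_{\sr{D}}\br{v,\theta}$ using part (a); the key point is that $\sup$ over an intersection of two convex upper-bounded sets equals the min of the two $\sup$'s when the maximizing directions are ``aligned'' appropriately, which here follows from the nested structure of the $\sr{D}^{(i)}$'s relative to $\Gamma_{\max}$. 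When $\tau\in\Gamma$, each $\sr{D}^{(i)}$ must be of the bounded-polyhedral form with extreme point $\tilde\theta^{(i,\rm r)}$ (the $\Gamma_{\max}$ alternative would push $\tau$ outside $\Gamma$), so Lemma \ref{lem:geometric} gives $\sup\{\br{v,\theta};\theta\in\sr{D}^{(i)}\}=\br{v,\tilde\theta^{(i,\rm r)}}$ in the relevant subcases, and taking the min over $i$ yields the first line of \eq{geometric view 1}. Finally, \eq{I coordinate 1} is the special case $v=\edge^{(i)}$: then $\br{\edge^{(i)},\theta}=\theta_i$, so $I(\edge^{(i)})=\min_j\sup\{\theta_i;\theta\in\sr{D}^{(j)}\}=\sup\{\theta_i;\theta\in\sr{D}^{(1)}\cap\sr{D}^{(2)}\}=\tau_i$, using part (a) and the definition \eq{tau i}.

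The main obstacle I anticipate is the case analysis in part (a): one must match, line by line, the three-case description of each $\sr{D}^{(i)}$ from Lemma \ref{lem:D1characterization} against the description of $\sr{D}$ in \cite{DaiMiyazawa2011a}, handling the geometric subtleties flagged in the text around \eq{theta1rlesstheta1max} and \eq{theta1rrighttheta2max} (in particular, that \eq{theta1maxinGamma1} is \emph{not} the same as \eq{theta1rlesstheta1max}, and that $\tilde\theta^{(1,\rm r)}$ may be higher or lower than $\theta^{(1,\rm r)}$). A secondary obstacle in part (b) is justifying the interchange ``$\min$ of two $\sup$'s $=$ $\sup$ over the intersection'': this is not true for arbitrary convex sets, and one needs to exploit that $\sr{D}^{(1)}$ and $\sr{D}^{(2)}$ both contain $\Gamma_{\max}$ up to a single linear cut in a coordinate direction, so that whichever set is ``binding'' for a given $v$ is binding along the whole maximizing ray — this is exactly the dichotomy $\tau\in\Gamma$ versus $\tau\notin\Gamma$, and spelling out why those are the only two cases (e.g.\ ruling out that the two sets bind along different, incompatible facets) is where care is required.
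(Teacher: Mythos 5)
Your plan follows the same route as the paper for both parts: for (a), match $\sr{D}^{(1)}\cap\sr{D}^{(2)}$, computed from \lem{D1characterization}, against the description of $\sr{D}$ in \cite{DaiMiyazawa2011a} (the paper organizes this by Categories~I--III while you organize it around whether condition~\eq{theta1maxinGamma1} holds for each $i$, but the content is the same), and for (b), combine \lem{line optimality 1}, \thr{geometric view 1} and \lem{geometric} with a dichotomy on $\tau\in\Gamma$ versus $\tau\notin\Gamma$.

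There is one genuine gap in your treatment of the $\tau\in\Gamma$ branch of (b). You claim that $\tau\in\Gamma$ forces each $\sr{D}^{(i)}$ to be the orthant $\{\theta<\tilde\theta^{(i,\rm r)}\}$, so that $\sup_{\sr{D}^{(i)}}\br{v,\theta}=\br{v,\tilde\theta^{(i,\rm r)}}$ for all $v$. This is false: $\tau\in\Gamma$ only rules out $\sr{D}^{(i)}=\Gamma_{\max}$ (the third alternative in \lem{D1characterization}); $\sr{D}^{(i)}$ can still have the mixed form $\Gamma_{\max}\cap\{\theta_i<\tilde\theta^{(i,\rm r)}_i\}$. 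In that case, by \lem{geometric}, $\sup_{\sr{D}^{(1)}}\br{v,\theta}$ equals $\br{v,\tilde\theta^{(1,\rm r)}}$ only when $v$ lies below or on $\mathrm{n}^\Gamma(\tilde\theta^{(1,\rm r)})$, and equals $\sup_\Gamma\br{v,\theta}$ otherwise. The paper therefore splits the $\tau\in\Gamma$ case into three subcases according to whether $v$ lies below $\mathrm{n}^\Gamma(\tilde\theta^{(2,\rm r)})$, between the two normals, or above $\mathrm{n}^\Gamma(\tilde\theta^{(1,\rm r)})$, and in the two extreme subcases one of $I^{(1)}(v),I^{(2)}(v)$ equals $\sup_\Gamma\br{v,\theta}$ rather than $\br{v,\tilde\theta^{(i,\rm r)}}$; one must then check that the minimum nevertheless simplifies to $\min(\br{v,\tilde\theta^{(1,\rm r)}},\br{v,\tilde\theta^{(2,\rm r)}})$ (using that the ordering $\mathrm{n}^\Gamma(\tilde\theta^{(1,\rm r)})$ above $\mathrm{n}^\Gamma(\tilde\theta^{(2,\rm r)})$ holds when $\tau\in\Gamma$). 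Your phrase ``in the relevant subcases'' hints at this but does not close the argument; as written the step would not go through for those $v$ where the supremum reverts to the $\sup_\Gamma$ value.
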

\begin{remark} {\rm
\label{rem:domain 1}
  Theorems \thrt{geometric view 1} and \thrt{domain 1} provide a
  geometrical interpretation for Theorem 3.1 of
  \cite{AvramDaiHasenbein01}. The latter is algebraic. This
  interpretation does not mean that the value function $I(v)$ is
  analytically easier to compute. However, it enables us to see how 
  $I(v)$ is changing with $v$ and how $I(v)$ is influenced by the
  primitive data through the domain $\sr{D}$ and 
  two points $\theta^{(1,\ray)}$ and $\theta^{(2,\ray)}$. 
}\end{remark}

\fig{rate function 1} gives an illustration of the rate function given in~(\ref{eqn:geometric view 1}). The following corollary is immediate.
 Inequality (\ref{eqn:I lower bound}) below is also proved in \cite[Section
 8]{DaiMiyazawa2011a}.

 \begin{figure}[h]
 	\centering
	\includegraphics[height=2.4in]{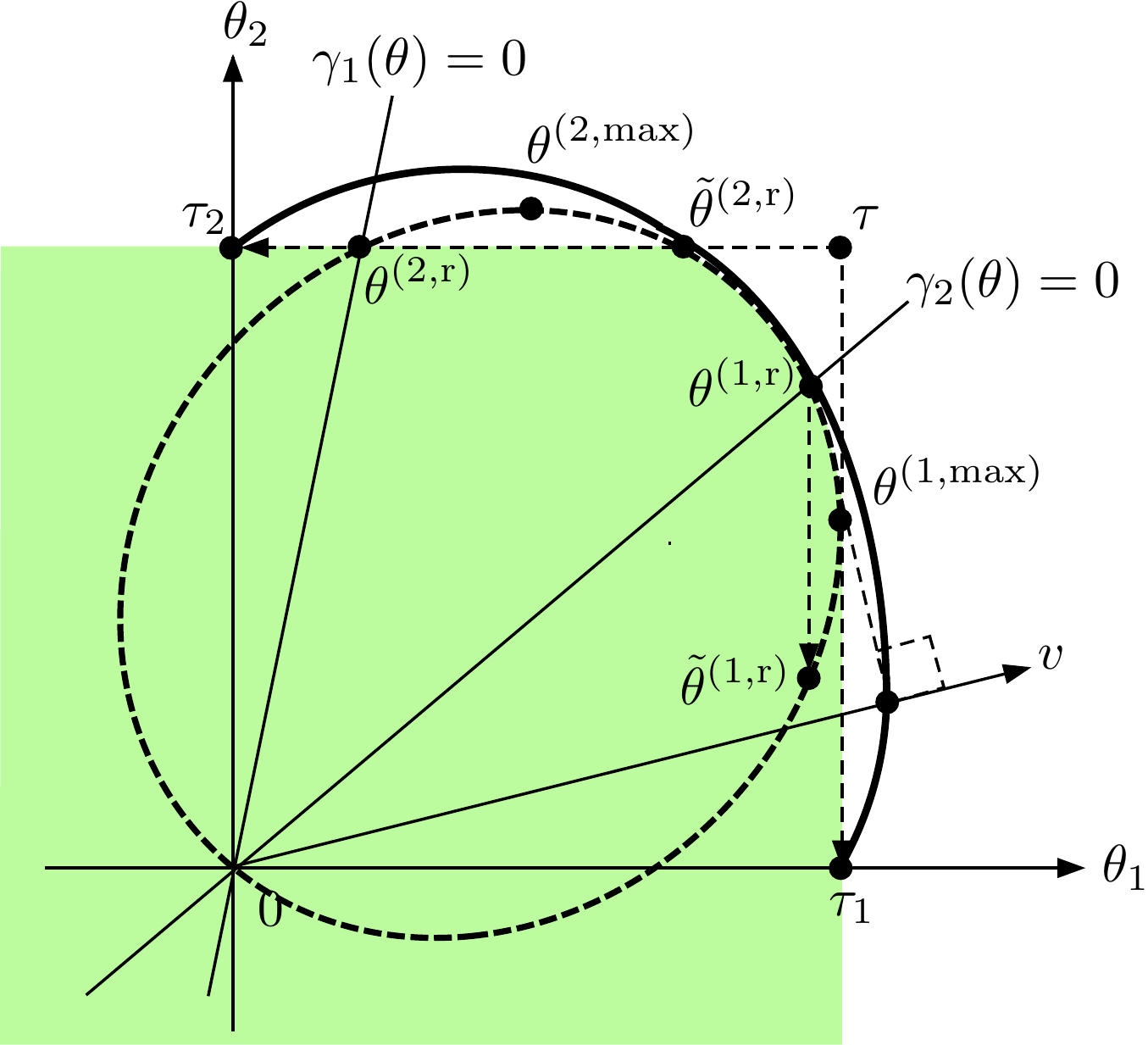} \hspace{5ex}
	\includegraphics[height=2.4in]{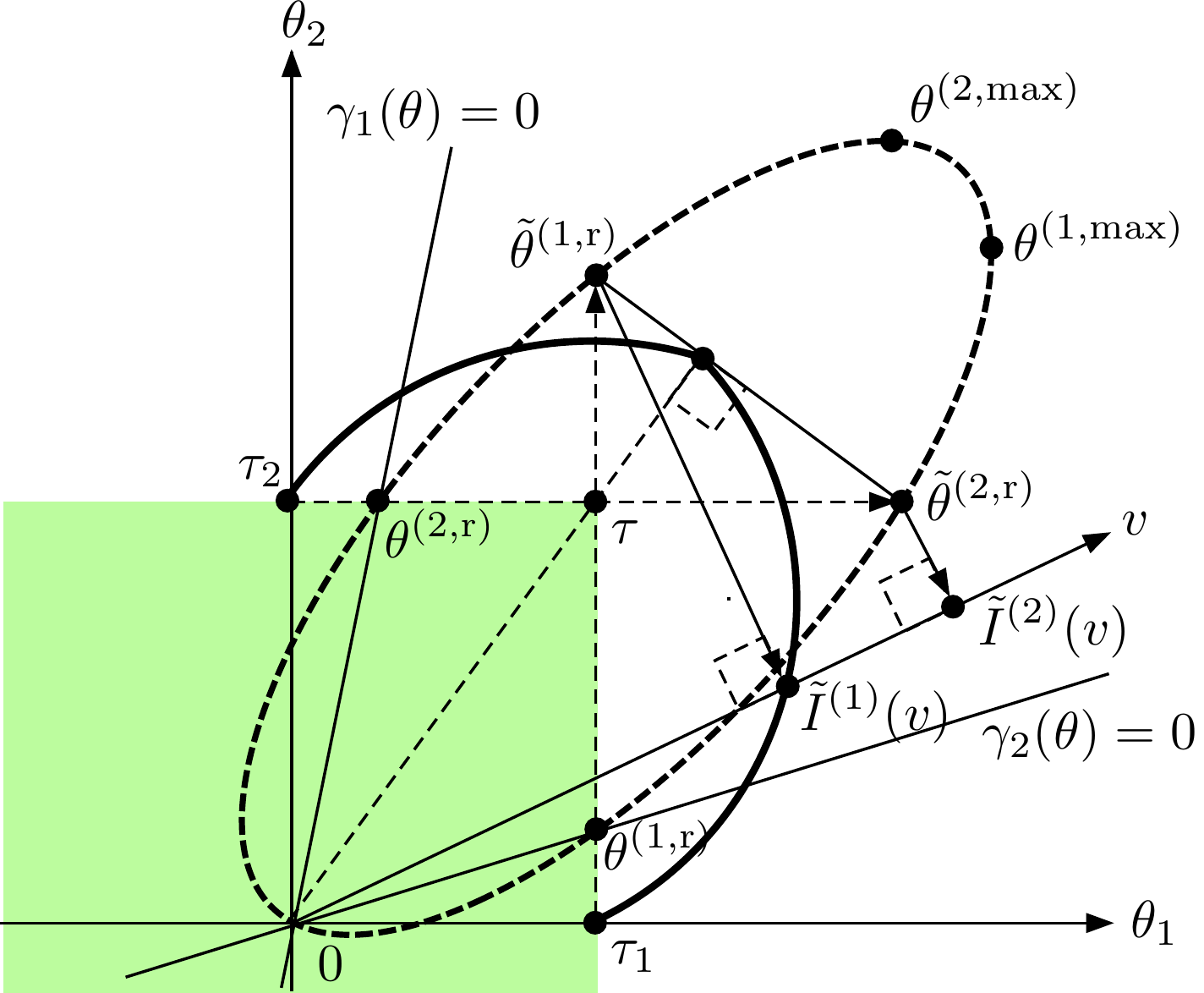}
	\caption{The shapes of the rate functions $I(v)$ with
          $\|v\|=1$ are drawn by thick curves; $\tau$ is inside the
          ellipse in the right panel and is outside in the left panel} 
	\label{fig:rate function 1}
\end{figure}

\begin{corollary} {\rm
\label{rem:geometric view 1a}
For each $v\in \dd{R}^2_+$,
\begin{eqnarray}
\label{eqn:I lower bound}
  I(v) \ge \sup \{ \br{v, \theta}; \theta \in \sr{D} \}.
\end{eqnarray}
}\end{corollary}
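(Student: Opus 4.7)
The corollary follows immediately by chaining three results already stated in the excerpt, so the plan is a short composition argument rather than a fresh calculation.

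My plan is to invoke Theorem \thrt{domain 1}(a) to identify the convergence domain as $\sr{D}=\sr{D}^{(1)}\cap \sr{D}^{(2)}$, which in particular gives the set inclusions $\sr{D}\subseteq \sr{D}^{(i)}$ for both $i=1,2$. Taking a supremum of the linear functional $\theta\mapsto \br{v,\theta}$ over a smaller set can only decrease its value, so for every $v\in\dd{R}^2_+$ and each $i\in\{1,2\}$ we obtain
\begin{equation*}
  \sup\bigl\{\br{v,\theta};\; \theta\in \sr{D}\bigr\}
  \le \sup\bigl\{\br{v,\theta};\; \theta\in \sr{D}^{(i)}\bigr\}.
\end{equation*}

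Next, I apply \thr{geometric view 1}, which identifies the right hand side as $I^{(i)}(v)$ for each $i=1,2$. Finally, \lem{line optimality 1} gives $I(v)=\min(I^{(1)}(v),I^{(2)}(v))$. Taking the minimum over $i\in\{1,2\}$ of both sides of the displayed inequality then yields
\begin{equation*}
  \sup\bigl\{\br{v,\theta};\; \theta\in \sr{D}\bigr\} \le \min\bigl(I^{(1)}(v),I^{(2)}(v)\bigr) = I(v),
\end{equation*}
which is exactly \eqn{I lower bound}.

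There is no real obstacle here, since all heavy lifting has been done in the prior results: \thr{geometric view 1} encodes each $I^{(i)}(v)$ as a supremum of a linear functional, \thr{domain 1}(a) identifies $\sr{D}$ as the intersection, and \lem{line optimality 1} reduces $I(v)$ to the minimum of $I^{(1)}$ and $I^{(2)}$. The only small thing worth checking is monotonicity of the supremum under set inclusion, which is immediate. One could also double-check consistency with the explicit formula \eqn{geometric view 1}: when $\tau\not\in\Gamma$, the inequality becomes an equality, while when $\tau\in\Gamma$ the supremum on the right is achieved at (or below) one of the extreme points $\tilde\theta^{(i,\rm r)}\in\partial\sr{D}^{(i)}$ for $i=1,2$, giving exactly $\min(\br{v,\tilde\theta^{(1,\rm r)}},\br{v,\tilde\theta^{(2,\rm r)}})$.
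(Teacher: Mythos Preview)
Your argument is correct and is exactly the ``immediate'' deduction the paper has in mind: combine \thr{domain 1}(a), the monotonicity of the supremum under the inclusion $\sr{D}\subseteq\sr{D}^{(i)}$, \thr{geometric view 1}, and \lem{line optimality 1}. The closing consistency check against \eqn{geometric view 1} is a nice sanity check but not needed for the proof.
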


\begin{proof}[Proof of \thr{domain 1}]
We first prove (a). Here, we adopt the following three categories, which are introduced
in \cite{DaiMiyazawa2011a}. 
\begin{eqnarray}
&& \text{Category I:\ \  } \quad \theta^{(2,\Gamma)}_{1} < \theta^{(1,\Gamma)}_{1}
\mbox{ and } \theta^{(1,\Gamma)}_{2} < \theta^{(2,\Gamma)}_{2},  \label{eq:cateI} \\
&& \text{Category II: } \quad \theta^{(2,\Gamma)} \le \theta^{(1,\Gamma)},  \label{eq:cateII}\\
&& \text{Category III:} \quad \theta^{(1,\Gamma)} \le \theta^{(2,\Gamma)}.   \label{eq:cateIII}
\end{eqnarray}
It is immediate from the definitions of $\sr{D}^{(1)}$ and $\sr{D}^{(2)}$ that
\begin{eqnarray*}
  \lefteqn{\sr{D}^{(1)} \cap \sr{D}^{(2)}}\\
  &&  = 
\Bigl\{ \theta \in \dd{R}^2; \text{ there exists a }
  \theta'\in \Gamma \text{ such that }  \theta < \theta', \theta_{1}' < \theta^{(1,\Gamma)}_{1} \text{ and }  \theta_{2}' < \theta^{(2,\Gamma)}_{2} \Bigr\}.
\end{eqnarray*}
Hence, $\theta \in \sr{D}^{(1)} \cap \sr{D}^{(2)}$ implies that
\begin{equation}
  \label{eq:thetalessthetaGammas}
 \theta < \bigl(\theta^{(1,\Gamma)}_{1}, \theta^{(2,\Gamma)}_{2}\bigr)^T
\end{equation}
for all categories. For Category~I, (\ref{eq:thetalessthetaGammas})
and $\theta \in \Gamma_{\max}$ implies $\theta \in
\sr{D}^{(1)} \cap \sr{D}^{(2)}$. For
Category II, $\theta^{(2,\Gamma)} \le \theta^{(1,\Gamma)}$ implies
that $\theta^{(2,\Gamma)}_{2} < \theta^{(2,\max)}_{2}$, and therefore
$\theta^{(2,\max)} \not\in \Gamma_{2}$. Hence, 
$\sr{D}^{(1)} \cap
\sr{D}^{(2)}$ is equal to $
\{\theta \in \dd{R}^{2}; \theta <
\tilde{\theta}^{(2,\rm r)} \}$ 
 by \lem{D1characterization}. Thus, we
have the following expression by the symmetry of Categories II and
III. 
\begin{eqnarray*}
  \sr{D}^{(1)} \cap \sr{D}^{(2)} = \left\{\begin{array}{ll}
\Bigl  \{\theta \in \Gamma_{\max}; \theta < \bigl(\theta^{(1,\Gamma)}_{1},
  \theta^{(2,\Gamma)}_{2}\bigr)^T\Bigr \}, \quad & \mbox{for Category I},\\ 
  \{\theta \in \dd{R}^{2}; \theta < \tilde{\theta}^{(2,\rm r)} \}, \quad & \mbox{for Category II},\\
  \{\theta \in \dd{R}^{2}; \theta < \tilde{\theta}^{(1,\rm r)} \}, \quad & \mbox{for Category III}.
  \end{array} \right.
\end{eqnarray*}
This implies that
\begin{eqnarray}
\label{eqn:tau}
  \tau = \left\{\begin{array}{ll}
  \bigl(\theta^{(1,\Gamma)}_{1}, \theta^{(2,\Gamma)}_{2}\bigr)^T, \quad & \mbox{for in Category I} ,\\
  \tilde{\theta}^{(2,\rm r)}, \quad & \mbox{for Category II} ,\\
  \tilde{\theta}^{(1,\rm r)}, \quad & \mbox{for Category III} .
  \end{array} \right.
\end{eqnarray}
Thus, the $\tau$ is identical with the one that is obtained in \cite{DaiMiyazawa2011a}. Since Theorem 2.1 of \cite{DaiMiyazawa2011a} says that 
\begin{eqnarray}
\label{eqn:D 2}
  \sr{D} = \{ \theta \in \Gamma_{\max}; \theta < \tau \},
\end{eqnarray}
  \eqn{D 1} is proved. We next prove part (b). For this, we consider
  the two cases, $\tau \in \Gamma$ and $\tau \not\in \Gamma$,
  separately. First assume that $\tau \in \Gamma$ (see, e.g., the left
  picture of \fig{boththeta1rlowerthantheta1max}). In this case,
  ${\rm n}^{\Gamma}(\tilde{\theta}^{(1, \rm r)})$ is above
  ${\rm n}^{\Gamma}(\tilde{\theta}^{(2, \rm r)})$, and therefore we have the
  following three cases for nonzero $v \in \dd{R}_{+}^{2}$ by
  \lem{geometric} and its symmetric version for $\sr{D}^{(2)}$. If $v$ is
  below ${\rm n}^{\Gamma}(\tilde{\theta}^{(2, \rm r)})$ or on
  ${\rm n}^{\Gamma}(\tilde{\theta}^{(2, \rm r)})$, then 
\begin{eqnarray*}
  I^{(1)}(v) = \brb{v, \tilde{\theta}^{(1, \rm r)}} \mbox{ and } I^{(2)}(v) = \sup\{ \brb{v, \theta}; \theta \in \Gamma\}.
\end{eqnarray*}
If $v$ is between ${\rm n}^{\Gamma}(\tilde{\theta}^{(2, \rm r)})$ and ${\rm n}^{\Gamma}(\tilde{\theta}^{(1, \rm r)})$, then
\begin{eqnarray*}
  I^{(1)}(v) = \brb{v, \tilde{\theta}^{(1, \rm r)}} \mbox{ and } I^{(2)}(v) = \brb{v, \tilde{\theta}^{(2, \rm r)}}.
\end{eqnarray*}
If $v$ is above ${\rm n}^{\Gamma}(\tilde{\theta}^{(1, \rm r)})$ or on ${\rm n}^{\Gamma}(\tilde{\theta}^{(1, \rm r)})$, then
\begin{eqnarray*}
  I^{(1)}(v) = \sup\{ \brb{v, \theta}; \theta \in \Gamma\} \mbox{ and } I^{(2)}(v) = \brb{v, \tilde{\theta}^{(2, \rm r)}}.
\end{eqnarray*}
  Thus, we have \eqn{geometric view 1} for $\tau \in \Gamma$ because $I^{(1)}(v) \le I^{(2)}(v)$ for the first case, and $I^{(2)}(v) \le I^{(1)}(v)$ for the third case.
  
  We next suppose that $\tau \not\in \Gamma$ (see, e.g.,
  \fig{D1D2CategoryI}). In this case, ${\rm n}^{\Gamma}(\tilde{\theta}^{(1,
    \rm r)})$ is not above ${\rm n}^{\Gamma}(\tilde{\theta}^{(2, \rm r)})$,
  and we similarly have the following three cases. If $v$ is below
  ${\rm n}^{\Gamma}(\tilde{\theta}^{(1, \rm r)})$ or on
  ${\rm n}^{\Gamma}(\tilde{\theta}^{(1, \rm r)})$, then 
\begin{eqnarray*}
  I^{(1)}(v) = \brb{v, \tilde{\theta}^{(1, \rm r)}} \mbox{ and } I^{(2)}(v) = \sup\{ \brb{v, \theta}; \theta \in \Gamma\}.
\end{eqnarray*}
If $v$ is between ${\rm n}^{\Gamma}(\tilde{\theta}^{(1, \rm r)})$ and ${\rm n}^{\Gamma}(\tilde{\theta}^{(2, \rm r)})$, then
\begin{eqnarray*}
  I^{(1)}(v) = I^{(2)}(v) = \sup\{ \brb{v, \theta}; \theta \in \Gamma\}.
\end{eqnarray*}
If $v$ is above ${\rm n}^{\Gamma}(\tilde{\theta}^{(2, \rm r)})$ or on ${\rm n}^{\Gamma}(\tilde{\theta}^{(2, \rm r)})$, then 
\begin{eqnarray*}
  I^{(1)}(v) = \sup\{ \brb{v, \theta}; \theta \in \Gamma\} \mbox{ and } I^{(2)}(v) = \brb{v, \tilde{\theta}^{(2, \rm r)}}.
\end{eqnarray*}
  In the first case, $I^{(1)}(v) \le I^{(2)}(v)$, and $I^{(1)}(v) = \sup\{ \brb{v, \theta}; \theta \in \sr{D}\}$, while in the third case $I^{(2)}(v) \le I^{(1)}(v)$, and $I^{(2)}(v) = \sup\{ \brb{v, \theta}; \theta \in \sr{D}\}$. Hence, we always have
\begin{eqnarray*}
  \min(I^{(1)}(v), I^{(2)}(v)) = \sup\{ \brb{v, \theta}; \theta \in \sr{D}\}.
\end{eqnarray*}
Thus, we obtain \eqn{geometric view 1} for $\tau \not\in \Gamma$.

We finally prove \eqn{I coordinate 1}. This is immediate from \eqn{geometric view 1}, \eqn{tau i} and \eqn{D 1} if $\tau \not\in \Gamma$. Otherwise, $I(v) = \min( \brb{v, \tilde{\theta}^{(1, \rm r)}}, \brb{v,\tilde{\theta}^{(2, \rm r)}})$ by \eqn{geometric view 1}. This implies \eqn{I coordinate 1} since $\brb{{\rm e}^{(1)}, \tilde{\theta}^{(1, \rm r)}} < \brb{{\rm e}^{(1)},\tilde{\theta}^{(2, \rm r)}}$ and $\brb{{\rm e}^{(2)}, \tilde{\theta}^{(2, \rm r)}} < \brb{{\rm e}^{(2)},\tilde{\theta}^{(1, \rm r)}}$ for $\tau \in \Gamma$. This completes the proof.
\end{proof}

\section{Characterization by Avram, Dai and Hasenbein}
\label{sect:ADH characterization}
\setnewcounter

Theorem 6.3 of \cite{AvramDaiHasenbein01} gives explicit expressions
for $I^{(i)}(v)$ for each $v\in \dd{R}^2_+$. We will use these expressions
to prove \thr{geometric view 1}. For concreteness, we will
prove \thr{geometric view 1} for $i=1$. Recall $\theta^{(v, \Gamma)}$
defined in (\ref{eq:thetavGamma}).
The next lemma gives an analytical expression for $  \langle
v,\theta^{(v,\Gamma)}\rangle$.
\begin{lemma}
\label{lem:nGammaProperty}
For each $v\in \dd{R}^2_+ $, 
\begin{equation}
  \label{eqn:I 0a}
  \br{v, \theta^{(v,\Gamma)}} =
  \sqrt{\br{\mu, \Sigma^{-1} \mu} \br{v, \Sigma^{-1} v}} - \br{\mu,
    \Sigma^{-1} v}.
\end{equation}
\end{lemma}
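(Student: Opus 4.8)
The plan is to reduce the statement to an elementary constrained optimization. By the display (\ref{eq:thetavGamma}) established just before the lemma, $\langle v,\theta^{(v,\Gamma)}\rangle = \sup\{\langle v,\theta\rangle;\ \theta\in\Gamma\}$, so it suffices to evaluate this supremum. Since $\Sigma$ is a nonsingular covariance matrix it is positive definite, hence $\overline\Gamma=\Gamma\cup\partial\Gamma=\{\theta;\ \gamma(\theta)\ge 0\}$ is a compact convex set (a filled ellipse through the origin), the supremum is finite, and it is attained on the boundary $\partial\Gamma=\{\theta;\ \gamma(\theta)=0\}$; for $v=0$ both sides of (\ref{eqn:I 0a}) are $0$, so I assume $v\neq 0$.

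Next I would complete the square in $\gamma$. Setting $\psi=\theta+\Sigma^{-1}\mu$, one has $-\gamma(\theta)=\tfrac12\langle\psi,\Sigma\psi\rangle-\tfrac12\langle\mu,\Sigma^{-1}\mu\rangle$, so the constraint $\gamma(\theta)\ge 0$ reads $\langle\psi,\Sigma\psi\rangle\le M$ with $M:=\langle\mu,\Sigma^{-1}\mu\rangle>0$ (positivity of $M$ uses that $\Sigma^{-1}$ is positive definite and that $\mu\neq 0$, which is forced by the stability condition (\ref{eqn:stability 1})), while the objective becomes $\langle v,\theta\rangle=\langle v,\psi\rangle-\langle\mu,\Sigma^{-1}v\rangle$. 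Then I would apply the Cauchy--Schwarz inequality through the factorization $\langle v,\psi\rangle=\langle\Sigma^{-1/2}v,\Sigma^{1/2}\psi\rangle\le\|\Sigma^{-1/2}v\|\,\|\Sigma^{1/2}\psi\|=\sqrt{\langle v,\Sigma^{-1}v\rangle}\,\sqrt{\langle\psi,\Sigma\psi\rangle}\le\sqrt{M\langle v,\Sigma^{-1}v\rangle}$, with equality exactly when $\psi$ is the positive scalar multiple of $\Sigma^{-1}v$ satisfying $\langle\psi,\Sigma\psi\rangle=M$. Such a $\psi$ exists (as $v\neq 0$), and the corresponding $\theta=\psi-\Sigma^{-1}\mu$ has $n^\Gamma(\theta)=\Sigma\theta+\mu=\Sigma\psi$ a positive multiple of $v$, so this maximizer is precisely $\theta^{(v,\Gamma)}$. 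Substituting the bound back into $\langle v,\theta\rangle=\langle v,\psi\rangle-\langle\mu,\Sigma^{-1}v\rangle$ yields (\ref{eqn:I 0a}).

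There is no substantive obstacle here; the only points deserving care are (i) justifying that the supremum over the open region $\Gamma$ is attained on $\partial\Gamma$, and (ii) selecting the correct, i.e. positive, sign in the Cauchy--Schwarz equality case, which is pinned down by the requirement that $n^\Gamma(\theta^{(v,\Gamma)})$ point in the direction $+v$ (the outward normal at the maximizer of a linear functional), consistent with the discussion preceding the lemma. As an alternative, the same computation can be organized as a Lagrange-multiplier argument: at the maximizer one has $v=\lambda\,n^\Gamma(\theta)=\lambda(\Sigma\theta+\mu)$ for some $\lambda>0$, hence $\theta=\lambda^{-1}\Sigma^{-1}v-\Sigma^{-1}\mu$; imposing $\gamma(\theta)=0$ gives $\lambda=\sqrt{\langle v,\Sigma^{-1}v\rangle/\langle\mu,\Sigma^{-1}\mu\rangle}$, and then $\langle v,\theta\rangle=\lambda^{-1}\langle v,\Sigma^{-1}v\rangle-\langle\mu,\Sigma^{-1}v\rangle$ again produces (\ref{eqn:I 0a}). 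I would present the Cauchy--Schwarz version as the main line since it also transparently re-derives (\ref{eq:thetavGamma}).
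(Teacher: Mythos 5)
Your proposal is correct. Your main line (complete the square in $\gamma$, substitute $\psi=\theta+\Sigma^{-1}\mu$, and apply Cauchy--Schwarz to $\langle \Sigma^{-1/2}v,\Sigma^{1/2}\psi\rangle$) is a valid and self-contained way to evaluate $\sup_{\theta\in\Gamma}\langle v,\theta\rangle$, and it has the modest advantage of proving the equality case from scratch rather than taking the geometric normal-vector picture for granted. The paper's proof is precisely your ``alternative'' Lagrange-multiplier organization: starting from the fact that at the maximizer $v$ is parallel to $n^\Gamma(\theta)=\Sigma\theta+\mu$, it writes $\Sigma\theta+\mu=\alpha v$ with $\alpha>0$, substitutes $\theta=\Sigma^{-1}(\alpha v-\mu)$ into $\gamma(\theta)=0$ to get $\alpha=\sqrt{\langle\mu,\Sigma^{-1}\mu\rangle/\langle v,\Sigma^{-1}v\rangle}$, and reads off $\langle v,\theta\rangle$. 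Both are the same underlying computation; your Cauchy--Schwarz version packages the optimality argument together with the algebra and also gives a cleaner justification that the supremum over the open set $\Gamma$ is attained on $\partial\Gamma$, while the paper's version is shorter because it leans on the geometric description of $\theta^{(v,\Gamma)}$ developed in the preceding paragraph.
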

\begin{proof}
 Obviously, the right side of (\ref{eq:thetavGamma}) is attained by the $\theta$ on the ellipse
 $\partial \Gamma$ such that $v$ parallels  the normal
 $n^\Gamma(\theta)$ at the $\theta$. That is, $v$ is proportional to
 ${\rm n}^{\Gamma}(\theta)$. Thus, following \eqn{orthogonal to theta}, we have
 for some $\alpha > 0$, 
\begin{eqnarray*}
  \Sigma \theta + \mu = \alpha v.
\end{eqnarray*}
 Substituting $\theta = \Sigma^{-1}(\alpha v - \mu)$ in $\gamma(\theta) = 0$, we have
\begin{eqnarray*}
  \alpha = \sqrt{\frac {\br{\Sigma^{-1} \mu, \mu}}{\br{\Sigma^{-1}v, v}}}.
\end{eqnarray*}
  Hence,
\begin{eqnarray*}
  \br{v, \theta} = \br{\Sigma^{-1}(\alpha v - \mu), v} = \sqrt{\br{\mu, \Sigma^{-1} \mu} \br{v, \Sigma^{-1} v}} - \br{ \Sigma^{-1} \mu, v}.
\end{eqnarray*}
  This proves (\ref{eqn:I 0a}).
\end{proof}

Central in the proofs of \citet{AvramDaiHasenbein01} is two vectors
$a^2$ and  $\tilde a^2$ 
defined in (3.2) and (3.4), respectively, of their paper. They call
$\tilde a^2$  the symmetry 
of $a^2$ around face $F_2=\{z=(z_1, z_2)\in \dd{R}^2_+; z_2=0
\}$, namely the nonnegative horizontal axis. (Readers are warned again
the swap of indexes between \cite{AvramDaiHasenbein01} and this
paper. Anything pertaining to the 
horizontal axis is indexed as $1$, with a pair of parentheses, in this
paper,  and is indexed as  $2$ in \cite{AvramDaiHasenbein01}.)

It follows from (\ref{eqn:ray and p 1}) and (\ref{eqn:orthogonal to
  theta}) of this paper and (3.2) of \cite{AvramDaiHasenbein01} that one
immediately has the following geometric interpretation
\begin{equation}
  \label{eq:a2normaltheta1r}
  a^2 = n^\Gamma(\theta^{(1,\rm r)}). 
\end{equation}
(Note that our $\theta^{(1,\rm r)}$ in  (\ref{eqn:ray and p 1}) has
denominator $\langle p^{(1)}, \Sigma p^{(1)}\rangle$ because our $p^{(1)}$ is
not normalized.) To introduce their symmetry $\tilde a^2$ using their
(3.4) in \cite{AvramDaiHasenbein01}, we introduce 
\begin{equation}
  \label{eq:eandn}
  {\rm e}^2=
  \begin{pmatrix}
    1\\ 0
  \end{pmatrix} \quad \text{ and } n^2 =
  \begin{pmatrix}
    0 \\ 1
  \end{pmatrix}.
\end{equation}
One can check that the vector $a^2$ (or any other vector) has the
following orthogonal 
decomposition under inner product $\langle \Sigma^{-1}x, y\rangle$ for
$x, y\in \dd{R}^2$.
\begin{equation}
  \label{eq:a2decomposition}
 a^2 =\frac{ \langle \Sigma^{-1} a^2, {\rm e}^2\rangle }
{ \langle \Sigma^{-1} {\rm e}^2, {\rm e}^2\rangle } {\rm e}^2 +
  \frac{ \langle  a^2, n^2\rangle }
{ \langle \Sigma n^2, n^2\rangle} \Sigma n^2 .
\end{equation}
The dominators in (\ref{eq:a2decomposition}) are  due to the lacking
of normalization in  (\ref{eq:eandn}).
Their (3.4) defines $\tilde a^2$ via
\begin{equation}
  \label{eq:symmetrya2}
  \tilde a^2 =\frac{ \langle \Sigma^{-1} a^2, {\rm e}^2\rangle }
{ \langle \Sigma^{-1} {\rm e}^2, {\rm e}^2\rangle } {\rm e}^2 - 
  \frac{ \langle  a^2, n^2\rangle }
{ \langle \Sigma n^2, n^2\rangle} \Sigma n^2 .
\end{equation}
The following equation gives the geometric interpretation of $\tilde a^2$.
\begin{equation}
  \label{eq:tildea2normaltildetheta1r}
  \tilde a^2 = n^\Gamma (\tilde \theta^{(1,\rm r)}).
\end{equation}
We leave the proof of (\ref{eq:tildea2normaltildetheta1r}) to the end
of this section. 

Following Definition 3.2 and
equation (3.5) of \cite{AvramDaiHasenbein01}, the face $F_2$ is
reflective if and 
only if 
\begin{equation}
  \label{eq:1}
  \tilde a^{2}_2>0.
\end{equation}
The following lemma is immediate.
\begin{lemma}
Face $F_2$   is reflective if and only if condition
(\ref{eq:theta1maxinGamma1}) is not satisfied.
\end{lemma}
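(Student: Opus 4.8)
The plan is to translate the reflectivity condition \eqref{eq:1} into the geometric condition \eqref{eq:theta1maxinGamma1} by using the geometric interpretation \eqref{eq:tildea2normaltildetheta1r} of $\tilde a^2$ already established (modulo the proof deferred to the end of the section). Concretely, I would show that $\tilde a^2_2 \le 0$ is equivalent to $\theta^{(1,\max)} \in \partial\Gamma_1$. By \eqref{eq:tildea2normaltildetheta1r}, $\tilde a^2 = n^\Gamma(\tilde\theta^{(1,\rm r)}) = \Sigma\tilde\theta^{(1,\rm r)} + \mu$, so the sign of $\tilde a^2_2$ is the sign of the second coordinate of the outward normal to the ellipse at the point $\tilde\theta^{(1,\rm r)}$. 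Recalling that $n^\Gamma(\theta^{(2,\max)})$ is horizontal (its second component vanishes) and that, as one moves counterclockwise along $\partial\Gamma$ past $\theta^{(2,\max)}$ toward $\theta^{(1,\max)}$, the second component of the normal becomes negative, the condition $\tilde a^2_2 \le 0$ says precisely that $\tilde\theta^{(1,\rm r)}$ lies on the arc of $\partial\Gamma$ at or below $\theta^{(2,\max)}$ on the right side — equivalently, $\tilde\theta^{(1,\rm r)}$ is not higher than $\theta^{(1,\rm r)}$.

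The key step is then to match this with the definition of $\tilde\theta^{(1,\rm r)}$ and of $\partial\Gamma_1$. Recall from \eqref{eq:thetaSymmetry} that $\tilde\theta^{(1,\rm r)}$ is the point on $\partial\Gamma$ with the same first coordinate as $\theta^{(1,\rm r)}$ but (when $\theta^{(1,\rm r)}\ne\theta^{(1,\max)}$) the other second coordinate; of the two points on $\partial\Gamma$ with first coordinate $\theta^{(1,\rm r)}_1$, one is above and one is below the horizontal line through the center of the ellipse, so "$\tilde\theta^{(1,\rm r)}$ is not higher than $\theta^{(1,\rm r)}$" is the statement that $\theta^{(1,\rm r)}$ is the upper of the two. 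I would then invoke the remark already made in the text immediately after \eqref{eq:thetaSymmetry} and \eqref{eq:theta1maxinGamma1} — "Under condition \eqref{eq:theta1maxinGamma1}, $\tilde\theta^{(1,\rm r)}$ is always at most as high as $\theta^{(1,\rm r)}$" — together with its converse: when \eqref{eq:theta1maxinGamma1} fails, i.e. $\theta^{(1,\max)}\notin\partial\Gamma_1$, the ray $\gamma_2(\theta)=0$ crosses the ellipse to the left of $\theta^{(1,\max)}$, which forces $\theta^{(1,\rm r)}$ to be the lower intersection point and hence $\tilde\theta^{(1,\rm r)}$ to be strictly higher. Assembling these equivalences gives: $F_2$ reflective $\iff \tilde a^2_2 > 0 \iff \tilde\theta^{(1,\rm r)}$ strictly higher than $\theta^{(1,\rm r)} \iff \theta^{(1,\max)}\notin\partial\Gamma_1$, which is the claim.

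The main obstacle — and the part requiring the most care — is the "geometric bookkeeping" about which arc of the ellipse a point lies on, i.e. rigorously pinning down the correspondence between the sign of the second coordinate of $n^\Gamma(\theta)$ and the position of $\theta$ relative to $\theta^{(1,\max)}$ and $\theta^{(2,\max)}$ on $\partial\Gamma$. This is a convexity argument: $\gamma$ is strictly concave on the closed ellipse's interior, $\partial\Gamma$ is a strictly convex curve, and the normal direction rotates monotonically as one traverses it, so the component $(n^\Gamma(\theta))_2 = (\Sigma\theta+\mu)_2$ changes sign exactly once on the relevant arc, at $\theta^{(2,\max)}$. I would phrase this cleanly by noting $(n^\Gamma(\theta))_2 = \langle n^2, \Sigma\theta+\mu\rangle$ and observing that $\theta^{(2,\max)}$ is the unique maximizer of $\theta_2$ over $\partial\Gamma$, so the supporting line there is horizontal; points on $\partial\Gamma$ on the $\theta^{(1,\max)}$-side of $\theta^{(2,\max)}$ have outward normal pointing downward. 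One should also handle the boundary/degenerate cases ($\theta^{(1,\rm r)} = \theta^{(1,\max)}$, equivalently $\tilde a^2_2 = 0$) so that the "if and only if" is exact at the threshold; in that case $\tilde\theta^{(1,\rm r)} = \theta^{(1,\max)} \in \partial\Gamma_1$ trivially and $F_2$ is (by the convention in \eqref{eq:1}) not reflective, consistent with the statement.
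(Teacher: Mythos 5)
Your overall plan — translate $F_2$ reflective into $\tilde a^2_2>0$ via (\ref{eq:1}), use $\tilde a^2=n^\Gamma(\tilde\theta^{(1,\mathrm{r})})$ from (\ref{eq:tildea2normaltildetheta1r}), and then relate the sign of the second coordinate of the normal to the relative heights of $\theta^{(1,\mathrm{r})}$ and $\tilde\theta^{(1,\mathrm{r})}$ — is the right one (the paper gives no proof; it calls the lemma immediate from exactly this preceding material). But two of your geometric steps do not hold up.

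First, the pivot of the sign change of $(n^\Gamma(\theta))_2$ is at $\theta^{(1,\max)}$, not at $\theta^{(2,\max)}$. At the rightmost point $\theta^{(1,\max)}$ the tangent to $\partial\Gamma$ is vertical, hence the outward normal is \emph{horizontal} and its second component vanishes; at the highest point $\theta^{(2,\max)}$ the normal is vertical (first component vanishes). The sentence in the paper immediately before (\ref{eq:thetavGamma}) has these two statements swapped, and you appear to have copied that swap. Your final claim ``$\tilde a^2_2\le 0 \iff \tilde\theta^{(1,\mathrm{r})}$ not higher than $\theta^{(1,\mathrm{r})}$'' is nonetheless correct, and the cleanest way to see it avoids arcs and $\theta^{(2,\max)}$ entirely: since $\Sigma_{22}>0$, the map $\theta_2\mapsto(\Sigma\theta+\mu)_2$ is strictly increasing along any vertical line, and the two intersections of the vertical line $\theta_1=\theta^{(1,\mathrm{r})}_1$ with $\partial\Gamma$ lie one on each side of the diameter $\{(\Sigma\theta+\mu)_2=0\}$ through $\theta^{(1,\max)}$ and the leftmost point; hence the higher of the two has positive normal second component and the lower one negative.

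Second, the converse direction is not justified. You argue that if $\theta^{(1,\max)}\notin\partial\Gamma_1$ then ``the ray $\gamma_2(\theta)=0$ crosses the ellipse to the left of $\theta^{(1,\max)}$, which forces $\theta^{(1,\mathrm{r})}$ to be the lower intersection point.'' But $\theta^{(1,\mathrm{r})}_1<\theta^{(1,\max)}_1$ holds whenever $\theta^{(1,\mathrm{r})}\ne\theta^{(1,\max)}$, regardless of whether condition (\ref{eq:theta1maxinGamma1}) holds, so this does not force anything. The missing step is the link between the sign of $\gamma_2(\theta^{(1,\max)})$ and the relative height of $\tilde\theta^{(1,\mathrm{r})}$. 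One clean route: along the vertical line $\theta_1=\theta^{(1,\mathrm{r})}_1$, $\gamma_2(\theta)=r_{12}\theta^{(1,\mathrm{r})}_1+r_{22}\theta_2$ is strictly increasing in $\theta_2$ (since $r_{22}>0$) and vanishes at $\theta^{(1,\mathrm{r})}$, so $\gamma_2(\tilde\theta^{(1,\mathrm{r})})>0$ exactly when $\tilde\theta^{(1,\mathrm{r})}$ is strictly higher. Moreover the arc of $\partial\Gamma$ strictly to the right of $\theta_1=\theta^{(1,\mathrm{r})}_1$ (whose endpoints are $\theta^{(1,\mathrm{r})}$ and $\tilde\theta^{(1,\mathrm{r})}$ and whose interior contains $\theta^{(1,\max)}$) never meets the line $\gamma_2=0$ in its interior (line $2$ meets $\partial\Gamma$ only at $0$ and $\theta^{(1,\mathrm{r})}$, both with $\theta_1\le\theta^{(1,\mathrm{r})}_1$), so $\gamma_2$ has constant sign there and therefore $\gamma_2(\theta^{(1,\max)})$ has the same sign as $\gamma_2(\tilde\theta^{(1,\mathrm{r})})$. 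Finally, $\theta^{(1,\max)}\notin\partial\Gamma_1$ is exactly $\gamma_2(\theta^{(1,\max)})>0$, which closes the chain of equivalences and handles the degenerate case $\theta^{(1,\mathrm{r})}=\theta^{(1,\max)}$ correctly.
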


Now we are ready to prove \thr{geometric view 1}. 
\begin{proof}[Proof of Theorem~\ref{thr:geometric view 1}]
We prove \thr{geometric view 1} for $i=1$. 
First assume that condition (\ref{eq:theta1maxinGamma1}) is
satisfied. In this case, $\tilde a_2^2 \le 0$ and the face $F_2$ is not
reflective. It follows from part (b) of Theorem 6.3 in
\cite{AvramDaiHasenbein01}, for any $v\in 
\dd{R}^2_+$,
$I^{(1)}(v)$ is given by the expression (3.6) of
\cite{AvramDaiHasenbein01}, which is the right side of 
(\ref{eqn:I 0a}). Thus, in this case, 
\begin{displaymath}
I^{(1)}(v)=\langle v, \theta^{(v,
  \Gamma)}\rangle =  \sup\{ \langle v, \theta \rangle; \theta\in
\Gamma\}=
 \sup\{ \langle v, \theta \rangle; \theta\in
\Gamma_{\max}\},
\end{displaymath}
which is $ \sup\{ \langle v, \theta \rangle; \theta\in
\sr{D}^{(1)}\}$ because $\sr{D}^{(1)}=\Gamma_{\max}$.

Now assume that   
 condition~(\ref{eq:theta1maxinGamma1}) is not satisfied. In this case,
 $\tilde a_2^2 > 0$ and the face $F_2$ is 
 reflective.  If $\tilde\theta^{(1,\rm r)}$ is to the left of
 $\theta^{(2,\max)}$, then $\tilde a^2_1<0$. Thus, 
 for any $v\in \dd{R}^2_+$, $v$ is below $\tilde a^{2}$.
It follows from part (a) in Theorem 6.3 of \cite{AvramDaiHasenbein01}, for any $v\in \dd{R}^2_+$,
$I^{(1)}(v)$ is given by 
\begin{displaymath}
  \langle v,\tilde \theta^{(1,\rm r)} \rangle,
\end{displaymath}
which is  equal to $
 \sup\{ \langle v, \theta \rangle; \theta\in
\sr{D}^{(1)}\}$ by Lemma \ref{lem:geometric}.
Otherwise, we have that $\tilde\theta^{(1,\rm r)}$ is to the right of or
equal to $\theta^{(2,\max)}$. Thus, $\tilde a^2_1\ge 0$.
It follows from part (b) in Theorem 6.3 of \cite{AvramDaiHasenbein01}
for any $v\in \dd{R}^2_+$ that is on or above $\tilde a^2$, 
$I^{(1)}(v)$ is again given by the right side of (\ref{eqn:I 0a}),
which is equal to
$\langle v, \theta^{(v,
  \Gamma)}\rangle$, and therefore to $\sup\{ \langle v, \theta
\rangle; \theta\in 
\sr{D}^{(1)}\}$ by Lemma \ref{lem:geometric}. When $v\in
\dd{R}^2_+$ is below $\tilde a^2$, 
it follows again from part (a) in Theorem 6.3 of \cite{AvramDaiHasenbein01} that
$I^{(1)}(v)$ is given by 
\begin{displaymath}
  \langle v,\tilde \theta^{(1,\rm r)} \rangle,
\end{displaymath}
which is  equal to $
 \sup\{ \langle v, \theta \rangle; \theta\in
\sr{D}^{(1)}\}$ by Lemma \ref{lem:geometric}.
\end{proof}

\begin{proof}[Proof of equation (\ref{eq:tildea2normaltildetheta1r})]
 Because $\gamma(\theta^{(1, \rm r)}) = 0$, we have by (\ref{eq:a2normaltheta1r})
\begin{eqnarray*}
  \br{\Sigma^{-1} (a^{2} - \mu), (a^{2} - \mu)} + 2 \br{\mu, \Sigma^{-1} (a^{2} - \mu)} = 0,
\end{eqnarray*}
  and therefore
\begin{eqnarray}
\label{eqn:a i norm}
  \br{a^{2}, \Sigma^{-1} a^{2}} = \br{\mu, \mu}.
\end{eqnarray}
  From (\ref{eq:symmetrya2}), we have
\begin{eqnarray}
\label{eqn:tilde a ii}
  \br{ \tilde{a}^{2}, \Sigma^{-1} {\rm e}^{2}}=  \br{\Sigma^{-1} \tilde{a}^{2}, {\rm e}^{2}} = \frac{\br{\Sigma^{-1} a^{2},
    {\rm e}^{2}}}{\br {\Sigma^{-1}{\rm e}^2, {\rm e}^2}}   \br{\Sigma^{-1} {\rm e}^{2},{\rm e}^{2}}
  = \br{\Sigma^{-1} a^{2}, {\rm e}^{2}}. 
\end{eqnarray}
  Hence,
\begin{eqnarray*}
 \br{ \tilde{a}^{2}, \Sigma^{-1} \tilde{a}^{2}} &=& \frac{ \br{\Sigma^{-1}
   a^{2}, {\rm e}^{2}} \br{\tilde{a}^{2}, \Sigma^{-1} {\rm
     e}^{2}}}{\br {\Sigma^{-1}{\rm e}^2, {\rm e}^2}} - \frac{\br{a^{2},
   {n}^{2}} \br{\tilde{a}^{2}, 
   {n}^{2}}}{\br {\Sigma n^2, n^2}} \nonumber \\ 
  &=& 
\frac{ \br{\Sigma^{-1}   a^{2}, {\rm e}^{2}}^2}{\br {\Sigma^{-1}{\rm e}^2,
    {\rm e}^2}} + \frac{\br{a^{2}, 
   {n}^{2}}^2}{\br {\Sigma n^2, n^2}} \nonumber \\ 
& = & \br{a^{2}, \Sigma^{-1} a^{2}}. 
\end{eqnarray*}
  This and \eqn{a i norm} imply
\begin{eqnarray*}
  \br{\tilde{a}^{2}, \Sigma^{-1} \tilde{a}^{2}} = \br{\mu, \mu},
\end{eqnarray*}
  which is equivalent to $\gamma(\Sigma^{-1} (\tilde{a}^{2} - \mu)) =
  0$. Thus,  the point
  \begin{displaymath}
    \Sigma^{-1} (\tilde{a}^{2} - \mu)
  \end{displaymath}
is on the ellipse. From equations (\ref{eq:a2decomposition}) and
(\ref{eq:symmetrya2}), the first component of 
\begin{displaymath}
      \Sigma^{-1} (\tilde{a}^{2} - \mu) 
\end{displaymath}
is equal to the first component of $\Sigma^{-1} ({a}^{2} - \mu)$,
which is $\theta^{(1,\rm r)}_1$. Also, from (\ref{eq:a2decomposition}) and
(\ref{eq:symmetrya2}), $\tilde a^2= a^2 $ if and only
if $\langle a^2, n^2\rangle =0$. The latter is equivalent to the
normal direction 
$a^2$ at $\theta^{(1,\rm r)}$ being horizontal or
$\theta^{(1,\rm r)}=\theta^{(1,\max)}$.  Thus,
we have 
either  $\Sigma^{-1} (\tilde{a}^{2} - \mu) \neq
\theta^{(1,\rm r)}$ or  $\Sigma^{-1} (\tilde{a}^{2} - \mu) =
\theta^{(1,\rm r)}=\theta^{(1,\max)}$. Therefore, we have proved that
\begin{displaymath}
  \Sigma^{-1} (\tilde{a}^{2} - \mu) =\tilde \theta^{(1,\rm r)},
\end{displaymath}
which is equivalent to (\ref{eq:tildea2normaltildetheta1r}).
\end{proof}

\section{Product form stationary distribution}
\label{sect:Product form}
\setnewcounter
\citet{HarrisonWilliams87b} proved that a multi-dimensional SRBM has a
product form stationary distribution if and only if the SRBM data
$(\Sigma,\mu, R)$ satisfies $R^{-1}\mu<0$ and the skew symmetry condition
\begin{eqnarray}
\label{eqn:skew symmetric}
  2 \Sigma = R \Delta_{R}^{-1} \Delta_{\Sigma} + \Delta_{\Sigma}
  \Delta_{R}^{-1} R^{\rs{t}} 
\end{eqnarray}
is satisfied, where, for a matrix $A$, $\Delta_{A}$ is the diagonal
matrix whose diagonal entries are that of $A$.  In this section, we
prove the following theorem.
\begin{theorem}\label{thm:productform}
 Assume that (\ref{eqn:P matrix}) and (\ref{eqn:stability 1})
 hold. The two-dimensional SRBM has a product form stationary
 distribution if and only 
 if
 \begin{equation}
   \label{eq:productformgeometric}
\tilde   \theta^{(1,\rm r)} = \tilde \theta^{(2,\rm r)},
 \end{equation}
 holds, where $\tilde \theta^{(i,r)}$ is the symmetry of
 $\theta^{(i,r)}$ defined in (\ref{eq:thetaSymmetry}).
\end{theorem}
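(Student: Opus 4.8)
The plan is to translate the skew-symmetry condition \eqn{skew symmetric} into a statement about the two symmetry points $\tilde\theta^{(1,\rm r)}$ and $\tilde\theta^{(2,\rm r)}$, using the geometric machinery built in Sections~\ref{sect:Geometrical view} and \ref{sect:ADH characterization}. First I would record the well-known fact (Harrison--Williams \cite{HarrisonWilliams87b}) that, for a product-form SRBM, the stationary distribution is a product of exponentials, so $\varphi(\theta)=\prod_i \varphi_i(\theta_i)$ on $\sr{D}$; equivalently the convergence domain $\sr{D}$ is a full rectangle $\{\theta; \theta_1<\tau_1,\ \theta_2<\tau_2\}$, with the dominant singularity point $\tau=(\tau_1,\tau_2)^{\rs t}$ lying on $\partial\Gamma$. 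The key link is \thr{domain 1}: $\sr{D}=\sr{D}^{(1)}\cap\sr{D}^{(2)}$, and from \eqn{tau} the corner $\tau$ equals $\tilde\theta^{(2,\rm r)}$ in Category~II, $\tilde\theta^{(1,\rm r)}$ in Category~III, and $(\theta^{(1,\Gamma)}_1,\theta^{(2,\Gamma)}_2)^{\rs t}$ in Category~I. I will argue that $\sr{D}$ is a rectangle with both corner-coordinates achieved \emph{and} with $\tau\in\partial\Gamma$ precisely when $\tilde\theta^{(1,\rm r)}=\tilde\theta^{(2,\rm r)}$: in that situation both points coincide with a single point $\tau$ on the ellipse whose two coordinates are $\tau_1,\tau_2$, and conversely if $\tau\in\partial\Gamma$ is the rectangle corner then \eqn{tau} forces the two symmetry points to agree with it (in Category~I, $\tau\in\Gamma$ strictly unless the two relevant $\Gamma$-points collapse).

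For the algebraic side I would compute $\tilde\theta^{(i,\rm r)}$ explicitly. By \eqn{tildea2normaltildetheta1r} and its symmetric analogue, $\tilde\theta^{(1,\rm r)}=\Sigma^{-1}(\tilde a^2-\mu)$ and $\tilde\theta^{(2,\rm r)}=\Sigma^{-1}(\tilde a^1-\mu)$, so \eqn{productformgeometric} is equivalent to $\tilde a^1=\tilde a^2$. Using the decomposition \eqn{a2decomposition}--\eqn{symmetrya2} and the relation $\gamma(\theta^{(i,\rm r)})=0$ (which gave \eqn{a i norm}), one can express $\tilde a^1,\tilde a^2$ directly in terms of $\Sigma,\mu,R$: the first coordinate of $\Sigma^{-1}(\tilde a^2-\mu)$ is $\theta^{(1,\rm r)}_1=-2r_{22}\br{\mu,p^{(1)}}/\br{p^{(1)},\Sigma p^{(1)}}$ by \eqn{ray and p 1}, and similarly the second coordinate of $\Sigma^{-1}(\tilde a^1-\mu)$ is $\theta^{(2,\rm r)}_2$. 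Thus $\tilde\theta^{(1,\rm r)}=\tilde\theta^{(2,\rm r)}$ is equivalent to the pair of scalar equations stating that this common ellipse point has first coordinate $\theta^{(1,\rm r)}_1$ and second coordinate $\theta^{(2,\rm r)}_2$ — i.e. $(\theta^{(1,\rm r)}_1,\theta^{(2,\rm r)}_2)^{\rs t}\in\partial\Gamma$. Expanding $\gamma((\theta^{(1,\rm r)}_1,\theta^{(2,\rm r)}_2)^{\rs t})=0$ via \eqn{gamma +} and substituting the formulas for $\theta^{(1,\rm r)}_1$ and $\theta^{(2,\rm r)}_2$ yields a single polynomial identity in the entries of $\Sigma,\mu,R$.

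The crux is then to show that this polynomial identity is equivalent to the skew-symmetry condition \eqn{skew symmetric}. I would do this by direct computation: write \eqn{skew symmetric} out entrywise (using $r_{11}>0,r_{22}>0$ from \eqn{P matrix} so that $\Delta_R$ is invertible), obtaining the single scalar constraint $2\Sigma_{12}=r_{21}\Sigma_{11}/r_{11}+r_{12}\Sigma_{22}/r_{22}$ (the diagonal entries of \eqn{skew symmetric} are automatic), and verify that, after clearing denominators, this matches the identity obtained from $\gamma((\theta^{(1,\rm r)}_1,\theta^{(2,\rm r)}_2)^{\rs t})=0$ once one uses \eqn{ray and p 1} and \eqn{stability 2}. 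I expect this matching of two quadratic/cubic expressions to be the main obstacle — it is a finite but somewhat delicate algebraic verification, and one must be careful that the ``$\neq$'' clause in the definition \eqn{thetaSymmetry} of the symmetry point is handled (the degenerate cases $\theta^{(i,\rm r)}=\theta^{(i,\max)}$, where $\tilde\theta^{(i,\rm r)}=\theta^{(i,\max)}$, should be checked to be consistent with, and in fact instances of, the product-form condition). Once the scalar equivalence \eqn{skew symmetric} $\Longleftrightarrow$ $(\theta^{(1,\rm r)}_1,\theta^{(2,\rm r)}_2)^{\rs t}\in\partial\Gamma$ $\Longleftrightarrow$ $\tilde\theta^{(1,\rm r)}=\tilde\theta^{(2,\rm r)}$ is established, the theorem follows by combining it with \citet{HarrisonWilliams87b} (whose $R^{-1}\mu<0$ hypothesis is exactly our standing assumption \eqn{stability 1}).
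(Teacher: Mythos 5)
Your route is genuinely different from the paper's: you propose to deduce the theorem from Harrison--Williams' skew-symmetry characterization together with an explicit algebraic verification that skew symmetry \eqn{skew symmetric} is equivalent to \eqn{productformgeometric}, whereas the paper avoids any direct algebraic comparison. The paper's necessity argument plugs the explicit product-form densities into the BAR relation \eqn{BAR}, compares coefficients, and reads off $\alpha=\tilde\theta^{(1,\rm r)}=\tilde\theta^{(2,\rm r)}$; its sufficiency argument constructs candidate measures satisfying BAR from the decomposition \eqn{PF gamma 2} and invokes uniqueness of BAR solutions. The authors remark explicitly that the algebraic equivalence of \eqn{skew symmetric} and \eqn{productformgeometric} ``must'' hold but that ``its verification is not obvious,'' and they deliberately sidestep it. You are proposing to carry out exactly the step they declined to do, and you acknowledge that this computation remains to be done. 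So even if your framework were sound, the proof is incomplete at its crux.

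There is, however, a genuine error in your reduction. You claim that \eqn{productformgeometric} is equivalent to the single scalar condition
$\xi:=\bigl(\theta^{(1,\rm r)}_1,\theta^{(2,\rm r)}_2\bigr)^{\rs t}\in\partial\Gamma$.
The forward implication is fine, but the converse fails. When $\xi\in\partial\Gamma$, all you know is that $\xi$ is one of the two ellipse points with abscissa $\theta^{(1,\rm r)}_1$, so $\xi=\theta^{(1,\rm r)}$ or $\xi=\tilde\theta^{(1,\rm r)}$; similarly $\xi=\theta^{(2,\rm r)}$ or $\xi=\tilde\theta^{(2,\rm r)}$. The problematic case $\xi=\theta^{(1,\rm r)}$ with $\theta^{(1,\rm r)}\ne\theta^{(1,\max)}$ can occur and is not the degenerate case ($\theta^{(i,\rm r)}=\theta^{(i,\max)}$) you flag. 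Concretely: take $\Sigma=I$, $\mu=(-1,-1)^{\rs t}$ (so $\partial\Gamma$ is the circle $(\theta_1-1)^2+(\theta_2-1)^2=2$), and
\begin{displaymath}
R=\begin{pmatrix} 1 & -1 \\ 0 & 1\end{pmatrix}.
\end{displaymath}
Conditions \eqn{P matrix} and \eqn{stability 1} hold, $\theta^{(1,\rm r)}=(2,2)^{\rs t}$, $\theta^{(2,\rm r)}=(0,2)^{\rs t}$, so $\xi=(2,2)^{\rs t}\in\partial\Gamma$ and your scalar condition holds. Yet $\tilde\theta^{(1,\rm r)}=(2,0)^{\rs t}\ne(2,2)^{\rs t}=\tilde\theta^{(2,\rm r)}$, so \eqn{productformgeometric} fails, and indeed the skew-symmetry scalar $2\Sigma_{12}-r_{12}\Sigma_{22}/r_{22}-r_{21}\Sigma_{11}/r_{11}=1\ne0$. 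Thus your target scalar identity is strictly weaker than \eqn{skew symmetric}, and the proposed equivalence cannot be established by the computation you outline; the extra constraint that $\xi$ be the symmetry point on both axes (and not $\theta^{(1,\rm r)}$ or $\theta^{(2,\rm r)}$ themselves) is essential and must be built into the algebraic statement before any matching with \eqn{skew symmetric} can succeed.
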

\begin{figure}[tbh]
 	\centering
	\includegraphics[height=2.2in]{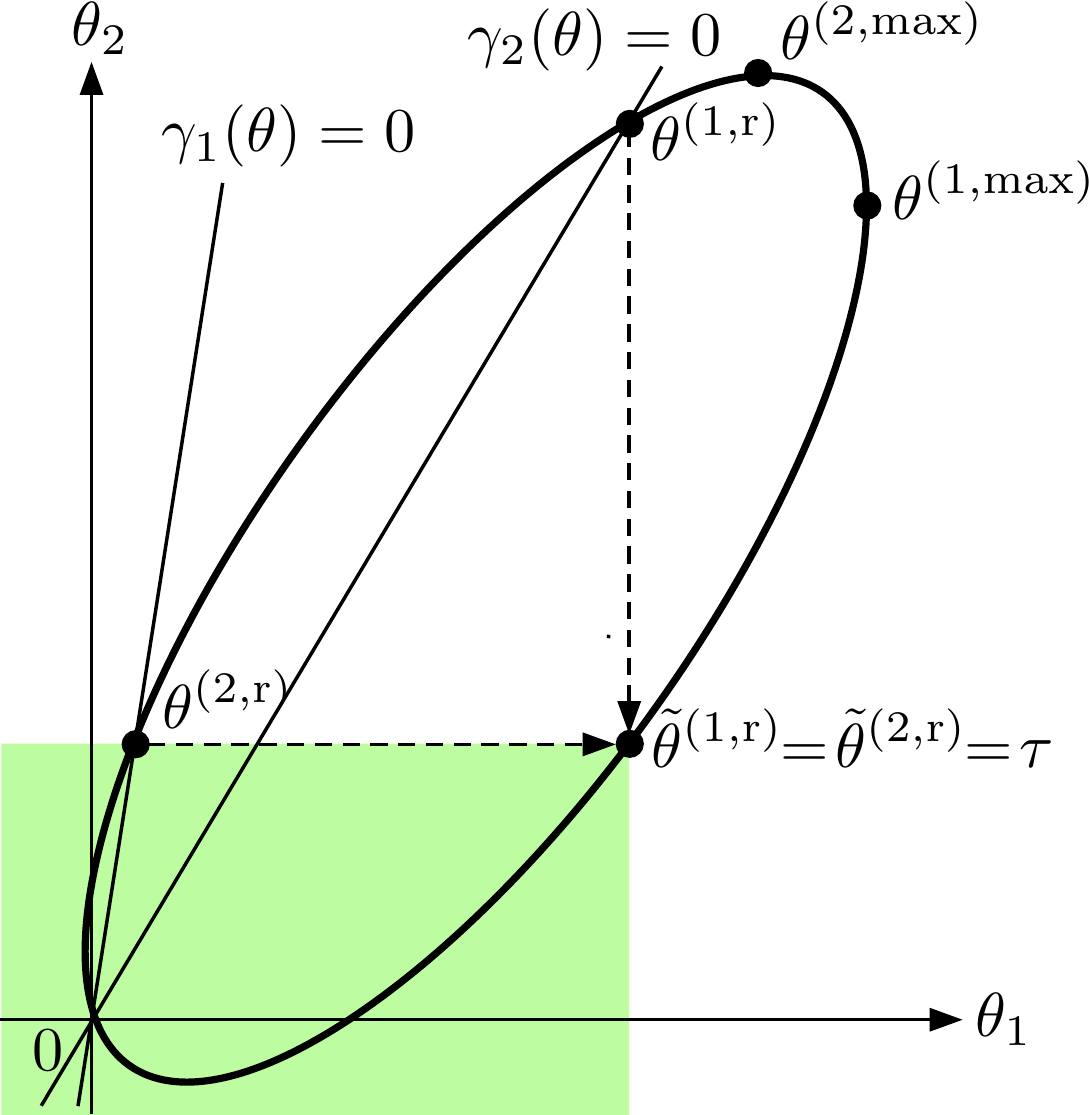} \hspace{3ex}
	\includegraphics[height=2.2in]{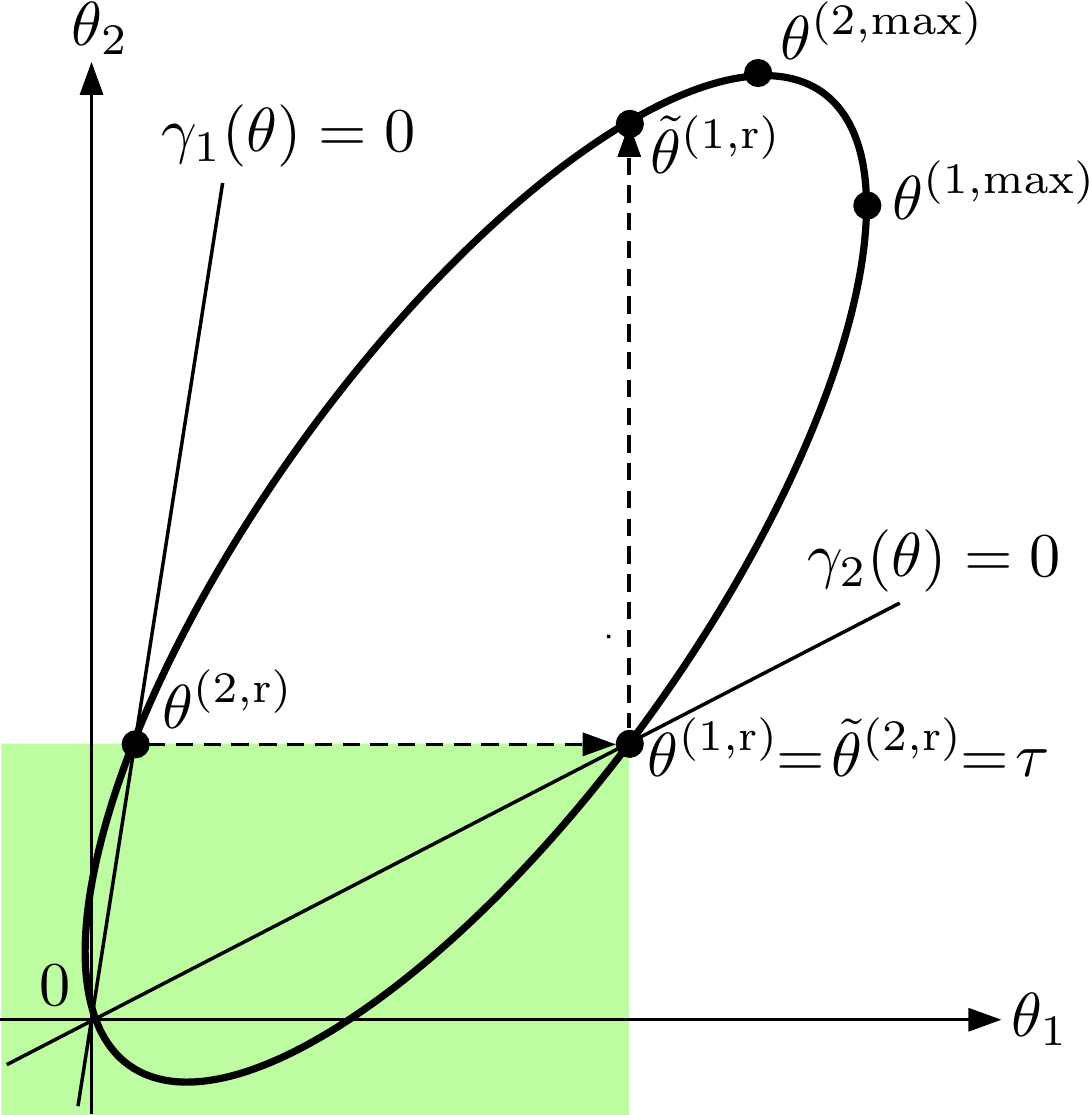}
	\caption{The locations $\theta^{(1,\rm r)}$ and $\theta^{(2,\rm r)}$;
          left panel is
          for  product form; the right panel is for non-product form} 
	\label{fig:Product form0}
\end{figure}
\fig{Product form0} gives examples for the locations of
$\tilde \theta^{(1,\rm r)}$ and $\tilde\theta^{(2,\rm r)}$;  left panel
illustrates an SRBM whose data satisfies
(\ref{eq:productformgeometric}), and the right panel illustrates an
SRBM whose data does not satisfy
(\ref{eq:productformgeometric}).
Equation
(\ref{eq:productformgeometric}) 
 provides a geometric condition on the SRBM
data for the stationary distribution to have a product form. Of
course, the algebraic condition (\ref{eqn:skew symmetric}) and
geometric condition
(\ref{eq:productformgeometric}) must be equivalent although its 
verification is not obvious.
\begin{remark}\label{rem:necessity}
\citet{AvramDaiHasenbein01} proved 
that the skew symmetry condition (\ref{eqn:skew symmetric}) implies
  \begin{equation}
    \label{eq:productformtildea}
    \tilde{a}^{1} = \tilde{a}^{2},
  \end{equation}
  where $\tilde a^{2}$ is defined in (\ref{eq:symmetrya2}) and
  $\tilde a^{1}$ is defined similarly.  Because
  $\tilde a^{2}=n^\Gamma(\tilde\theta^{(1,\rm r)})$ 
and   $\tilde a^{1}=n^\Gamma(\tilde\theta^{(2,\rm r)})$, where
$n^\Gamma(\theta)$ is the outward normal of the ellipse at $\theta$,
 condition (\ref{eq:productformtildea}) is equivalent to 
condition (\ref{eq:productformgeometric}).
Thus, geometric condition (\ref{eq:productformgeometric}) is necessary
for the stationary distribution to have a product form.
\end{remark}

We will provide a complete proof of Theorem
\ref{thm:productform} at the end of this section. Before that, we provide
some background discussion on the product form stationary distribution.
It is known that the stationary distribution
  has a density; see, for example, \cite{HarrisonWilliams87a} and \cite{daihar92}. 
  We use $\zeta(x,y)$ to denote the stationary density
  of the two-dimensional SRBM.  Thus, the stationary distribution 
  has a product form if and only if 
  \begin{equation}
    \label{eq:productformdensity}
  \zeta(x, y) = \zeta_{1}(x) \zeta_{2}(y) \quad \text{ for } x,y \in \dd{R}_{+},
  \end{equation}
  where $\zeta_{i}$'s are the marginal densities of $\zeta$. It
  follows from \cite{HarrisonWilliams87b} that when the
  stationary density is of the product form in (\ref{eq:productformdensity}), each marginal density
  $\zeta_{i}$ must be exponential. Hence, the product form
  holds if and only if there exist some $\alpha_{1}, \alpha_{2} > 0$
  such that 
\begin{eqnarray}
\label{eqn:product form 1}
  \zeta(x,y) = \alpha_{1} \alpha_{2} e^{-(\alpha_{1} x + \alpha_{2}
    y)} \quad \text{ for } x, y \ge 0. 
\end{eqnarray}
\citet{HarrisonWilliams87b} proved that when the stationary density is
of the product form, $\alpha$ is given by
\begin{eqnarray}
\label{eqn:alpha 1}
  \alpha = - 2 \Delta_{\Sigma}^{-1} \Delta_{R} R^{-1} \mu.
\end{eqnarray}

Associated with the stationary distribution are two boundary measures
$\nu_1$ and $\nu_2$. The measure $\nu_i$ has support on
$F_i=\{z\in \dd{R}^2_+; z_i=0\}$, $i=1, 2$; see, for example, Section 2 of
\cite{DaiMiyazawa2011a} for their definition. It is  known that the
stationary distribution, together with its associated boundary
measures, satisfies the basic adjoint relationship (BAR). For a statement of
BAR in two dimensions, see, for example, (4.1) of
\cite{DaiMiyazawa2011a}. 
\citet{HarrisonWilliams87a} proved the necessity of BAR  when the reflection
matrix $R$ is an ${\cal M}$ matrix and the stability condition
$R^{-1}\mu<0$ is satisfied. Assuming the stationary distribution
exists,  \citet{daihar92} proved the necessity of BAR for a general
reflection matrix $R$ that  is  completely-${\cal S}$.

Recall the two-dimensional moment generating function
$\varphi(\theta)$ defined in (\ref{eq:momentPhi}) for 
 the stationary distribution. 
We denote the moment
generating functions of boundary measures $\nu_1$ and $\nu_2$ by
$\varphi_{1}(\theta_2)$ and $\varphi_{2}(\theta_1)$, 
respectively. Then, \citet{DaiMiyazawa2011a} derived  from the basic
adjoint relationship the following key relationship among
moment generating functions: 
\begin{eqnarray}
\label{eqn:BAR}
\gamma(\theta) \varphi(\theta) = \gamma_{1}(\theta) \varphi_{1}(\theta_{2}) +  \gamma_{2}(\theta) \varphi_{2}(\theta_{1})
\end{eqnarray} for any $\theta\in \dd{R}^2$ as long as $\varphi(\theta)$ is finite.

\begin{proof}[Proof of Theorem~\ref{thm:productform}]
Remark \ref{rem:necessity} proves the necessity of the theorem.
Here, we provide a self-contained, alternative proof
for the necessity. The technique used in our necessity proof will be useful in
the sufficiency proof.

Assume that the stationary distribution has the product form
stationary density given by (\ref{eqn:product form 1}). It follows
from \cite{HarrisonWilliams87a} that boundary
measure $\nu_1$ has density
\begin{equation}
  \label{eq:boundaryDensity1}
\zeta_1(y)= \frac{\Sigma_{11}}{2r_{11}} \zeta(0,y)= \frac{\Sigma_{11}}{2r_{11}} \alpha_1\alpha_2 e^{-\alpha_2
  y} \quad \text{ for } y\ge 0
\end{equation}
such that 
\begin{equation}
  \label{eq:boundarynu1}
  \nu_1(\dd{R}_+\times B) = \int_{B} \zeta_1(y) dy \quad \text{ for
    any Borel set $B$ in $\dd{R}_+$}.  
\end{equation}
Similarly, boundary measure $\nu_2$ has
density 
\begin{equation}
  \label{eq:boundaryDensity2}
  \zeta_2(x)=\frac{\Sigma_{22}}{2r_{22}} \zeta(x,0)  = \frac{\Sigma_{22}}{2r_{22}} \alpha_1\alpha_2 e^{-\alpha_2
  x} \quad \text{ for } x\ge 0
\end{equation}
such that 
\begin{equation}
    \label{eq:boundarynu2}
    \nu_2(A\times\dd{R}_+) = \int_{A} \zeta_2(y) dy \quad \text{ for
    any Borel set $A$ in $\dd{R}_+$}.
\end{equation}
It follows from (\ref{eqn:product form 1}),
(\ref{eq:boundaryDensity1}) and (\ref{eq:boundaryDensity2}) that
\begin{eqnarray}
\label{eqn:product form 2}
&& \varphi(\theta) = \frac {\alpha_{1}} {\alpha_{1} - \theta_{1}} \frac
{\alpha_{2}} {\alpha_{2} - \theta_{2}} \quad \text{ for $\theta\in
  \dd{R}^2$ with } \theta<\alpha, \\
\label{eqn:exponential 1}
&&  \varphi_{1}(\theta_{2}) = \frac {\Sigma_{11} \alpha_{1}
  \alpha_{2}} {2r_{11}(\alpha_{2} - \theta_{2})} \quad \text{ for }
\theta_2< \alpha_2,  \\
\label{eqn:exponential 2}
&&  \varphi_{2}(\theta_{1}) = \frac {\Sigma_{22} \alpha_{1}
  \alpha_{2}} {2r_{22}(\alpha_{1} - \theta_{1})} \quad \text{ for } \theta_1<\alpha_1.
\end{eqnarray}
Plugging these expressions for $\varphi(\theta)$, $\varphi_1(\theta_2)$, and
$\varphi_2(\theta_1)$ into key relationship \eqn{BAR}, we have
\begin{eqnarray}
\label{eqn:PF gamma 1}
  \gamma(\theta) = \frac {\Sigma_{22}} {2 r_{22}} \gamma_{2}(\theta)
  (\alpha_{2} - \theta_{2}) + \frac {\Sigma_{11}} {2 r_{11}}
  \gamma_{1}(\theta) (\alpha_1 - \theta_{1})\quad \text{ for any }
  \theta< \alpha. 
\end{eqnarray}
Note that both sides of (\ref{eqn:PF gamma 1}) are quadratic functions of
$\theta$. Since (\ref{eqn:PF gamma 1}) holds for any $\theta < \alpha$
and a quadratic function of two variables is uniquely determined by  six
points,
it must hold for all $\theta\in \dd{R}^2$. In particular,  
we have
\begin{eqnarray}
\label{eqn:tau on Gamma}
  \gamma(\alpha) = 0
\end{eqnarray}
  That is, $\alpha$ must be on the ellipse $\partial
  \Gamma$. Plugging $\theta = \theta^{(1,\rm r)}$ into
  \eqn{PF gamma 1} implies that  
\begin{eqnarray}
\label{eqn:tau theta}
  \alpha_{1} = \theta^{(1,\rm r)}_{1} = \tilde \theta^{(1,\rm r)}_{1} 
\end{eqnarray}
  because $\gamma(\theta^{(1,\rm r)}) = 0$, $\gamma_{2}(\theta^{(1, r)})=0$
 and  $\gamma_{1}(\theta^{(1,\rm r)}) \ne 0$;  see Lemma 2.2 of
  \cite{DaiMiyazawa2011a} for the latter.
Plugging
$\theta =\tilde \theta^{(1,\rm r)}$ into \eqn{PF gamma 1}, we have
\begin{equation}
  \label{eq:gamma2theta1r}
    \frac{\Sigma_{22}}{2r_{22}} \gamma_2(\tilde
  \theta^{(1,\rm r)})(\alpha_2-\tilde\theta^{(1,\rm r)}_2)=0
\end{equation}
 because $\gamma(\tilde
\theta^{(1,\rm r)}) = 0$ and $\alpha_1 =\tilde\theta^{(1,\rm r)}_1$.
If $\tilde\theta^{(1,\rm r)} = \theta^{(1,\rm r)}$, then
$\theta^{(1,\rm r)}=\theta^{(1,\max)}$. This fact, together with 
(\ref{eqn:tau on Gamma}) and (\ref{eqn:tau theta}), implies that 
\begin{equation}
  \label{eq:alpha2equaltildetheta1r2}
  \alpha_2= \tilde \theta^{(1,\rm r)}_2.
\end{equation}
If $\tilde\theta^{(1,\rm r)} \neq  \theta^{(1,\rm r)}$, then 
$\gamma_2(\tilde
  \theta^{(1,\rm r)})\neq 0$, and (\ref{eq:gamma2theta1r}) implies again
  (\ref{eq:alpha2equaltildetheta1r2}). Equations
(\ref{eqn:tau theta}) and (\ref{eq:alpha2equaltildetheta1r2}) imply that
\begin{equation}
  \label{eq:alphaequaltildetheta1r}
  \alpha = \tilde \theta^{(1,\rm r)}.
\end{equation}
Similarly, we can prove
\begin{equation}
  \label{eq:alphaequaltildetheta2r}
  \alpha = \tilde \theta^{(2,\rm r)}.
\end{equation}
Equations (\ref{eq:alphaequaltildetheta1r}) and
(\ref{eq:alphaequaltildetheta2r}) imply that
(\ref{eq:productformgeometric}) holds, proving the necessity of the theorem.

\begin{figure}[h]
 	\centering
	\includegraphics[height=2.2in]{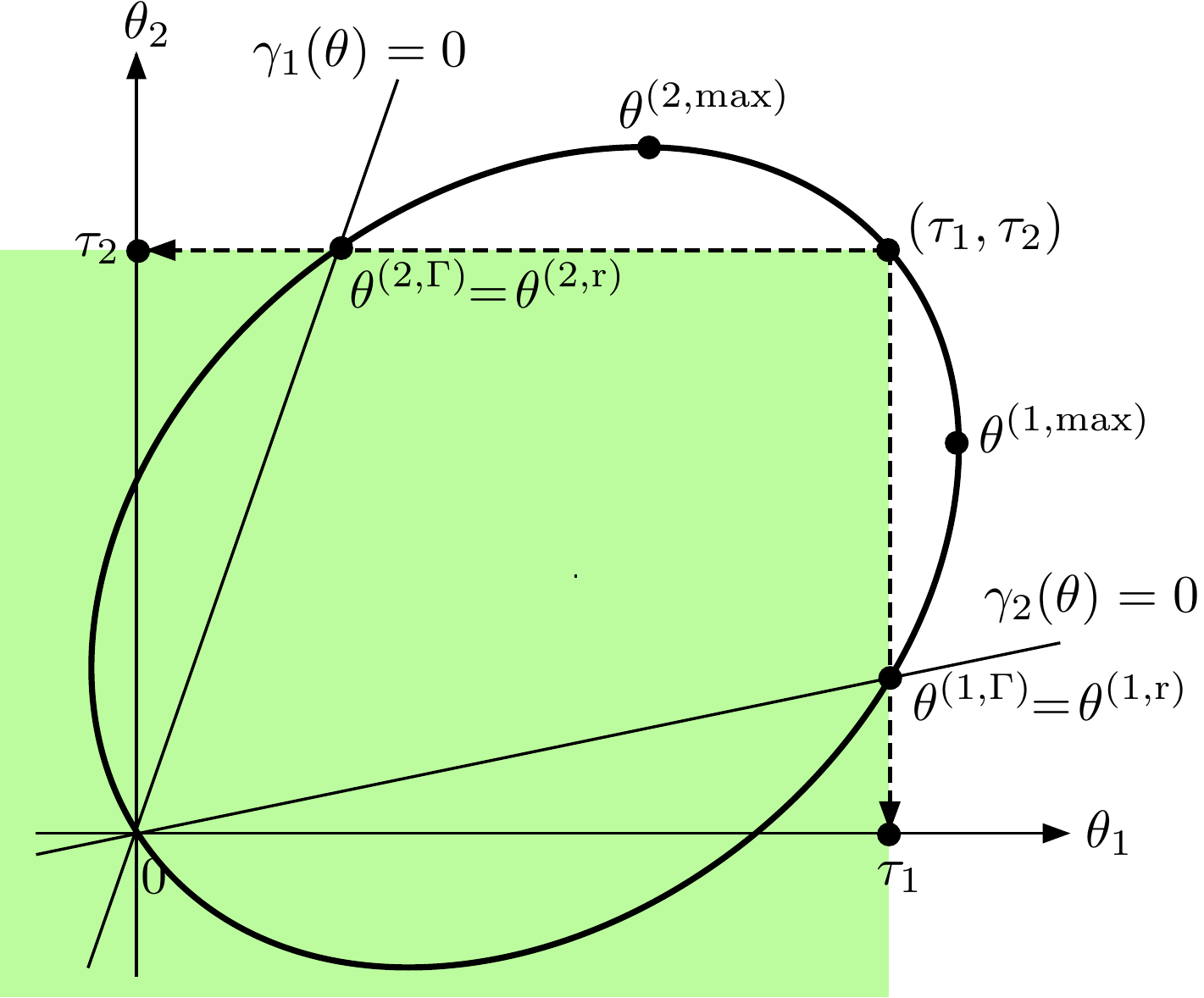}
	\includegraphics[height=2.2in]{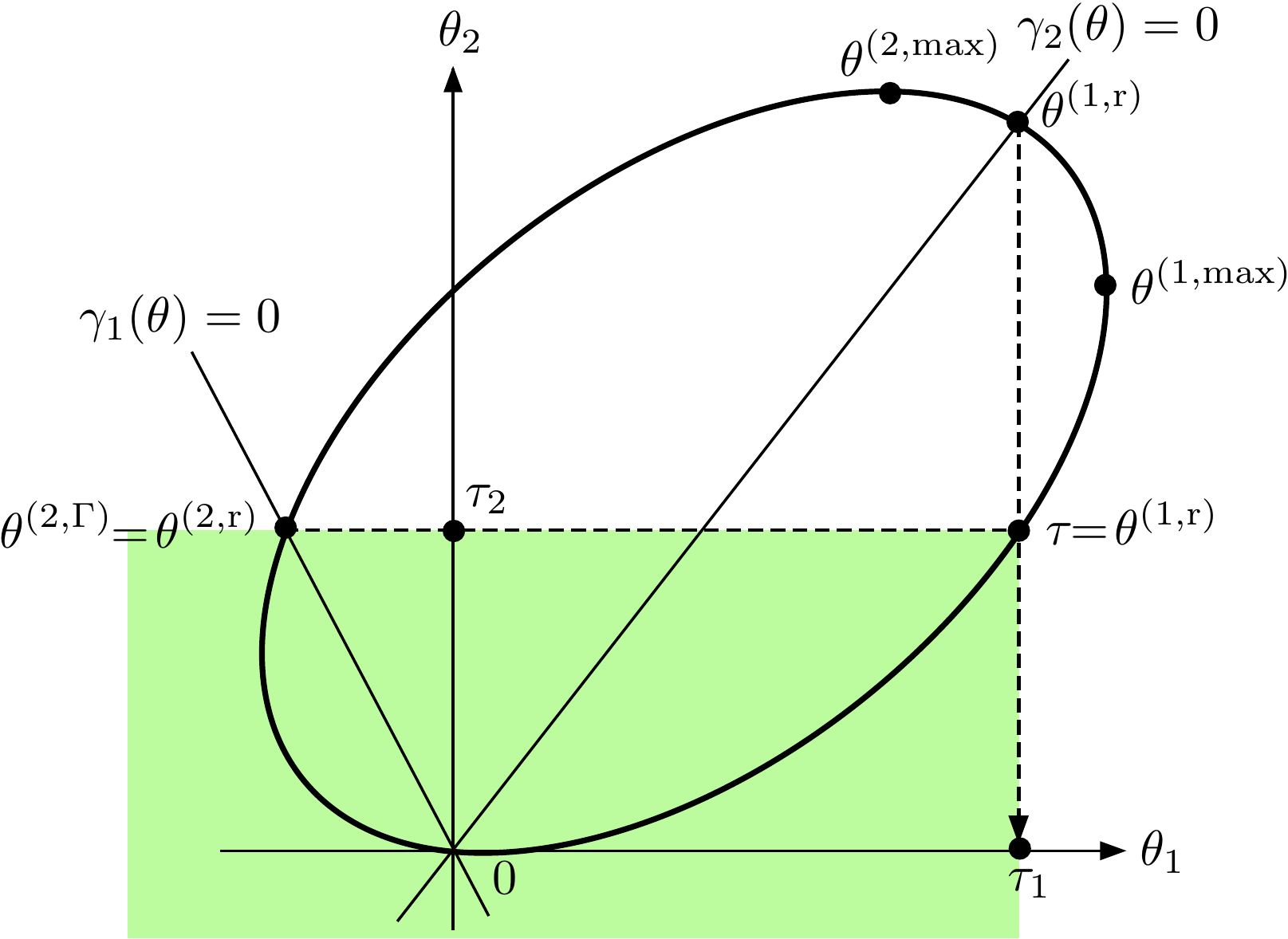}
	\caption{The locations of $\tau$, $\theta^{(1,\rm r)}$ and $\theta^{(2,\rm r)}$ for the product form}
	\label{fig:Product form}
\end{figure}

 We now  prove the sufficiency of the theorem. Assume
 (\ref{eq:productformgeometric}) holds. Let 
 \begin{displaymath}
   \alpha = \tilde \theta^{(1,\rm r)} = \tilde \theta^{(2,\rm r)}.
 \end{displaymath}
Then, $\alpha$ is on the ellipse, and thus $\gamma(\alpha)=0$. Therefore,
\begin{eqnarray}
  \gamma(\theta)& = & \gamma(\theta)-\gamma(\alpha) = \frac{1}{2} \Bigl[ \langle
  \alpha, \Sigma\alpha\rangle - \langle
  \theta, \Sigma \theta\rangle  \Bigr] + \langle \mu, \alpha\rangle -
  \langle \mu, \theta\rangle \nonumber \\
& = & \Bigl \langle
  \alpha-\theta, \frac{1}{2} \Sigma (\theta+\alpha)+\mu \Bigr\rangle
\nonumber  \\
&=& f_{1}(\theta) (\alpha_{1} - \theta_{1}) + f_{2}(\theta) (\alpha_{2} -
\theta_{2}), \label{eq:gammadecomposition}
\end{eqnarray}
where 
\begin{displaymath}
  \begin{pmatrix}
       f_1(\theta) \\ f_2(\theta)
  \end{pmatrix}
= \frac{1}{2} \Sigma (\theta+\alpha)+\mu.
\end{displaymath}
Also, $\gamma(0)=0$ gives 
\begin{displaymath}
\alpha_1 f_1(0) + \alpha_2 f_2(0)=0.
\end{displaymath}
Therefore, there exists a $k\in\dd{R}$ such that 
\begin{equation}
  \label{eq:f0alpha}
  \begin{pmatrix}
       f_1(0) \\ f_2(0)
  \end{pmatrix}
= k
\begin{pmatrix}
  \alpha_2 \\
-\alpha_1
\end{pmatrix}.
\end{equation}
It follows from (\ref{eq:gammadecomposition}) that
\begin{eqnarray}
  \gamma(\theta)
&=& \Bigl(f_{1}(\theta)-k (\alpha_2-\theta_2)\Bigr) (\alpha_{1} -
\theta_{1}) + \Bigl(f_{2}(\theta)+ k(\alpha_1-\theta_1)\Bigr)
(\alpha_{2} -  
\theta_{2}) \nonumber \\
&=& 
 \tilde f_{1}(\theta) (\alpha_{1} -\theta_{1}) + \tilde f_{2}(\theta)
(\alpha_{2} -  \theta_{2}), \label{eq:gammadecomposition2}
\end{eqnarray}
where
\begin{eqnarray*}
&&   \tilde f_{1}(\theta) = f_{1}(\theta)-k (\alpha_2-\theta_2), \\
&&   \tilde f_{2}(\theta) =f_{2}(\theta)+ k(\alpha_1-\theta_1).
\end{eqnarray*}
Both functions $\tilde f_{1}(\theta)$ and $\tilde f_{2}(\theta)$ are linear in
$\theta$ and (\ref{eq:f0alpha}) implies that $\tilde f_1(0)=0$ and $\tilde f_2(0)=0$.
If $\alpha \ne \theta^{(1, \rm r)}$, substituting $\theta =
\theta^{(1,\rm r)}$ into  equation (\ref{eq:gammadecomposition2}) yields 
\begin{eqnarray}
\label{eqn:f2 vanish}
  \tilde f_{2}(\theta^{(1,\rm r)}) = 0.
\end{eqnarray}
 If $\alpha = \theta^{(1, \rm r)}$, we have $\theta^{(1, \rm r)} =
 \theta^{(1, \max)}$,
\begin{figure}[t]
 	\centering
	\includegraphics[height=1.9in]{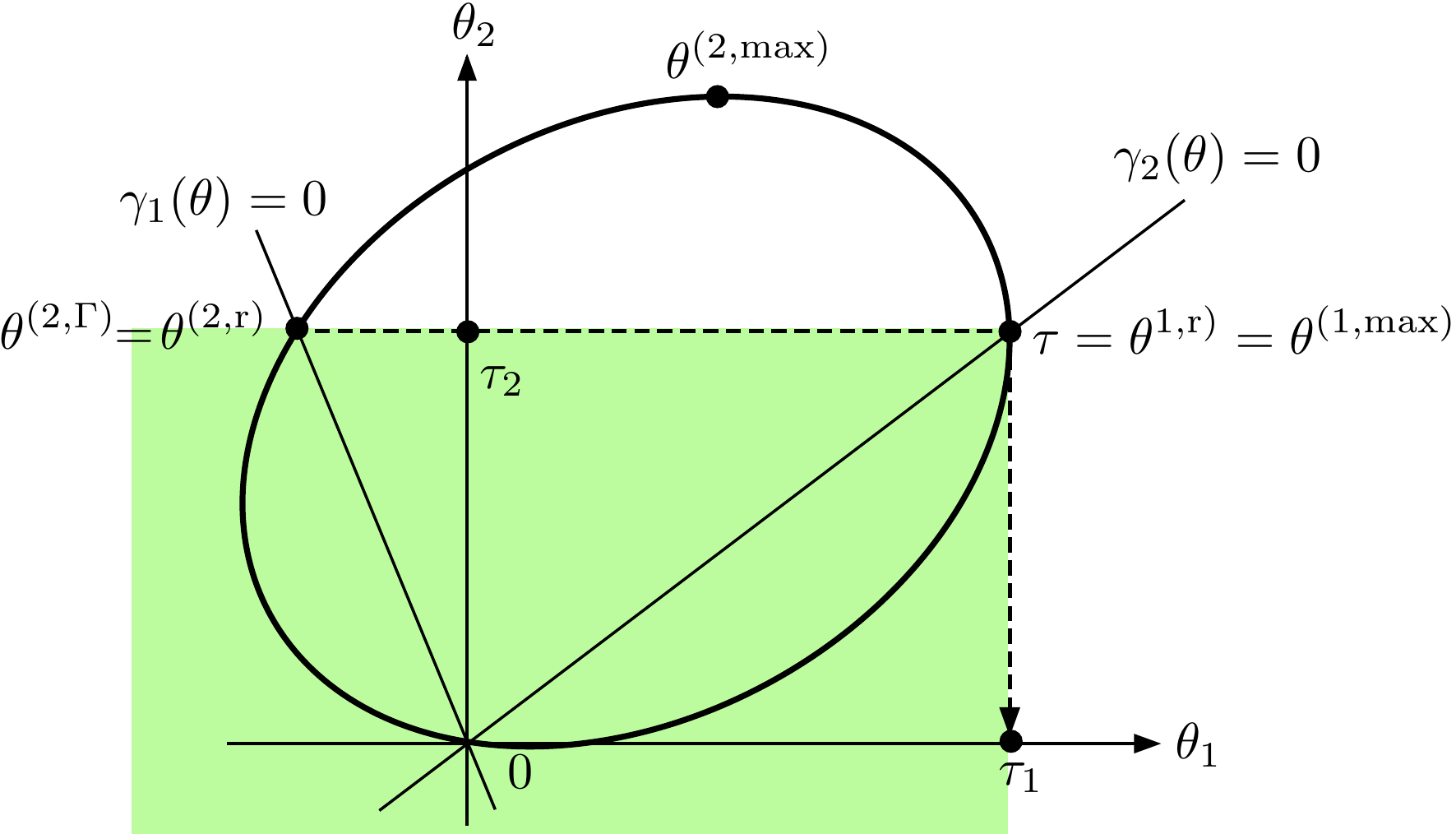}
	\caption{The case that $\theta^{(1, \rm r)} = \theta^{(1, \max)}$ for the product form}
	\label{fig:PF boundary case}
\end{figure}
which implies
\begin{eqnarray*}
  \left. \frac {\partial}{\partial \theta_{2}} \gamma(\theta) \right|_{\theta= \theta^{(1, \rm r)}} = 0,
\end{eqnarray*}
from  which \eqn{f2 vanish} follows again. Thus, we have proved that
  two linear equations,
  $\tilde f_2(\theta)=0$ and  $\gamma_{2}(\theta)=0$, both have $0$ and
  $\theta^{(1,\rm r)}$ as their roots. Hence, $\tilde f_{2}(\theta)$ must
  be proportional to $\gamma_{2}(\theta)$. Namely, there exists a
  constant $c_2$ such that 
  \begin{equation}
    \label{eq:f2prop}
    \tilde f_2(\theta)=c_2 \gamma_2(\theta) \quad \text{ for all }
    \theta\in \dd{R}^2.
  \end{equation}
Similarly, there exists a constant $c_1$ such that
\begin{equation}
  \label{eq:f1prop}
    \tilde f_1(\theta)=c_1 \gamma_1(\theta) \quad \text{ for all }
    \theta\in \dd{R}^2.
\end{equation}
It follows from (\ref{eq:gammadecomposition2}), (\ref{eq:f2prop}),
and (\ref{eq:f1prop}) 
 that 
that $\gamma(\theta)$ can be written as 
\begin{eqnarray}
\label{eqn:PF gamma 2}
  \gamma(\theta) = c_{1} \gamma_{1}(\theta) (\alpha_{1} - \theta_{1}) +
  c_{2} \gamma_{2}(\theta) (\alpha_{2} - \theta_{2}) \quad \text{ for
    all } \theta\in \dd{R}^2.
\end{eqnarray}
Comparing the coefficients of quadratic terms in (\ref{eqn:PF gamma
  2}),  we find
\begin{eqnarray*}
  c_{i} = \frac {\Sigma_{ii}} {2 r_{ii}}, \qquad i=1,2.
\end{eqnarray*}
  Hence, if we define $g(\theta)$, $g_{1}(\theta_{2})$ and $g_{2}(\theta_{1})$ as
\begin{eqnarray*}
  g(\theta) = \frac {\alpha_{1}} {\alpha_{1} - \theta_{1}} \frac
  {\alpha_{2}} {\alpha_{2} - \theta_{2}}, \qquad g_{2}(\theta_{2}) = \frac
  {c_{1} \alpha_{1} \alpha_{2}} {\alpha_{2} - \theta_{2}}, \qquad
  g_{1}(\theta_{1}) = \frac {c_{2} \alpha_{1} \alpha_{2}} {\alpha_{1} -
    \theta_{1}} \quad  \text{ for } \theta< \alpha, 
\end{eqnarray*}
  then $g, g_{1}, g_{2}$ are the solutions $\varphi, \varphi_{1},
  \varphi_{2}$ to the key relationship \eqn{BAR}. Let $\pi$ be the
  probability measure on $\dd{R}^2_+$ with $\zeta(x,y)$ in
  (\ref{eqn:product form 1}) as its density 
  function, and $\nu_1$ and $\nu_2$ be two boundary measures defined
  in (\ref{eq:boundarynu1}) and (\ref{eq:boundarynu2}). Thus, we have
  proved that $\pi$, $\nu_1$ and $\nu_2$ satisfy BAR (4.1) of
  \cite{DaiMiyazawa2011a} for all exponential functions 
  \begin{displaymath}
    f(x,y) = e^{\theta_1 x+ \theta_2 y} \quad x, y \ge 0
  \end{displaymath}
with $\theta<\alpha$.
From this, one can argue that BAR is satisfied for all $f\in
C^2_b(\dd{R}^2)$, the set of  functions which together with their
first and second order partial
derivatives are continuous and bounded.  It follows from \cite{daikur94a}
that 
BAR uniquely determines the stationary distribution. Thus, the SRBM
must have  $\zeta(x,y)$  in (\ref{eqn:product form 1}) as its
stationary density.
\end{proof}

\section{Exact asymptotics for boundary measures}
\label{sect:Exact asymptotics}
\setnewcounter

As before, we assume the SRBM satisfies conditions
(\ref{eqn:P matrix}) and (\ref{eqn:stability 1}) so that it has a
unique stationary distribution.
  In \sectn{Product form}, immediately below (\ref{eqn:alpha
    1}), we introduced two
  boundary measures $\nu_1$ and $\nu_2$ associated with the stationary
  distribution of the SRBM.  In this section, we study
  the exact asymptotics for the tail distribution of $\nu_i$. 

  The main result of this section is Theorem~\ref{thr:boundary measure
    asymptotics}, which will be stated shortly.  We emphasize that
  Lemmas~\ref{lem:tail equivalence 1}, \ref{lem:handy 1} and
  \ref{lem:handy 2} that are used in the proof of \thr{boundary
    measure asymptotics} constitute a significant contribution of this
  paper as well. These lemmas can potentially be used for other
  related problems when conditions in asymptotic inversion lemmas
    such as Lemmas \lemt{f asymptotics 1} and \lemt{f asymptotics 2}
    are difficult to check. 
  
  For convenience, we write $f_{1}(x) \sim f_{2}(x)$ as $x\to\infty$ for two functions $f_{1}, f_{2}$ on $[0,\infty)$ if
\begin{eqnarray*}
  \lim_{x \to \infty} f_{1}(x)/f_{2}(x) = 1.
\end{eqnarray*}
 Recall  $\tau$   defined in (\ref{eqn:tau i}) and the three
 categories defined through 
(\ref{eq:cateI})-(\ref{eq:cateIII}) in the proof of 
\thr{domain 1}. 
\begin{theorem} {\rm
\label{thr:boundary measure asymptotics}
Under conditions \eqn{P matrix} and \eqn{stability 1}, we have the
exact asymptotic:
\begin{eqnarray}
\label{eqn:}
  \lim_{x \to \infty} \frac{ \nu_{2}((x,\infty))}{ x^{\kappa} e^{- \tau_{1} x}} = b > 0,
\end{eqnarray}
  for some constant $b > 0$, where $\kappa$ is given below. 

\noindent (a) If $\tau_{1} < \theta^{(1,\max)}_{1}$, then
for Category I and Category III, 
 $\kappa=0$,  and for Category II,
\begin{eqnarray}
\label{eqn:boundary asymptotic 1}
  \kappa = \left\{\begin{array}{ll}
  0, \quad & \tau_{1} \ne \theta^{(1,\rm r)}_{1},\\
  1, \quad & \tau_{1} = \theta^{(1,\rm r)}_{1}.
  \end{array} \right.
\end{eqnarray}
  (b) If $\tau_{1} = \theta^{(1,\max)}_{1}$, then for Category I,
\begin{eqnarray}
\label{eqn:boundary asymptotic 2}
  \kappa = \left\{\begin{array}{ll}
  -\frac 12, \quad & \theta^{(1,\rm r)} = \theta^{(1,\max)},\\
  -\frac 32, \quad & \theta^{(1,\rm r)}\ne \theta^{(1,\max)},
  \end{array} \right.
\end{eqnarray}
  and for Category II,
\begin{eqnarray}
\label{eqn:boundary asymptotic 3}
  \kappa = \left\{\begin{array}{ll}
  0, \quad & \theta^{(1,\rm r)} = \theta^{(1,\max)},\\
  -\frac 12, \quad & \theta^{(1,\rm r)} \ne \theta^{(1,\max)}.
  \end{array} \right.
\end{eqnarray}
}\end{theorem}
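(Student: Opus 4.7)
The plan is to extract the moment generating function $\varphi_2(\theta_1)$ of $\nu_2$ from the basic adjoint relationship (\ref{eqn:BAR}), pinpoint the type of singularity it exhibits at $\theta_1 = \tau_1$, and feed that information into the three sharpened inversion lemmas (Lemmas~\ref{lem:tail equivalence 1}, \ref{lem:handy 1}, \ref{lem:handy 2}). The exponential rate $e^{-\tau_1 x}$ is forced by the location of the singularity, and the polynomial factor $x^\kappa$ is determined by its order and type (pole vs.\ algebraic branch point).

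First, on the ellipse $\partial\Gamma$, BAR reduces to $\gamma_1(\theta)\varphi_1(\theta_2) + \gamma_2(\theta)\varphi_2(\theta_1) = 0$. For each $\theta_1$ in a neighborhood of the real interval $[0,\theta^{(1,\max)}_1]$, the equation $\gamma(\theta_1,\theta_2)=0$ has two solutions $\theta_2^{\pm}(\theta_1)$, and I would solve for
\begin{equation*}
  \varphi_2(\theta_1) \;=\; -\,\frac{\gamma_1(\theta_1,\theta_2^-(\theta_1))}{\gamma_2(\theta_1,\theta_2^-(\theta_1))}\,\varphi_1\bigl(\theta_2^-(\theta_1)\bigr),
\end{equation*}
choosing the lower branch that agrees with the convergence domain $\mathcal D$. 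Since $\theta_2^{\pm}$ are roots of a quadratic whose discriminant vanishes exactly at $\theta_1 = \theta^{(1,\max)}_1$, the map $\theta_1 \mapsto \theta_2^-(\theta_1)$ is analytic on $\{\theta_1 < \theta^{(1,\max)}_1\}$ and exhibits a square-root branch point at $\theta_1 = \theta^{(1,\max)}_1$. The function $\varphi_1(\theta_2)$, in turn, is characterised in \cite{DaiMiyazawa2011a}: it is meromorphic in a neighborhood of the real interval up to $\theta_2^{(2,\Gamma)}_2$, with at most a simple pole at $\theta_2 = \theta^{(2,\rm r)}_2$ (this pole is present precisely in Category II).

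Second, I would classify the singularity of $\varphi_2(\theta_1)$ at $\theta_1 = \tau_1$ according to the two regimes in the theorem. When $\tau_1 < \theta^{(1,\max)}_1$ (part (a)), the branch structure is inactive at $\tau_1$, so any singularity is a pole. In Categories I and III the unique pole is simple (coming from either $\gamma_2(\theta_1,\theta_2^-(\theta_1))=0$ or from $\varphi_1$), giving $\kappa=0$; in Category II with $\tau_1 = \theta^{(1,\rm r)}_1$ the pole of $\varphi_1$ at $\theta^{(2,\rm r)}_2$ and the zero/pole cancellation coming from the ray $\gamma_2=0$ coalesce to produce a double pole, which yields $\kappa = 1$. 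When $\tau_1 = \theta^{(1,\max)}_1$ (part (b)), the square-root branch point of $\theta_2^-$ dominates: generically $\varphi_2(\theta_1) \sim C\sqrt{\theta^{(1,\max)}_1 - \theta_1}$, giving $\kappa = -3/2$; when $\theta^{(1,\rm r)} = \theta^{(1,\max)}$ one of the relevant factors in numerator or denominator vanishes at the same point, shifting the exponent by $\pm 1$ to recover the Category-dependent values $-\tfrac12$ or $0$. Throughout, positivity of the leading coefficient $b$ follows by tracking signs of $\gamma_1$, $\gamma_2$ and $\varphi_1$ along $\partial\Gamma$ at the relevant points, together with the fact that $\nu_2$ is a positive measure.

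Third, for each of these singular asymptotic behaviors, I would invoke the appropriate sharpened inversion lemma to translate $\varphi_2(\theta_1) \sim C(\tau_1-\theta_1)^{-\kappa-1}$ (or the corresponding square-root form) into $\nu_2((x,\infty)) \sim b\,x^{\kappa}e^{-\tau_1 x}$. Here Lemma~\ref{lem:tail equivalence 1} handles the reduction to tail of a measure, while Lemmas~\ref{lem:handy 1} and~\ref{lem:handy 2} produce the polynomial prefactor in the pole and branch-point cases respectively. The main obstacle is verifying the technical side conditions of these inversion lemmas; namely, that $\varphi_2$ admits an analytic continuation across the circle $|\theta_1|=\tau_1$ minus the singular point, with controlled growth. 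This is precisely where the sharpened lemmas are essential, since the previous versions in \cite{DaiMiyazawa2011a} were designed for the marginal MGF $\varphi(\theta_1,0)$ whose smoothing by the second variable gives stronger regularity than what is available for the one-dimensional $\varphi_2(\theta_1)$. Establishing the required regularity for $\varphi_2$ — by combining the explicit representation above with the analytic continuation of $\varphi_1$ along a carefully chosen path avoiding singularities — is the heart of the argument and has to be carried out separately in each of the cases enumerated in the theorem.
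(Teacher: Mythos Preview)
Your proposal is correct and follows the same overall strategy as the paper: classify the singularity of $\varphi_2$ at $\theta_1=\tau_1$ (pole versus branch point, and its order) and then feed this into the sharpened inversion Lemmas~\ref{lem:handy 1} and~\ref{lem:handy 2}. The paper's proof is much shorter than your sketch because it does not re-derive the singularity structure from BAR and the ellipse parametrization as you outline; instead it simply cites Lemmas~6.6 and~6.8 of \cite{DaiMiyazawa2011a}, where precisely that analysis (the representation of $\varphi_2$ in terms of $\varphi_1$ along the lower branch of the ellipse, the pole/branch-point classification in each category, and the analytic continuation and boundedness needed for the inversion hypotheses) has already been established. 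So what you identify as ``the heart of the argument'' is available as a black box, and the present paper's contribution is solely the sharpened inversion lemmas that close the gap between those singularity results and the tail asymptotic of $\nu_2$.

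One small correction: Lemma~\ref{lem:tail equivalence 1} is not a separate reduction step that you apply to pass from a density to a tail. It is used \emph{inside} the proofs of Lemmas~\ref{lem:handy 1} and~\ref{lem:handy 2} (to descend from $T(f)$ or $T^2(f)$ back to $f$ after gaining enough decay in the moment generating function), and those two lemmas already take a measure $\nu$ as input and output the asymptotic of $\nu(x,\infty)$ directly.
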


We note that,  in Category III, one must have
$\tau_1<\theta^{(1,\max)}_1$. Thus, we do not need to consider
Category III when $\tau_1=\theta^{(1,\max)}_1$.
In Section~\ref{sect:Product form}, we introduced the moment
generating functions $\varphi_1(\theta_2)$ and $\varphi_2(\theta_1)$
for boundary measures $\nu_{1}$ and $\nu_{2}$.
\citet{DaiMiyazawa2011a} studied analytic 
behaviors of $\varphi_{1}(z)$ and $\varphi_{2}(z)$ as functions of
complex variable 
$z$. In particular, they proved that $\varphi_2(z)$ is analytic if
$\Re{z}<\tau_1$ and is singular at $z=\tau_1$. Furthermore, they
characterized the nature of the singularities at $z=\tau_1$.
See Lemmas 6.6, 6.7 and 6.8 of \cite{DaiMiyazawa2011a}. We will use the
singularity of $\varphi_2(z)$ at $\tau_1$ to prove Theorem
\ref{thr:boundary measure asymptotics}. Before proving the theorem, we
first introduce a series of
three lemmas.


For a nonnegative function  $f:\dd{R}_+\to \dd{R}_+$ that is
integrable on $\dd{R}_+$, let 
\begin{equation}
  \label{eq:mgfg}
  g(z) = \int_0^\infty e^{z x} f(x) dx
\end{equation}
be its moment generating function. Assume that $g$ is analytic for
$\Re z< \alpha$  and is singular at $z=\alpha$ for some $\alpha>0$. 
Assuming $f$ is continuous, one
can apply the complex 
inversion formula to represent $f$ as a contour integral of $g$. From
this integral representation, one then uses the singular behavior of
$g$ at $z=\alpha$ to
obtain the exact tail asymptotic of $f$.  \citet{Doetsch74} is a good
reference for the complex inversion formula. \citet{DaiMiyazawa2011a} have
successfully applied this technique to study the exact asymptotic for tail 
distribution of $\langle c, Z(\infty)\rangle $ in an arbitrarily given direction
$c\in \dd{R}_+^2$. In particular, they developed two asymptotic
inversion lemmas, Lemmas C.1 and C.2 of \cite{DaiMiyazawa2011a}, that
allow one to directly infer the form of exact tail asymptotics of $f$
from the singular behavior of $g$ at $z=\alpha$. With these two
lemmas, there is no 
need to get into contour integral and complex inversion formula. For
convenience, these two lemmas are reproduced in the appendix of this
paper.

In order to apply one of their asymptotic inversion lemmas,  one
needs to verify a certain set of conditions on the moment generating
function $g$. For example, to apply Lemma \ref{lem:f asymptotics 1}, 
one often needs to verify the sufficient condition (\ref{eqn:C1bc
  sufficient condition}). Namely, there are some positive constants 
$\beta$, $a$, $b$ and $\delta$ such that 
\begin{equation}
  \label{eq:inversion3}
  |g(z)| < \frac{a}{|z|^{1+\delta}} \text{ for } \Re z \in[0, \beta]
  \text{ and } |\Im z|> b.
\end{equation}
In the proof of our Theorem~\ref{thr:boundary measure asymptotics}, 
    \begin{displaymath}
    f(x)=\nu_2(x, \infty)  \quad \text{ for } x \ge 0          
    \end{displaymath}
 and the corresponding moment generating function is
\begin{eqnarray}
\label{eqn:g 2}
  g(z) = \frac{1}{z}( \varphi_2(z)-\varphi_2(0)) \quad \Re z < \tau_1.
\end{eqnarray}
  When $\tau_1<\theta^{(1,\max)}_1$, one can prove that $\varphi_2(z)$
  is bounded in a region given by $\Re z \in[0, \tau_1+\epsilon]$ and
  $|\Im z|> b$ for some $\epsilon>0$ and some $b>0$. Still we are
  unable to prove condition (\ref{eq:inversion3}) for some positive
  $\delta$. Furthermore, we do not know how to check conditions
  (\appt{Asymptotic inversion}1a)-(\appt{Asymptotic inversion}1c) of
  Lemma~\ref{lem:f asymptotics 1} directly.

To overcome the preceding difficulty, we introduce a new
technique. We develop a series of
three lemmas. These lemmas  may have independent interest.
 Let 
\begin{eqnarray*}
  T(f)(x) = \int_{x}^{\infty} f(u) du, \qquad x > 0,
\end{eqnarray*}
be the tail distribution of $f$.
Inductively, for $n \ge 1$, define
\begin{eqnarray*}
  T^{n}(f)(x) = \int_{x}^{\infty} T^{(n-1)}(f)(u) du, \qquad x > 0,
\end{eqnarray*}
  where $T^{0}(f) = f$. Denote the moment generating function of
  $T^{n}(f)$ by $\hat{T^{n}}(f)$. Then, it is easy to see that
\begin{eqnarray}
\label{eqn:T 1}
 && \hat{T}(f)(z) = z^{-1} \bigl(g(z) - g(0)\bigr),\\
\label{eqn:T 2}
 && \hat{T^{2}}(f)(z) = z^{-2} \bigl(g(z) - g(0) - g'(0)z\bigr),
\end{eqnarray}
where $g=\hat{T^0}(f)$ is the moment generating function of $f$ given
in (\ref{eq:mgfg}). Thus,
it is easier to get asymptotics for $T^{n}(f)$ with $n \ge 1$ because
it is easier for $\hat{T^{n}}(f)(z)$ to satisfy sufficient conditions
such as (\ref{eq:inversion3}). Thus, if $T^{n}(f)$ have the same
asymptotics as $f$, it will be more convenient to work with
${T^{n}}(f)(z)$. We will show that $f$ and $T^{n}(f)$ indeed have
the same asymptotics if the asymptotic is limited to a certain
exponential type.

We call a function $f$ to be {\em ultimately nonincreasing} if there
is an $x_{0}$ such that $f(x)$ is nonincreasing for $x \ge x_{0}$. We
have the following lemma showing the equivalence of tail asymptotics.
The lemma is a 
converse of Lemma~D.5 of \cite{DaiMiyazawa2011a}.
\begin{lemma} 
\label{lem:tail equivalence 1}
  Assume that the nonnegative function $f$ is ultimately nonincreasing. For each integer $n \ge 1$, if
\begin{mylist}{-3}
\item [(\lemt{tail equivalence 1}a)] $T^{n}(f)(x) \sim b \,
  x^{\kappa} e^{-\alpha x}$ as $x\to\infty$ for some real number
  $\kappa$, positive 
  number $b$ and nonnegative number $\alpha$,
\end{mylist}
 then $f(x) \sim
  \alpha^{n} b \, x^{\kappa} e^{-\alpha x}$ as $x\to\infty$. In particular, (\lemt{tail equivalence 1}a) implies that $T(f)(x) \sim
  \alpha^{n-1} b \, x^{\kappa} e^{-\alpha x}$ as $x\to\infty$ for any nonnegative and integrable function $f$ on $[0,\infty)$.
\end{lemma}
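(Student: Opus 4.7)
The plan is to proceed by induction on $n$, reducing the general case to $n=1$ by the observation that for any nonnegative integrable $f$, the tail $T(f)$ is itself nonincreasing on $[0,\infty)$ (hence ultimately nonincreasing), so $T^{n-1}$ applied to $T(f)$ equals $T^{n}(f)$ and the inductive hypothesis may be reapplied to the function $T(f)$.

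For the base case $n=1$, assume $T(f)(x) \sim b x^{\kappa} e^{-\alpha x}$ and $\alpha>0$. Using that $f$ is nonincreasing for $x \ge x_{0}$, for any $h>0$ and $x \ge x_{0}$ I get the sandwich
\begin{eqnarray*}
   h\, f(x+h) \;\le\; \int_{x}^{x+h} f(u)\,du \;=\; T(f)(x) - T(f)(x+h) \;\le\; h\,f(x).
\end{eqnarray*}
Dividing by $h\,b\,x^{\kappa} e^{-\alpha x}$ and using the assumed asymptotic, the middle quantity tends to $(1-e^{-\alpha h})/h$ as $x\to\infty$. Taking $\liminf$ on the left inequality yields
\begin{eqnarray*}
  \liminf_{x\to\infty}\frac{f(x)}{b x^{\kappa}e^{-\alpha x}} \;\ge\; \frac{1-e^{-\alpha h}}{h},
\end{eqnarray*}
while the right inequality, after rescaling by $(x+h)^{\kappa}e^{-\alpha(x+h)}$ instead (a ratio that tends to $e^{-\alpha h}$ relative to $x^{\kappa}e^{-\alpha x}$), gives
\begin{eqnarray*}
  \limsup_{y\to\infty}\frac{f(y)}{b y^{\kappa}e^{-\alpha y}} \;\le\; \frac{e^{\alpha h}-1}{h}.
\end{eqnarray*}
Letting $h \downarrow 0$ in both bounds, both the liminf and limsup converge to $\alpha$, so $f(x) \sim \alpha b x^{\kappa}e^{-\alpha x}$.

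For the inductive step, given that the result holds for $n-1$, set $g = T(f)$. Then $g$ is nonnegative, integrable on $[0,\infty)$ (since $T^{n}(f)(0)<\infty$ by the assumed asymptotic), and nonincreasing on all of $[0,\infty)$. Moreover $T^{n-1}(g) = T^{n}(f) \sim b x^{\kappa} e^{-\alpha x}$. The inductive hypothesis applies to $g$ and yields $g(x) = T(f)(x) \sim \alpha^{n-1} b x^{\kappa} e^{-\alpha x}$. A second application of the base case (whose input, $T(f)$, is monotone) then gives $f(x) \sim \alpha \cdot \alpha^{n-1} b x^{\kappa} e^{-\alpha x} = \alpha^{n} b x^{\kappa} e^{-\alpha x}$, as required. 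The final clause about $T(f)$ is the case $n=n$ of the induction applied with $T(f)$ in place of $f$: one simply drops the monotonicity hypothesis, which is automatic.

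The main technical point to be careful about is the $x^{\kappa}$ factor in the sandwich estimate: the ratios $(x+h)^{\kappa}/x^{\kappa}$ must be handled so that they converge to $1$ uniformly in $h$ on bounded sets as $x\to\infty$, which is immediate since $\kappa$ is a fixed real number and $h$ is fixed before taking $x\to\infty$. The only marginal case is $\alpha=0$, where the conclusion $f(x)\sim 0$ must be interpreted as $f(x)=o(x^{\kappa})$; the same sandwich argument delivers this without difficulty, but that boundary case is not needed for the applications in Theorem~\ref{thr:boundary measure asymptotics}, where $\alpha=\tau_1>0$.
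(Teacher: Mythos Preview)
Your argument is correct and follows essentially the same route as the paper: both proofs reduce to $n=1$ by iteration, and for $n=1$ both exploit the monotonicity of $f$ to sandwich $f$ between difference quotients of $T(f)$ over a window of width $h$ (the paper's $\delta$), then send the width to zero so that the bounds $(1-e^{-\alpha h})/h$ and $(e^{\alpha h}-1)/h$ both collapse to $\alpha$. One small slip: in your write-up the labels ``left inequality'' and ``right inequality'' are interchanged --- the lower bound $\liminf f(x)/(bx^{\kappa}e^{-\alpha x})\ge (1-e^{-\alpha h})/h$ comes from the right half of the sandwich $T(f)(x)-T(f)(x+h)\le h f(x)$, and the $\limsup$ bound from the left half --- but the displayed conclusions are correct as stated.
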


We defer the proof of this lemma to \app{Proof of Lemma 6.1}.  We
combine this lemma with the classical asymptotic inversion 
  results in \app{Asymptotic inversion}. The idea is very simple. We
  just use the moment generating function $\hat{T}(f)$ of (\ref{eqn:T 1}) or
  $\hat{T}^{2}(f)$ of \eqn{T 2} instead of the moment generating function  $g$.
For this, $g$ of \eqn{g 2} is redefined in the following lemma.

  \begin{lemma}    
\label{lem:handy 1}
  Let
  \begin{equation}
    \label{eq:ginlemmas}
    g(z) = \int_0^\infty e^{ zx} \nu(dx)    
  \end{equation}
 be the moment generating function of a finite measure on $[0,\infty)$. Assume that $g$ satisfies the 
  following two conditions
\begin{mylist}{0}
\item [(\lemt{handy 1}a)] there is a complex variable function
\begin{eqnarray*}
  g_{0}(z) = \frac {b} {(\alpha - z)^{k}} ,
\end{eqnarray*}
for some positive integers $k$, some complex number $b$ and positive
numbers $\alpha, \beta$ satisfying $0 < \alpha < \beta$ such that
$g(z) - g_{0}(z)$ is analytic for $\Re z < \beta$,  

\item [(\lemt{handy 1}b)] there exists an $\epsilon_0>0$ such that  $g(z) -
  g_{0}(z)$ is bounded for all $z \in \dd{C}$ 
   such that $0 \le \Re z \le \beta$ and $|z - \alpha| >
  \epsilon_{0}$.  
\end{mylist}
  Then, we have the following exact tail asymptotic for $\nu$.
\begin{eqnarray}
\label{eqn:handy 1}
  \nu(x, \infty) \sim \frac{b\alpha^{-1}}{\Gamma(k)} x^{k-1} e^{-\alpha
   x} \quad \text{ as 
  } x \to \infty,
\end{eqnarray}
  where, $\Gamma(\lambda)$ is the gamma function as defined in Section 53 of Volume II of \cite{Markushevich77} (see also Theorem 10.14 in its Section 54 for its integral representation), and
  by convention $\frac 1{\Gamma(\lambda)} = 0$ when $\lambda =
  0, -1, -2, \ldots$.
\end{lemma}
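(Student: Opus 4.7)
The plan is to use \lem{tail equivalence 1} as a bridge between an iterated tail of $\nu$, whose moment generating function is more regular than $g$ itself, and the target quantity $\nu(x,\infty)$. The key observation, indicated in the discussion preceding \lem{handy 1}, is that applying the tail transform $T$ trades the raw $g$ for the smoother $\hat{T^{n}}(f)(z)$ of \eqn{T 2}, which is much easier to analyze via the classical asymptotic inversion results in \app{Asymptotic inversion}.

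First I would set $F(x) = \nu(x,\infty)$, a nonnegative nonincreasing function on $[0,\infty)$, which is integrable because $g$ is analytic at the origin and $g'(0) = \int_{0}^{\infty} F(x)\,dx$ is therefore finite. Define $G = T(F)$. By Fubini's theorem, exactly as in the derivations of \eqn{T 1} and \eqn{T 2}, the moment generating function of $G$ is
\begin{displaymath}
\hat{G}(z) \;=\; \frac{g(z) - g(0) - g'(0)\,z}{z^{2}},
\end{displaymath}
which extends analytically across $z = 0$.

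Next, using (\lemt{handy 1}a) to write $g = g_{0} + h$ with $h$ analytic on $\Re z < \beta$, the function $\hat{G}$ is meromorphic on $\Re z < \beta$ with a single pole, located at $z = \alpha$, of order $k$. Taylor-expanding $1/z^{2}$ about $\alpha$ and multiplying by $g_{0}$ shows that the leading term of the principal part of $\hat{G}$ at $\alpha$ equals $\frac{b}{\alpha^{2}(\alpha-z)^{k}}$, accompanied by strictly lower-order polar terms $\frac{c_{j}}{(\alpha-z)^{j}}$ with $1 \le j \le k-1$. Hypothesis (\lemt{handy 1}b), combined with the additional factor $1/z^{2}$ built into $\hat{G}$, provides the regularity of $\hat{G}$ on the strip $\{0 \le \Re z \le \beta\}$ outside any fixed disk around $\alpha$ that is required to invoke \lem{f asymptotics 1} (or \lem{f asymptotics 2}). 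Applying that asymptotic inversion to $\hat{G}$ then yields
\begin{displaymath}
G(x) \;\sim\; \frac{b/\alpha^{2}}{\Gamma(k)}\, x^{k-1} e^{-\alpha x} \qquad (x \to \infty),
\end{displaymath}
since the lower-order polar terms $c_{j}/(\alpha-z)^{j}$ contribute strictly smaller asymptotic terms $x^{j-1}e^{-\alpha x}$ with $j < k$.

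Finally, since $F$ is nonincreasing on $[0,\infty)$ (hence ultimately nonincreasing) and $G = T(F)$, \lem{tail equivalence 1} with $n = 1$ upgrades the asymptotic of $G$ to
\begin{displaymath}
F(x) \;\sim\; \alpha \cdot \frac{b/\alpha^{2}}{\Gamma(k)}\, x^{k-1} e^{-\alpha x} \;=\; \frac{b\,\alpha^{-1}}{\Gamma(k)}\, x^{k-1} e^{-\alpha x},
\end{displaymath}
which is precisely \eqn{handy 1}. The principal obstacle will be the second step: one must check that the extra factor $1/z^{2}$ in $\hat{G}$ is sufficient to convert the merely qualitative boundedness in (\lemt{handy 1}b) into the quantitative decay required by the appendix lemma (such as the sufficient condition \eq{inversion3} along vertical lines in the strip), and one must correctly separate the contribution of the top-order pole from the lower-order polar terms in the principal part at $\alpha$.
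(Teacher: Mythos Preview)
Your plan is correct and follows essentially the same route as the paper: set $f(x)=\nu(x,\infty)$, pass to $T(f)$ whose moment generating function is $z^{-2}\bigl(g(z)-g(0)-g'(0)z\bigr)$, identify the principal part at $z=\alpha$ with leading term $b\alpha^{-2}(\alpha-z)^{-k}$ (the paper does this by explicitly computing $\hat{T^{2}}(f_{0})$ for $f_{0}(x)=\tfrac{b}{\Gamma(k)}x^{k-1}e^{-\alpha x}$ via a telescoping partial-fraction identity, you via Taylor expansion of $1/z^{2}$ at $\alpha$; the two are equivalent), apply \lem{f asymptotics 1} to obtain $T(f)(x)\sim \tfrac{b\alpha^{-2}}{\Gamma(k)}x^{k-1}e^{-\alpha x}$, and then invoke \lem{tail equivalence 1} with $n=1$.

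One refinement worth noting on the obstacle you flag: the extra factor $1/z^{2}$ does \emph{not} quite yield the $|z|^{-1-\delta}$ decay of the sufficient condition \eqn{C1bc sufficient condition}, because the term $-g'(0)z/z^{2}=-g'(0)/z$ decays only like $|z|^{-1}$. The paper therefore does not use \eqn{C1bc sufficient condition}; instead it verifies condition (\appt{Asymptotic inversion}1c) directly by appealing to Doetsch for the uniform convergence (in $x>T$) of $\int_{-\infty}^{\infty} e^{-iyx}(\beta+iy)^{-1}\,dy$. You should plan to do the same.
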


The asymptotic of this lemma is less sharp than that of \lem{f
  asymptotics 1} because the lower order term in \lem{f
  asymptotics 1} has a precise exponential form. However, we do not require for $\nu(x,\infty)$ to be continuous in $x$. Before proving this lemma, we
present  similarly a  version of \lem{f asymptotics 2}. In this case, the
asymptotic results are the same, but we need to restrict $\lambda$ to
be greater than $-1$.

\begin{lemma} 
\label{lem:handy 2}
Assume   $\nu$ and $g$ as in \lem{handy 1}.  Assume that there
  are  some $\alpha > 0$ and some $\delta \in [0,
  \frac {\pi}{2})$ such that  the following two conditions hold for $g$
\begin{mylist}{0}
\item [(\lemt{handy 2}a)] $g(z)$ is analytic on
on $\sr{G}_{\delta}(\alpha) \equiv \{z \in \dd{C}; z \ne \alpha,
|\arg(z-\alpha)| > \delta\}$ and is bounded on
$\sr{G}_{\delta}(\alpha) \cap \{ z \in \dd{C}; |z-\alpha| >
\epsilon_{0} \}$ for some $\epsilon_{0} > 0$, where for
$z\in\dd{C}\setminus \{0\}$,
$\arg(z)\in (-\pi,
\pi)$ is the angle of $z$;
\item [(\lemt{handy 2}b)] 
  for some complex numbers $a, b$ and some real number $\lambda$ satisfying $\lambda \ne 0$ and $\lambda > -1$, 
\begin{eqnarray}
\label{eqn:handy bridge}
  \lim_{ z \to \alpha\atop z\in \sr{G}_{\delta}(\alpha)} (\alpha
  - z)^{\lambda} \bigl(g(z) -a \bigr) = b.
\end{eqnarray}
\end{mylist}
  Then   
\begin{eqnarray}
\label{eqn:handy 2}
  \nu(x, \infty) \sim \frac{b \alpha^{-1}}{\Gamma(\lambda)} x^{\lambda-1} e^{-\alpha x}, \quad
\text{ as } x\to\infty.
\end{eqnarray}
\end{lemma}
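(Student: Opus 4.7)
The plan is to reduce Lemma \ref{lem:handy 2} to Lemma \lemt{f asymptotics 2} (reproduced in the appendix) by the device already used for Lemma \lemt{handy 1}, namely, the equivalence of tails established in Lemma \lemt{tail equivalence 1}. Set $f(x)=\nu(x,\infty)$; this function is nonnegative and globally nonincreasing on $[0,\infty)$, hence ultimately nonincreasing. A short Fubini/integration-by-parts computation gives the moment generating function of $f$ as $(g(z)-g(0))/z$, and iterating \eqn{T 1} yields
\begin{equation*}
\hat{T^n}(f)(z)=\frac{g(z)-Q_n(z)}{z^{n+1}},\qquad n\ge 1,
\end{equation*}
where $Q_n$ is the degree-$n$ Taylor polynomial of $g$ at $z=0$. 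The subtraction cancels the apparent pole of $1/z^{n+1}$ at the origin, so $\hat{T^n}(f)$ is analytic wherever $g$ is, and $T^n(f)$ is continuous for $n\ge 1$.

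The first step is to verify that, for every $n\ge 1$, $\hat{T^n}(f)$ inherits from $g$ both the analyticity on $\sr{G}_\delta(\alpha)$ and the boundedness on $\sr{G}_\delta(\alpha)\cap\{|z-\alpha|>\epsilon_0\}$ required by (\lemt{handy 2}a); this is routine since $\alpha>0$ forces $1/|z|^{n+1}$ to be bounded on every subset of $\sr{G}_\delta(\alpha)$ away from the origin. The second step is to transfer the singular behavior to $\hat{T^n}(f)$. Writing $1/z^{n+1}=1/\alpha^{n+1}+O(z-\alpha)$ near $z=\alpha$ and combining with (\lemt{handy 2}b), a direct computation yields
\begin{equation*}
\lim_{\substack{z\to\alpha\\ z\in\sr{G}_\delta(\alpha)}}(\alpha-z)^\lambda\bigl(\hat{T^n}(f)(z)-\tilde a_n\bigr)=\frac{b}{\alpha^{n+1}}
\end{equation*}
for an explicit constant $\tilde a_n$ built from $a$, $Q_n$, and $\alpha$. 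The correction generated by the $O(z-\alpha)$ factor multiplies with $b(\alpha-z)^{-\lambda}$ to give an $O((\alpha-z)^{1-\lambda})$ term; paired with $(\alpha-z)^\lambda$ this becomes $O(\alpha-z)\to 0$. The hypotheses $\lambda>-1$ and $\lambda\ne 0$ are exactly what guarantee this remainder is strictly of lower order than $(\alpha-z)^{-\lambda}$ and is free of a logarithmic correction.

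Third, choose $n$ large enough that $\hat{T^n}(f)$ also satisfies the vertical-decay hypothesis of Lemma \lemt{f asymptotics 2} along the inversion contour (each additional factor of $1/z$ buys one more power of decay as $|\Im z|\to\infty$). Applying Lemma \lemt{f asymptotics 2} with coefficient $b/\alpha^{n+1}$ yields
\begin{equation*}
T^n(f)(x)\sim\frac{b\alpha^{-(n+1)}}{\Gamma(\lambda)}\,x^{\lambda-1}e^{-\alpha x}\quad\text{as }x\to\infty.
\end{equation*}
Since $f$ is nonincreasing, Lemma \lemt{tail equivalence 1} applies with $\kappa=\lambda-1$ and restores the factor $\alpha^n$ lost through iterated integration, producing
\begin{equation*}
\nu(x,\infty)=f(x)\sim\frac{b\alpha^{-1}}{\Gamma(\lambda)}\,x^{\lambda-1}e^{-\alpha x},
\end{equation*}
which is \eqn{handy 2}.

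The main technical obstacle is the second step: one must cleanly verify that the analytic remainder $O((\alpha-z)^{1-\lambda})$ generated by expanding $1/z^{n+1}$ around $\alpha$ is subsumed into the constant $\tilde a_n$ uniformly as $z\to\alpha$ within the sector $\sr{G}_\delta(\alpha)$, and not just radially. The restrictions $\lambda>-1$ and $\lambda\ne 0$ enter essentially here — weakening either would either introduce a logarithmic factor or put the correction on the same order as the leading singularity, breaking the reduction. The first step is a routine boundedness check and the third is a direct invocation of the asymptotic inversion toolkit already in place.
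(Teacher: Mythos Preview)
Your proposal is correct and follows essentially the same approach as the paper: apply the tail operator to $f(x)=\nu(x,\infty)$, verify that the resulting moment generating function satisfies the hypotheses of \lem{f asymptotics 2}, and then invoke \lem{tail equivalence 1} to transfer the asymptotic back to $f$. The only differences are cosmetic: the paper fixes $n=1$ (giving $\hat T(f)(z)=z^{-2}(g(z)-g(0)-g'(0)z)$, which already decays to zero since $g$ is bounded) and checks condition (\appt{Asymptotic inversion}2c) by an explicit case split on the sign of $\lambda$ with separate choices of the constant $d$, whereas you leave $n$ free and handle the limit uniformly via a Taylor expansion of $z^{-(n+1)}$ around $\alpha$.
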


In the reminder of this section, 
we first prove \thr{boundary measure asymptotics} using Lemmas
\lemt{tail equivalence 1}, \lemt{handy 1} and \lemt{handy 2}. We then
prove the latter two lemmas, leaving the proof of  \lem{tail
  equivalence 1} to \app{Proof of Lemma 6.1}. 

\begin{proof*}{The proof of \thr{boundary measure asymptotics}}
  We will use Lemmas \ref{lem:handy 1} and \ref{lem:handy 2} to prove
  the theorem.  Recall that $\varphi_2(z)$ is the moment generating
  function of boundary measure $\nu_2$ and it will play the role of
  $g(z)$ in these two lemmas. 

 (a) For Categories I and III, we have
  $\tau_{1} = \theta^{(1,\rm r)}_{1}< \theta^{(1,\max)}_1$.  By parts
  (a) and (c) of Lemma 6.6 of \cite{DaiMiyazawa2011a}, all conditions
  of \lem{handy 1} with $k=1$ are satisfied, and hence the lemma
  implies the theorem with $\kappa=0$ for this case.  For Category II, if $\tau_{1} \ne
  \theta^{(1,\rm r)}_{1}$, then $\varphi_{2}(z)$ has a simple pole at
  $z=\tau_{1}$, and all the conditions of \lem{handy 1} with $k=1$ are
  satisfied by parts (b) and (c) of Lemma 6.6 of
  \cite{DaiMiyazawa2011a}. Otherwise, $\varphi_{2}(z)$ has a double
  pole at $z=\tau_{1}$, all the conditions of \lem{handy 1} with $k=2$
  are similarly satisfied. In either case, Lemma \ref{lem:handy 1}
  implies the theorem with $\kappa$ being given in \eqn{boundary
    asymptotic 1}. 
  
  (b) We apply Lemma 6.8 of \cite{DaiMiyazawa2011a} because $\tau_{1} =
  \theta^{(1,\max)}_{1}$ in this case. We use parts (c) and (d) of
  this lemma for Category I and II, respectively. For Category I, it
  is not hard to 
  see that all the conditions of \lem{handy 2}  are satisfied  with
  $\lambda = \frac 
  12$ for $\theta^{(1,\rm r)}_{1} = \theta^{(1,\max)}_{1}$ and
  $\lambda = -\frac 12$ for $\theta^{(1,\rm r)}_{1} \not=
  \theta^{(1,\max)}_{1}$. Therefore the theorem is proved in this case 
with $\kappa$ being given in
  \eqn{boundary asymptotic 2}. Similarly, for Category
  II, all conditions of \lem{handy 2} are satisfied
  with $\lambda = 1$ for $\theta^{(1,\rm r)}_{1} =
  \theta^{(1,\max)}_{1}$ and $\lambda = \frac 12$ for $\theta^{(1,\rm
    r)}_{1} \not= \theta^{(1,\max)}_{1}$. Thus, the lemma implies the
  theorem with $\kappa$ being given by  \eqn{boundary
    asymptotic 3}.  
\end{proof*}

\begin{proof*}{The proof of \lem{handy 1}}
Let
  \begin{equation}
    \label{eq:fnu2}
    f(x)=\nu(x, \infty)  \quad \text{ for } x \ge 0.
  \end{equation}
  We apply \lem{f asymptotics 1} to $T(f)$. Let 
\begin{eqnarray*}
 &&  f_{0}(x) = \frac{b}{\Gamma(k)} x^{k-1} e^{-\alpha x} \qquad x \ge
 0, \\ 
&& g_0(z) = \frac{b}{(\alpha-z)^k}, \qquad z\in \dd{C}\setminus\{\alpha\}.
\end{eqnarray*}
It is clear that the moment generating function
of $f_0$ is $g_0$.  Since $g_{0}(0) = b \alpha^{-k}$, we have, from \eqn{T 1},
\begin{eqnarray*}
  \hat{T}(f_{0})(z) = z^{-1} \left( g_{0}(z) - g_{0}(0) \right) = \frac {b} {\alpha^{k+1}} \sum_{\ell=1}^{k} \frac {\alpha^{\ell}} {(\alpha - z)^{\ell}}
\end{eqnarray*}
This and $g_{0}'(0) = k b \alpha^{-(k+1)}$ yield
\begin{eqnarray*}
  \hat{T^{2}}(f_{0})(z) &=& z^{-2} \left( g_{0}(z) - g_{0}(0) - z g'(0)\right) \\
  &=& z^{-1} \left( \hat{T}(f_{0}) - g'(0)\right) \\
  &=& z^{-1} \frac {b} {\alpha^{k+1}} \sum_{\ell=1}^{k} \left( \frac {\alpha^{\ell}} {(\alpha - z)^{\ell}} - 1 \right) \\
  &=& \frac {b} {\alpha^{k+2}} \sum_{\ell=1}^{k} \sum_{m=1}^{\ell} \frac {\alpha^{m}} {(\alpha - z)^{m}}\\
  &=& \frac {b} {\alpha^{k+2}} \sum_{m=1}^{k} (k+1-m) \frac {\alpha^{m}} {(\alpha - z)^{m}}\\
  &=& \frac {b} {\alpha^{2}} \frac {1} {(\alpha - z)^{k}} + \frac {b} {\alpha^{k+2}} \sum_{m=1}^{k-1} (k+1-m) \frac {\alpha^{m}} {(\alpha - z)^{m}}.
\end{eqnarray*}
  We check (\appt{Asymptotic inversion}1a), (\appt{Asymptotic
    inversion}1b) and (\appt{Asymptotic inversion}1c) of \lem{f
    asymptotics 1} for $\hat T(f)$ and $\hat{T^{2}}(f_{0})$. Note that the continuity of $f$ of \lem{f asymptotics 1} is satisfied because $T(f)(x)$ is continuous in $x$ for $f$ of \eq{fnu2}.
(When \lem{f
    asymptotics 1} is applied, $\hat T(f)$ and $\hat{T^{2}}(f_{0})$ serve the roles of $g$ and
 $g_0$, respectively, in that lemma.) It is
  clear that 
\begin{eqnarray*}
  \hat{T}^{2}(f)(z)- \hat{T^{2}}(f_{0})(z) = z^{-2} \left( g(z) - g_{0}(z) - (g(0) - g_{0}(0)) - z (g'(0) - g_{0}'(0) )\right)
\end{eqnarray*}
  has a removable singularity at $z=0$, and is analytic for $\Re z \le
  \beta$ because of condition (\lemt{handy 1}a). Thus (\appt{Asymptotic inversion}1a) is
  satisfied. By (\lemt{handy 1}b) and \eqn{T 2}, (\appt{Asymptotic inversion}1b)
  is also satisfied. It remains to check (\appt{Asymptotic
    inversion}1c). From \eqn{C1bc sufficient condition} and  \eqn{T
    2}, we only need to check that 
\begin{eqnarray*}
  \int_{-\infty}^{+\infty} e^{- i y x} \frac 1{\beta+i y} dy
\end{eqnarray*}
  uniformly converges for $x > T$ for some $T > 0$, but this is already proved on page 237 of \cite{Doetsch74}. Hence, \lem{f asymptotics 1} yields
\begin{eqnarray*}
  T(f)(x) = \frac {b} {\alpha^{2}} \frac {1} {\Gamma(k)} x^{k-1} e^{-\alpha x} + \frac {b} {\alpha^{k+2}} \sum_{m=1}^{k-1} (k+1-m) \frac {\alpha^{m}} {\Gamma(m)} x^{m-1} e^{-\alpha x} + o(e^{-\beta x})
\end{eqnarray*}
  Since $k$ is a positive integer, this implies
\begin{eqnarray*}
  T(f)(x) \sim \frac {b} {\alpha^{2}} \frac {1} {\Gamma(k)}
  x^{k-1} e^{-\alpha x}, \quad \text{ as } x \to \infty. 
\end{eqnarray*}
Applying \lem{tail equivalence 1} with $n=1$, we conclude \eqn{handy
  1} and thus prove the lemma.
\end{proof*}

\bigskip

\begin{proof*}{The proof of \lem{handy 2}}
Let $f$ be defined in \eq{fnu2}.
We first apply \lem{f asymptotics 2} to $T(f)$ to obtain
  the exact tail asymptotic for $T(f)$.
 Because of \eq{fnu2}, we have
\begin{eqnarray}
\label{eqn:Tf 2}
 \hat{T}(f)(z) = z^{-2} \bigl(g(z) - g(0) - g'(0)z\bigr)
\end{eqnarray}
Obviously, this $\hat{T}(f)(z)$ satisfies the conditions (\appt{Asymptotic inversion}2a) and (\appt{Asymptotic inversion}2b) by (\lemt{handy 2}a) of this lemma. Thus, we only need to verify (\appt{Asymptotic inversion}2c).

We consider three cases separately, depending on $\lambda > 0$, $\lambda < 0$ or $\lambda=0$. We first assume that $\lambda > 0$. In this case, we put $d=0$, then we have, by \eqn{handy bridge} and \eqn{Tf 2},
\begin{eqnarray*} 
  \lefteqn{\lim_{ z \to \alpha\atop z\in \sr{G}_{\delta}(\alpha)} (\alpha -
    z)^{\lambda} \hat{T}(f)(z)}\\
    && = \lim_{ z \to \alpha\atop z\in \sr{G}_{\delta}(\alpha)} z^{-2} (\alpha - z)^{\lambda} \left( g(z) - a + a - g(0) - g'(0) z \right) \\
    &&= \lim_{ z \to \alpha\atop z\in
      \sr{G}_{\delta}(\alpha)} z^{-2} (\alpha - z)^{\lambda} \bigl(
    g(z) - a \bigr) = b \alpha^{-2}. 
\end{eqnarray*}
  We next assume that $-1 < \lambda < 0$. Then, we must have, by \eqn{handy bridge},
\begin{eqnarray*}
  a = g(\alpha).
\end{eqnarray*}
and therefore it follows from \eqn{Tf 2} that
\begin{eqnarray*}
  \lefteqn{\hat{T}(f)(z) - \hat{T}(f)(\alpha) = z^{-2} \bigl(g(z) - g(0) - g'(0)z\bigr) - \alpha^{-2} \bigl(g(\alpha) - g(0) - g'(0) \alpha\bigr)} \hspace{10ex}\\
  && = z^{-2} \bigl(g(z) - a \bigr) + \frac {(\alpha-z)(\alpha+z)}{z^{2} \alpha^{2}} \bigl(g(\alpha) - g(0)\bigr) - \frac {\alpha-z}{z \alpha} g'(0).
\end{eqnarray*}
Hence, letting $d = \hat{T}(f)(\alpha)$, we have, by \eqn{handy bridge} and the fact that $\lambda > -1$,
\begin{eqnarray*}
\lim_{ z \to \alpha\atop z\in \sr{G}_{\delta}(\alpha)} (\alpha - z)^{\lambda} \bigl( \hat{T}(f)(z) - d \bigr) = b \alpha^{-2},
\end{eqnarray*}
We finally assume that $\lambda = 0$. In this case, letting $d = \alpha^{-2} (a - g(0) - g'(0) \alpha)$, \eqn{handy bridge} and \eqn{Tf 2} yield
\begin{eqnarray*}
  \lim_{ z \to \alpha\atop z\in \sr{G}_{\delta}(\alpha)} \bigl( \hat{T}(f)(z) - d \bigr) = \alpha^{-2} (b + a - g(0) - g'(0) \alpha) - d = b \alpha^{-2}.
\end{eqnarray*}
Thus, \eqn{bridge case 1} is verified for $\hat{T}(f)$ and $c_{0} = b \alpha^{-2}$ in all cases.
Hence, by \lem{f asymptotics 2} 
  \begin{equation}
    \label{eq:lemma2Tfasym}
  T(f)(x) \sim \frac {b} {\alpha^{2}} \frac {1} {\Gamma(\lambda)}
  x^{\lambda-1} e^{-\alpha x}, \quad \text{ as } x \to \infty. 
  \end{equation}
(When \lem{f
    asymptotics 2} is applied, $\hat T(f)$  serves the role of $g$ in
  that lemma.)  
Finally,  (\ref{eqn:handy 2}) and thus the lemma follows from Lemma
\ref{lem:tail 
  equivalence 1} and (\ref{eq:lemma2Tfasym}).
\end{proof*}


\section*{Acknowledgements}  

This research was supported in part by NSF grants  CMMI-0825840,
CMMI-1030589, CNS-1248117,  and  by Japan Society for the Promotion of Science under
grant No.\ 24310115. 

\bigskip

\appendix

\noindent {\bf \Large Appendix}

\section{Proof of \lem{tail equivalence 1}}
\label{app:Proof of Lemma 6.1}
\setnewcounter

  We only prove the claim for $n=1$ because the case for $n \ge 2$ is iteratively obtained. Since $f$ is ultimately nonincreasing, we can find $x_{0} \ge 0$ such that $f(x)$ is nonincreasing for $x \ge x_{0}$. By the assumption, we have, for any $\epsilon > 0$, there exists some $x_{1} > x_{0}$ such that
\begin{eqnarray}
\label{eqn:asymptotic bound 1}
  (1-\epsilon) b x^{\kappa} e^{-\alpha x} < T(f)(x) = \int_{x}^{\infty} f(u) du < (1+\epsilon) b x^{\kappa} e^{-\alpha x}, \qquad \forall x \ge x_{1}.
\end{eqnarray}
  Hence, for each $\delta > 0$ and each $x \ge x_{1} + \delta$,
\begin{eqnarray*}
  \int_{x - \delta}^{x} f(u) du &<& (1+\epsilon) b (x-\delta)^{\kappa} e^{-\alpha (x-\delta)} - (1-\epsilon) b x^{\kappa} e^{-\alpha x}\\
  &=& \left(\left(\frac {x-\delta}{x}\right)^{\kappa} e^{\alpha \delta} - 1\right) b x^{\kappa} e^{-\alpha x} + \epsilon  \left( \left(\frac {x-\delta}{x}\right)^{\kappa} e^{\alpha \delta} + 1 \right) b x^{\kappa} e^{-\alpha x}
\end{eqnarray*}
  Since $f(u)$ is nonincreasing in $u$, we have
\begin{eqnarray*}
  \delta \limsup_{x \to \infty} \frac {f(x)} {x^{\kappa} e^{-\alpha x}} \le ( e^{\alpha \delta} - 1) b + \epsilon ( e^{\alpha \delta} + 1) b.
\end{eqnarray*}
  Since $\epsilon$ can be arbitrarily small, we obtain
\begin{eqnarray*}
  \limsup_{x \to \infty} \frac {f(x)} {x^{\kappa} e^{-\alpha x}} \le \frac 1{\delta} (e^{\alpha \delta} - 1) b.
\end{eqnarray*}
  Letting $\delta$ go to zero, we arrive at
\begin{eqnarray}
\label{eqn:upper bound}
  \limsup_{x \to \infty} \frac {f(x)} {x^{\kappa} e^{-\alpha x}} \le \alpha b.
\end{eqnarray}
  Similarly, for each $\delta > 0$ and each $x \ge x_{1}$,
\begin{eqnarray*}
  \int_{x}^{x+\delta} f(u) du &>& (1-\epsilon) b x^{\kappa} e^{-\alpha x} - (1+\epsilon) b (x+\delta)^{\kappa} e^{-\alpha (x+\delta)}\\
  &=& \left(1 - \left(\frac {x+\delta}{x}\right)^{\kappa} e^{-\alpha \delta} \right) b x^{\kappa} e^{-\alpha x} - \epsilon  \left( 1 + \left(\frac {x+\delta}{x}\right)^{\kappa} e^{-\alpha \delta} \right) b x^{\kappa} e^{-\alpha x},
\end{eqnarray*}
which  yields
\begin{eqnarray}
\label{eqn:lower bound}
  \liminf_{x \to \infty} \frac {f(x)} {x^{\kappa} e^{-\alpha x}} \ge \alpha b.
\end{eqnarray}
  Hence, \eqn{upper bound} and \eqn{lower bound} conclude $f(x) \sim \alpha b \, x^{\kappa} e^{-\alpha x}$.
  
\section{Asymptotic inversion lemmas}
\label{app:Asymptotic inversion}
\setnewcounter

In this appendix, we state Lemmas C.1 and C.2 in
\cite{DaiMiyazawa2011a}. Actually, we here present a refined version of
Lemma C.1, whereas Lemma C.2 is unchanged. This refinement is closer to
Theorems 35.1 of Doetsch \cite{Doetsch74}. The reason for modifying
Lemma C.1 is clear in the proof of Lemma \ref{lem:handy 1} in
\sectn{Exact asymptotics}. 
 For two functions $h_1$ and $h_2$, we say $h_1=o(h_2)$ as
  $x\to\infty$ if 
  \begin{displaymath}
    \lim_{x\to\infty} h_1(x)/h_2(x)=0.
  \end{displaymath}

\begin{lemma} 
\label{lem:f asymptotics 1}
  Let $g$ be the moment generating function in (\ref{eq:mgfg}) of a
  nonnegative, continuous and integrable function $f$. Assume the
  following conditions are satisfied for some integer $m \ge 1$:
\begin{mylist}{0}
\item [(\appt{Asymptotic inversion}1a)] there is a complex variable
  function 
\begin{eqnarray*}
  g_{0}(z) = \sum_{j=1}^{m} \sum_{\ell =1}^{k_{j}} \frac
  {c^{(j)}_{\ell}} {(\alpha_{j} - z)^{\ell}} 
\end{eqnarray*}
 for some positive integers $k_{j}$ and
  some  positive numbers $\beta, \alpha_{j}, c^{(j)}_{\ell}$ for
  $\ell=1, \ldots, k_j$ and $j= 1,\ldots m$
  with $0 < \alpha_{1} < \ldots < \alpha_{m-1} < \alpha_{m} <
  \beta$ such that  $g(z) - g_{0}(z)$ is analytic for $\Re z < \beta$,

\item [(\appt{Asymptotic inversion}1b)] $g(z)$ uniformly converges to
  $0$ as $z \to \infty$ for $0 \le \Re z \le \beta$, 

\item [(\appt{Asymptotic inversion}1c)]  for some $T > 0$, $\int_{-\infty}^{+\infty} e^{- i y x} g(\beta+iy) dy$ uniformly converges for $x > T$.
\end{mylist}
Then
\begin{eqnarray}
\label{eqn:f asymptotics 1}
  f(x) = \sum_{j=1}^{m} \sum_{\ell =1}^{k_{j}} \frac{c^{(j)}_{\ell}}{\Gamma(\ell)} x^{\ell-1} e^{-\alpha_{j} x} + o(e^{-\beta x}), \qquad x \to \infty.
\end{eqnarray}

\end{lemma}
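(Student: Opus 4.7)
The plan is to invoke the Bromwich complex-inversion formula for $g$ and then push the inversion contour across the poles $\alpha_1,\ldots,\alpha_m$ of $g_0$ all the way to the vertical line $\Re z=\beta$. The residues picked up along the way will produce the closed-form terms in \eqn{f asymptotics 1}, while the surviving integral on $\Re z=\beta$ will be controlled to be $o(e^{-\beta x})$. By~(\appt{Asymptotic inversion}1a) the only singularities of $g$ crossed during this shift are exactly the prescribed poles of $g_0$, so the decomposition is clean.

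Concretely, continuity of $f$ together with Bromwich's theorem gives, for any $\sigma<\alpha_1$,
\begin{equation*}
  f(x) \;=\; \frac{1}{2\pi i}\int_{\sigma-i\infty}^{\sigma+i\infty} e^{-zx} g(z)\, dz, \qquad x>0.
\end{equation*}
I would then apply Cauchy's theorem on the rectangle with vertical sides $\Re z=\sigma$ and $\Re z=\beta$ and horizontal sides $\Im z=\pm R$; hypothesis~(\appt{Asymptotic inversion}1b) forces the two horizontal contributions to vanish as $R\to\infty$, leaving
\begin{equation*}
  f(x) \;=\; -\sum_{j=1}^{m}\mathrm{Res}_{z=\alpha_j}\bigl(e^{-zx}g(z)\bigr) \;+\; \frac{1}{2\pi i}\int_{\beta-i\infty}^{\beta+i\infty} e^{-zx} g(z)\, dz.
\end{equation*}
Because $g-g_0$ is analytic across $\Re z=\beta$, these residues depend only on $g_0$. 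Writing $(\alpha_j-z)^{-\ell}=(-1)^\ell(z-\alpha_j)^{-\ell}$ and differentiating $e^{-zx}$ at $z=\alpha_j$ yields $-\mathrm{Res}_{z=\alpha_j}\bigl(e^{-zx}c_\ell^{(j)}(\alpha_j-z)^{-\ell}\bigr)=\frac{c_\ell^{(j)}}{\Gamma(\ell)}\,x^{\ell-1}e^{-\alpha_j x}$, reproducing each main term.

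For the remainder, parametrising $z=\beta+iy$ rewrites it as
\begin{equation*}
  \frac{1}{2\pi i}\int_{\beta-i\infty}^{\beta+i\infty} e^{-zx} g(z)\, dz \;=\; \frac{e^{-\beta x}}{2\pi}\int_{-\infty}^{+\infty} e^{-iyx} g(\beta+iy)\, dy.
\end{equation*}
Hypothesis~(\appt{Asymptotic inversion}1c) states exactly that the inner integral converges uniformly in $x>T$. Splitting the integration at $|y|=Y$, bounding the two tails uniformly in $x$ via (\appt{Asymptotic inversion}1c) with the help of~(\appt{Asymptotic inversion}1b), and invoking the Riemann--Lebesgue lemma on the compact middle piece, the inner integral tends to $0$ as $x\to\infty$; hence the remainder is $o(e^{-\beta x})$, which together with the residue sum gives~\eqn{f asymptotics 1}.

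The main obstacle is this last decay argument: $g(\beta+iy)$ need not lie in $L^{1}(\dd{R})$ as a function of $y$, so Riemann--Lebesgue cannot be applied directly to the whole real line, and the residual integral is only a conditionally convergent oscillatory integral. Hypothesis~(\appt{Asymptotic inversion}1c) is designed precisely to replace $L^{1}$ decay by uniform convergence, in the spirit of Doetsch's Theorem~35.1, and its interaction with the boundedness from~(\appt{Asymptotic inversion}1b) is the one step that requires careful execution; everything else reduces to the routine residue calculation indicated above.
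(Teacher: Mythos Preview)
Your proposal is correct and is precisely the classical contour-shift argument underlying Doetsch's Theorem~35.1, which is exactly what the paper appeals to: the paper does not supply its own proof of this lemma but simply states it in the appendix as a refinement of Lemma~C.1 of \cite{DaiMiyazawa2011a}, noting that it is ``closer to Theorems~35.1 of Doetsch~\cite{Doetsch74}.'' So there is no alternative proof to compare against; your Bromwich-inversion-plus-residue computation, with the horizontal segments killed by~(\appt{Asymptotic inversion}1b) and the remaining $\Re z=\beta$ integral shown to be $o(1)$ via the uniform-convergence hypothesis~(\appt{Asymptotic inversion}1c) combined with Riemann--Lebesgue on the bounded piece, \emph{is} the argument the paper is citing.

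One small caution worth flagging for a clean write-up: the pointwise Bromwich inversion you invoke at the outset generally needs a little more than bare continuity of $f$ (Doetsch's versions assume local bounded variation or similar); in practice you can sidestep this by performing the rectangle shift at finite height $R$ first and only then letting $R\to\infty$, using~(\appt{Asymptotic inversion}1c) to guarantee convergence of the $\beta$-line integral, so that the equality with $f(x)$ follows from the same inversion theorem Doetsch establishes earlier in his book. Everything else---the residue identity $-\mathrm{Res}_{z=\alpha_j}\bigl(e^{-zx}c_\ell^{(j)}(\alpha_j-z)^{-\ell}\bigr)=\frac{c_\ell^{(j)}}{\Gamma(\ell)}x^{\ell-1}e^{-\alpha_j x}$ and the tail-splitting argument you describe---is exactly right.
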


  \begin{remark}
 Conditions (\appt{Asymptotic inversion}1b) and
  (\appt{Asymptotic inversion}1c) are satisfied if, for some
  constants  $a, b,
  \delta > 0$,  
\begin{eqnarray}
\label{eqn:C1bc sufficient condition}
  |g(z)| < \frac {a}{|z|^{1+\delta}}, \quad \text{ for } \Re z \in [0,
  \beta] \text{ and } |\Im z| > b.
\end{eqnarray}    
  \end{remark}

\begin{lemma} {\rm
\label{lem:f asymptotics 2} 
 Let $f$ and $g$ be two functions in \lem{f asymptotics 1}. Assume
 that the
 following two conditions hold for some $\alpha > 0$ and some $\delta 
 \in [0, \frac {\pi}{2})$:  
\begin{mylist}{0}
\item [(\appt{Asymptotic inversion}2a)]  $g(z)$ is analytic on
  $\sr{G}_{\delta}(\alpha) \equiv \{z \in \dd{C}; z \ne \alpha,
  |\arg(z-\alpha)| > \delta\}$, where for $z\in\dd{C}\setminus \{0\}$,
  $\arg(z)$ is the angle of $z$,

\item [(\appt{Asymptotic inversion}2b)] 
  $g(z) \to 0$ as $|z| \to \infty$ for $z \in
  \sr{G}_{\delta}(\alpha)$,

\item [(\appt{Asymptotic inversion}2c)] 
  for some complex number $d$, some $\lambda\in \dd{R}$ and $c_{0} \in \dd{R}$, 
\begin{eqnarray}
\label{eqn:bridge case 1}
  \lim_{ z \to \alpha\atop z\in \sr{G}_{\delta}(\alpha)} (\alpha
  - z)^{\lambda} \bigl(g(z) - d \bigr) = c_{0}.
\end{eqnarray}
\end{mylist}
  Then   
\begin{eqnarray}
\label{eqn:f asymptotics 2}
f(x)=\frac{c_{0}}{\Gamma(\lambda)} x^{\lambda-1} e^{-\alpha x}(1+o(1)) .
\end{eqnarray}
}\end{lemma}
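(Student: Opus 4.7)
The plan is to derive the asymptotic by combining the Bromwich inversion formula with a Hankel-contour/Darboux analysis of the singularity of $g$ at $z=\alpha$. Since $f$ is nonnegative, continuous and integrable on $[0,\infty)$, the Laplace inversion formula yields
\begin{equation*}
f(x) = \frac{1}{2\pi i} \int_{\beta - i\infty}^{\beta+i\infty} e^{-zx} g(z)\,dz, \qquad x>0,
\end{equation*}
for any $\beta < \alpha$ lying to the left of all singularities of $g$. The first step is to deform this vertical contour, using the analyticity in condition (\appt{Asymptotic inversion}2a) and the decay in (\appt{Asymptotic inversion}2b), into a Hankel-type contour $H\subset\sr{G}_{\delta}(\alpha)$ that wraps around $\alpha$ while staying outside the singular sector. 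Concretely, $H$ can be taken to come in from infinity along a ray of argument $-\delta'$ (for some $\delta<\delta'<\pi/2$) measured from $\alpha$, make a small loop around $\alpha$, and return to infinity along the ray of argument $+\delta'$; condition (\appt{Asymptotic inversion}2b) guarantees that the large arcs used to justify this deformation contribute nothing.

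The key identity is the Hankel integral representation of the reciprocal Gamma function, which after the change of variables $t=(\alpha-z)x$ gives
\begin{equation*}
\frac{1}{2\pi i}\int_{H} e^{-zx} (\alpha-z)^{-\lambda}\,dz \;=\; \frac{x^{\lambda-1}\,e^{-\alpha x}}{\Gamma(\lambda)},
\end{equation*}
valid for every real $\lambda$ with the convention $1/\Gamma(\lambda)=0$ at non-positive integers. I would then decompose $g(z) = c_0(\alpha-z)^{-\lambda} + d + h(z)$, where condition (\appt{Asymptotic inversion}2c) is precisely the statement that $(\alpha-z)^{\lambda} h(z)\to 0$ as $z\to\alpha$ within $\sr{G}_{\delta}(\alpha)$. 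The three resulting contour integrals over $H$ are then handled separately: the $c_0(\alpha-z)^{-\lambda}$ piece yields the principal term $c_0\,x^{\lambda-1} e^{-\alpha x}/\Gamma(\lambda)$ by the identity above; the constant $d$ contributes $0$, since $d\,e^{-zx}$ is analytic inside $H$ and the resulting closed contour integral vanishes for $x>0$; and the remainder $h(z)$ should contribute $o\bigl(x^{\lambda-1}e^{-\alpha x}\bigr)$.

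I expect the remainder estimate to be the principal obstacle, and my plan is to split $H$ into a local arc of diameter $O(1/x)$ around $\alpha$ and a complementary outer portion. On the local arc, the substitution $t=(\alpha-z)x$ turns the integrand into $e^{-t}t^{-\lambda}\varepsilon_x(t)$, where $\varepsilon_x(t):=(\alpha-z)^{\lambda}h(z)\big|_{z=\alpha-t/x}$ tends to $0$ uniformly on compact sets as $x\to\infty$ by (\appt{Asymptotic inversion}2c); a dominated convergence argument then produces the required $o$-bound. On the outer portion, condition (\appt{Asymptotic inversion}2b) bounds $h$ uniformly while $\Re(z)\ge \alpha+c/x$ for some explicit $c>0$, so that the integrand is majorized by a constant multiple of $e^{-(\alpha+c)x}$, which is negligible compared with $x^{\lambda-1}e^{-\alpha x}$. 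Combining all three pieces delivers the claimed asymptotic $f(x) = (c_0/\Gamma(\lambda))\,x^{\lambda-1} e^{-\alpha x}(1+o(1))$.
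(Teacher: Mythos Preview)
The paper does not supply a proof here; Appendix~\ref{app:Asymptotic inversion} merely restates this as Lemma~C.2 of \cite{DaiMiyazawa2011a}, itself an adaptation of Theorem~37.1 in Doetsch~\cite{Doetsch74}. Your Bromwich-inversion/Hankel-contour outline is exactly the method behind that theorem, so the overall strategy is the intended one and there is no in-paper argument to compare against.

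There is, however, a genuine slip in your outer-portion estimate. If the local arc has radius of order $1/x$, then on the outer rays one has only $\Re z\ge\alpha+c/x$, so $\lvert e^{-zx}\rvert\le e^{-c}\,e^{-\alpha x}$, not $e^{-(\alpha+c)x}$ as you wrote. Integrating a bounded $\lvert h\rvert$ against $e^{-rx\cos\delta'}$ along the ray then produces at best $O(x^{-1}e^{-\alpha x})$, which is $o(x^{\lambda-1}e^{-\alpha x})$ only when $\lambda>0$; but the lemma is stated for all real $\lambda$, and via \lem{handy 2} it is invoked in the paper with $\lambda=-\tfrac12$. The repair is to carry the substitution $t=(\alpha-z)x$ over the \emph{whole} Hankel contour rather than just the local arc, so that the full integral of $e^{-zx}\bigl(g(z)-d\bigr)$ becomes $x^{\lambda-1}e^{-\alpha x}$ times an integral, over a fixed $t$-Hankel contour, of $e^{t}t^{-\lambda}\cdot\bigl[(\alpha-z)^{\lambda}(g(z)-d)\bigr]_{z=\alpha-t/x}$. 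Condition~(\appt{Asymptotic inversion}2c) then gives the pointwise limit $c_{0}$, while (\appt{Asymptotic inversion}2b) is what one uses to build a $t$-integrable majorant for dominated convergence; the details split according to the sign of $\lambda$, exactly as in Doetsch's proof. Once this is fixed your argument goes through.
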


\bibliographystyle{ims}
\bibliography{dai-2}

\end{document}